\theoremstyle{definition}
\newtheorem{definition}{Definition}
\newtheorem{remark}{Remark}
\newtheorem{convention}{Convention}
\newtheorem{proposition}{Proposition}
\newtheorem{claim}{Claim}
\theoremstyle{plain}
\newtheorem{theorem}{Theorem}
\newtheorem{lemma}{Lemma}
\newtheorem{corollary}{Corollary}
\newcommand{\upK}{^{\mbox{\normalsize $\underset{\scriptstyle n}{\times}K$}}}
\newcommand{\upKnmt}{^{\mbox{\normalsize $\underset{\scriptstyle n-2}{\times}K$}}}
\newcommand{\upU}{^{\mbox{\normalsize $\underset{\scriptstyle n}{\times}U$}}}
\newcommand{\mysym}[1]{{\adjustbox{valign = c}{\includegraphics[]{Figures/sym#1.pdf}}}}
\newcommand{\defw}[1]{{\bf{\textit{#1}}}}
\newcommand{\ordst}{\textsuperscript{st}\,}
\newcommand{\ordth}{\textsuperscript{th}\,}
\newcommand{\ZZ}{{\mathbb{Z}}}
\newcommand{\QQ}{{\mathbb{Q}}}
\newcommand{\RR}{{\mathbb{R}}}
\newcommand{\CC}{{\mathbb{C}}}
\newcommand{\id}{\textnormal{id}}
\newcommand{\cobIV}{\mathcal{C}ob^4}
\newcommand{\cobIVd}{\mathcal{C}ob^{4\bullet}}
\newcommand{\cobIII}{\mathcal{C}hron\mathcal{C}ob^3_{Odd}}
\newcommand{\cobIIId}{\mathcal{C}hron\mathcal{C}ob^{3\bullet}_{Odd}}
\newcommand{\cobIIIM}{\textnormal{Mat}(\mathcal{C}hron\mathcal{C}ob^3_{Odd})}
\newcommand{\cobIIIKM}{\textnormal{Kom}(\textnormal{Mat}(\mathcal{C}hron\mathcal{C}ob^3_{Odd}))}
\newcommand{\cobIIIdKM}{\textnormal{Kom}(\textnormal{Mat}(\mathcal{C}hron\mathcal{C}ob^{3\bullet}_{Odd}))}
\newcommand{\cobIIIKbM}{K^b(\textnormal{Mat}(\mathcal{C}hron\mathcal{C}ob^3_{Odd}))}
\newcommand{\cobIIIKbMpm}{K^b_\pm(\textnormal{Mat}(\mathcal{C}hron\mathcal{C}ob^3_{Odd}))}
\newcommand{\okh}{{\textnormal{Okh}}}
\newcommand{\ff}{{\mathcal{F}}}
\newcommand{\edge}{{\text{edge}}}
\newcommand{\arc}{{\text{arcs}}}
\newcommand{\col}{{\text{Col}}}
\newcommand{\dott}{{\text{dot}}}
\newcommand{\bdbc}{{\Sigma_2}}
\newcommand{\includeMov}[1]{\adjustbox{valign = c}{\includegraphics[scale=0.0375]{Figures/#1.pdf}}}
\newcommand{\includeFig}[1]{\adjustbox{valign = c}{\includegraphics[scale=1]{Figures/#1.pdf}}}
\newcommand{\includeCob}[1]{\adjustbox{valign = c}{\includegraphics[scale=1.25]{Figures/#1.pdf}}}
\newcommand{\includeCobEq}[1]{\adjustbox{valign = c}{\includegraphics[scale=1]{Figures/#1.pdf}}}
\newcommand{\includeTang}[1]{\adjustbox{valign = c}{\includegraphics[scale=0.1]{Figures/#1.pdf}}}
\newcolumntype{C}[1]{>{\centering\let\newline\\\arraybackslash\hspace{0pt}}m{#1}}
\title{The Functoriality of Odd Khovanov Homology up to Sign and Applications}
\author{Jacob Migdail}
\date{21 March 2024}
\newif\ifthesis
\titlespacing*{\section}{0pt}{0.5em}{0pt}
\titlespacing*{\subsection}{0pt}{0.5em}{0pt}
\titlespacing*{\subsubsection}{0pt}{0.5em}{0pt}
\titlespacing*{\paragraph}{0pt}{0.5em}{0pt}
\begin{document}

\pagenumbering{gobble}
\section*{Abstract}
Odd Khovanov Homology is a homological invariant of knots and links that permits a Bar-Natan category presentation.  In this dissertation, we extend the odd Khovanov bracket to link cobordisms and prove that our construction is functorial up to sign.  We then build an odd Khovanov theory for dotted link cobordisms.  Out of the dotted theory, a module structure on the odd Khovanov homology of a diagram over the exterior algebra of the diagram's coloring group arises.  We finish by using our functoriality result to prove that if $n$ is even or if the knot has even framing, then the odd Khovanov homology of the $n$-cable of a knot admits an action of the Hecke algebra $\mathcal{H}(q^2,n)$ at $q=i$.
\newpage
\begin{center}
\phantom{,}
\vspace{0.125in}

THE FUNCTORIALITY OF ODD KHOVANOV HOMOLOGY UP TO SIGN AND APPLICATIONS\\
\vfill

by\\
Jacob A. Migdail-Smith\\
\vfill

B.S. Rutgers University, 2018\\
M.S. Syracuse University, 2021\\
\vfill

Dissertation\\
Submitted in partial fulfillment of the requirements for\\ the degree of Doctor of Philosophy in \textit{Mathematics}.\\
\vfill
Syracuse University\\
May 2024
\vfill
\end{center}
\newpage
\phantom.\vspace{1in}\begin{center}Copyright\,\textcopyright\,Jacob A. Migdail-Smith 2024\\All Rights Reserved\end{center}
\newpage
\pagenumbering{roman}
\setcounter{page}{4}
\ifthesis
\phantom{.}
\vfill
\section*{Acknowlegements}
First, I would like to thank my advisor, Dr.~Stephan Wehrli, for his friendship, mentorship, and the immense time and effort he has committed to me and our research together.  He has always ensured that I felt more like a young colleague rather than an older student.

I would also like to acknowledge the contributions of all the faculty who have taught me over the past six years, the department chairs and graduate chairs who recruited me and have worked closely with me, and the department staff who have been immensely helpful at every turn.

I owe my passion for mathematics to my father, Michael Migdail-Smith, the first mathematician in my life, and my confidence that I could succeed in this endeavor to my parents and siblings, who have always expressed certainty that I would achieve this milestone.  To the doctors Howard and Denise Johnson, I am immensely grateful that you have ensured that during these years I have felt close to family, even when my parents and siblings were far away.

Finally, to my fianc\'ee Magen Mack, I began this process before I met you, but I cannot imagine having been able to finish it without you.  I could not have asked for a more patient and kind editor, nor could I have expected you to fill that role.   You have been an incredible source of stability, love, and encouragement over the past three years.

\hfill\textit{Jacob Migdail-Smith}

\hfill\textit{21 March 2024}

\vfill

\newpage
\tableofcontents
\newpage
\listoffigures
\newpage
\fi
\pagenumbering{arabic}

\section*{Abstract}
Odd Khovanov Homology is a homological invariant of knots and links that permits a Bar-Natan category presentation.  In this dissertation, we extend the odd Khovanov bracket to link cobordisms and prove that our construction is functorial up to sign.  We then build an odd Khovanov theory for dotted link cobordisms.  Out of the dotted theory, a module structure on the odd Khovanov homology of a diagram over the exterior algebra of the diagram's coloring group arises.  We finish by using our functoriality result to prove that if $n$ is even or if the knot has even framing, then the odd Khovanov homology of the $n$-cable of a knot admits an action of the Hecke algebra $\mathcal{H}(q^2,n)$ at $q=i$.

\newpage

\section{Introduction}

The Jones polynomial is a powerful invariant of links in its own right, and it has continued to impact knot theory today through its categorifications.  In his seminal paper \cite{Kh1999}, Khovanov categorified the Jones polynomial---with what would come to be known as Khovanov homology---using a construction built implicitly over the truncated symmetric algebra.  Shortly after, in \cite{Ja2004}, Jacobsson proved Khovanov's conjecture that Khovanov homology is a functor of link cobordisms up to sign.  Bar-Natan built upon this in \cite{Bn2005} by providing an alternative formulation of Khovanov homology that maps to a category of complexes of planar diagrams with cobordisms for morphisms. He then showed that his alternative formulation is not just an invariant but is also functorial up to sign.  Khovanov homology was later truly realized as a functor by multiple authors (\cite{Ca2007}, \cite{CMW2009}, \cite{Bl2010}, \cite{BHPW2019}, \cite{Vo2020}, \cite{Sa2021}) who addressed the indeterminacy up to sign in various ways.

The functoriality of Khovanov homology proved very useful, even before the sign issues were resolved.  One of the first results that followed was a purely combinatorial proof of the topological Milnor conjecture by Rasmussen \cite{Ra2004}.  Later, Piccirillo used functoriality to prove that the Conway knot is not slice \cite{Pi2020}.  More recently, Hayden and Sundberg \cite{HS2022} showed that Khovanov homology can distinguish smooth surfaces that are topologically---but not smoothly---ambient isotopic, providing a way to find exotically knotted surfaces. 

Parallel to these developments, an alternative categorification of the Jones polynomial was developed by Ozsv\'ath, Rasmussen, and Szab\'o \cite{ORS2007} using the exterior algebra.  This construction is known as odd Khovanov homology (and when disambiguation is required Khovanov's original construction is referred to as even Khovanov homology).  While the two constructions produce identical complexes over coefficients in $\mathbb{Z}_2$, computations by Shumakovitch \cite{Sh2011} showed that---for rational coefficients---the even and odd theories each have pairs of links they can distinguish that the other cannot.  Odd Khovanov homology differs (in practice) from even Khovanov homology in that, to build a valid differential, one has to first construct what is known as a valid sign assignment.  A Bar-Natan category formulation of odd Khovanov homology was later developed by Putyra as part of a larger general theory that can be specialized to either even or odd Khovanov homology.

There are two overall types of sign assignments: type X and type Y. Lemma 2.4 of \cite{ORS2007} states that both types of assignments produce isomorphic odd Khovanov complexes. Practically this result indicates that odd Khovanov homology is not a branched pair of distinct invariants and that researchers can work with either type X or type Y sign assignments. In \cite{Pu2015}, it is pointed out that the author and Cotton Seed observed a hole in the proof of \cite{ORS2007}, and the author provided an alternative proof.  The author of this dissertation believes that there is also a flaw in the proof provided in \cite{Pu2015}, our first major result is \textbf{Proposition \ref{prop:3:XYinv}} which is a corrected proof of Ozsv\'ath, Rasmussen, and Szab\'o's Lemma 2.4.

The main theorem of this dissertation is that odd Khovanov homology is a functor up to sign.  We will start with Putyra's general construction, specialized to odd Khovanov homology, and then define chain maps induced by each of the elementary link cobordisms.  To show that this construction is functorial, we will begin (as in \cite{Bn2005}) by showing that each of Carter and Saito's \cite{CS1997} fifteen movie moves induce chain maps that are homotopic up to an overall sign.  Unlike in the even setting, the odd setting uses chronological planar cobordisms. So we must check that ambient isotopies of link cobordisms---which correspond to changing the order of distant Morse theoretic saddle points---also induce chain maps homotopic up to an overall sign.

In \cite{Bn2005}, Bar-Natan introduced dotted versions of both his source and target categories that are compatible with even Khovanov homology.  In the odd setting, Putyra introduced a dotted version of his target category \cite{Pu2015} and Manion introduced dots as a portion of the differential.  We build an enhanced dotted linked cobordism category and prove \textbf{Corollary \ref{thr:5:dotFunctor}}, that there is a functor up to sign from this dotted link cobordism category to Putyra's dotted target category.  We can then prove our second most important result, \textbf{Theorem \ref{thm:5:colMod}}, that the odd Khovanov homology of a diagram is a module over the exterior algebra of the coloring group of the link diagram.

The functoriality of even Khovanov homology has also been pivotal in answering questions about the structure of even Khovanov homology itself.   The functoriality of even Khovanov homology was used by Grigsby, Licata, and Wehrli \cite{GLW2017} to show that the even Khovanov homology of the $n$-cable of a knot admits an action of the symmetric group $\mathcal{S}_n$, arising from exchanging adjacent strands of the cable.  In this dissertation, we prove an analogous result, \textbf{Theorem \ref{thm:6:HeckeAction}}, that if $n$ is even or if the knot has even framing, then the odd Khovanov homology of the $n$-cable of a knot admits an action of the Hecke algebra of type $\mathcal{A}_{n-1}$ at $q=i$.

We plan to study the following applications of our primary results in future publications.  First, we expect to prove an analog of Levine and Zemke's result in \cite{LZ2019} that ribbon disks induce injective maps from the even Khovanov homology of a knot to that of the unknot.  In the odd setting, ribbon concordances induce injective maps when using rational coefficients. If one is using integer coefficients, then ribbon concordances induce almost injective maps. Meaning, that if a certain geometrically defined odd number $d$ were invertible, then the induced map would be injective.  We also expect to prove that our Hecke algebra action extends to a functor on the odd Temperley–Lieb supercategory at $\delta=0$.  This would allow us to compute invariants of twist-spun knots among other 2-knots, for which we can prove our conjecture that the odd Khovanov Jacobsson number of a dotted 2-knot corresponds to the order of the first homology of the branched double cover of the 4-sphere branched along the given 2-knot.

The remainder of this dissertation is organized in the following manner.  
In \textbf{Section 2} we recall the initial link cobordism category and the terminal planar cobordism category relevant to defining the odd Khovanov homology functor.  
In \textbf{Section 3} we recall the construction of odd Khovanov homology as an invariant, as well as define the chain maps that extend odd Khovanov homology from an invariant into a functor.  
In \textbf{Section 4} we prove the main theorem of this dissertation, the functoriality of odd Khovanov homology up to sign.  
In \textbf{Section 5} we recall the dotted planar cobordism setting, construct a category of dotted link cobordisms that is compatible with the odd Khovanov homology functor, and prove a structural result about odd Khovanov homology.  
In \textbf{Section 6} we prove that the odd Khovanov homology of the $n$-cable of a knot admits the aforementioned Hecke algebra action. 
In \textbf{Appendix A} all the relations and conventions in the planar cobordism category are enumerated together.  Additionally, the commutativity and associativity relations are presented as they would appear in the odd Khovanov differential.  If the reader is not familiar with odd Khovanov homology they are encouraged to use this section as a quick reference guide when reading proofs.  
In \textbf{Appendix B} we provide the work for specific variations of the four-tube relation that otherwise could have been left to the reader.

\section{Cobordisms}
Before we define the functor odd Khovanov homology, we must pin down the initial category that odd Khovanov homology will map from, and the terminal category it will map to.  Our functor will map from the category of link cobordisms to a category comprised of chain complexes of $\mathbb{Z}$-linear combinations of chronological-oriented-planar cobordisms.  We will begin by defining cobordisms in general, then build the particular categories involved, and finally explain what equivalence means in each category.  Our focus will be on building useful definitions of the categories that are sufficient to construct our functor.  Many of the motivations and explanations for these particular settings will be omitted (the reader is encouraged to reference \cite{Pu2015} for details about the particular planar cobordism setting as well as \cite{Bn2005} and \cite{CS1997} for details about the link cobordism setting).
\subsection{Planar Cobordisms}
The objects of the planar cobordism category are planar diagrams, and the morphisms are planar cobordisms.
\begin{definition}
	A \defw{planar diagram} is a disjoint union of (at most) finitely many smoothly embedded unoriented circles in $\RR^2$.
\end{definition}
\begin{definition}
    A \defw{planar diagram cobordism} or \defw{planar cobordism} is a smooth unoriented compact surface with boundary embedded in $\RR^2\times [0,1]$ such that its boundary lies in $\RR^2\times\{0,1\}$.
\end{definition}
The embedding of a cobordism naturally endows it with a height function by projection onto $[0,1]$.  Planar cobordisms are considered to be equivalent if they are ambient isotopic rel boundary.
\begin{definition}
    The \defw{i-frame} is the embedding of the height $i$ part of the cobordism in the plane.  
\end{definition}
Without loss of generality, we can impose the condition that the projection $\RR^2\times\RR\rightarrow\RR$ restricts to a Morse function on the cobordism, and that all critical points occur at different heights. Concretely, this means that all but finitely many frames of the cobordism are planar diagrams, and that frames that fail to be planar diagrams fail as a result of containing---along with circles---a single embedded point or figure-eight.
    \begin{definition}
    Frames that fail the regularity condition are called \defw{critical frames}.\footnote{If the height function were thought of as a Morse function, critical frames would be the heights at which critical points exist.}
    \end{definition}
\subsubsection{Movies}
    We can think of planar cobordisms as an evolution from the 0 frame to the 1 frame.  In this context, the height function is better regarded as a function of time, and the frames (like the frames of a movie) provide us with an alternative presentation of cobordisms.
\begin{definition}
    A \defw{planar movie} is a sequence of selected frames from a planar cobordism in which chosen frames are separated by at most one critical frame. Furthermore, each critical frame contains either a single embedded point or a single embedded figure-eight.
\end{definition}
When drawing movies, we will use the following conventions:
\begin{convention}
	Movies may depict tangles or only a local image of a diagram.  In such cases, the reader should assume all other diagram components are fixed from one frame of a movie to the next.
\end{convention}
\begin{convention}
	For cobordisms, the front is the surface \enquote{closest} to the reader.  Similarly, for movies, the front is the bottom of the frame, as in Figure \ref{fig:movieDirections}.
\end{convention}
\begin{figure}[H]
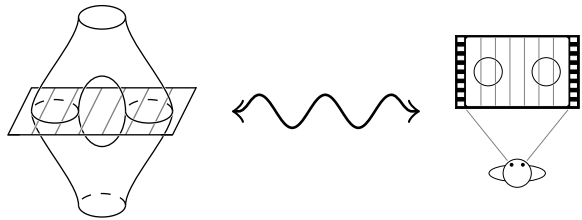

\centering
    \includeCobEq{S02F001}
    \caption{On the left is a cobordism with a distinguished frame.  On the right is you---the reader---looking at the cobordism on the left, now presented as a movie featuring the same distinguished frame}
    \label{fig:movieDirections}
\end{figure}
\begin{convention}
	Movies and cobordisms should be read from the bottom to the top of the page.  The diagram at the bottom is initial with respect to time, and the diagram at the top is final or terminal.
\end{convention}
\subsubsection{Elementary Planar Cobordisms}
    For planar movies, there are a few basic cobordisms of particular interest.  In the following cobordism definitions, assume that all circles not mentioned are assigned an identity cobordism or are fixed.
\begin{definition}
	A \defw{birth} is a cobordism with a critical frame containing an embedded point that expands into a circle in the following frames.
\end{definition}
\begin{definition}
	A \defw{death} is a cobordism with a circle that contracts to a point in the critical frame, then disappears in the frames afterward.
\end{definition}
\begin{figure}[H]
\begin{minipage}{0.5\textwidth}
    \centering
    \includeCobEq{S02F002}
    \caption{A birth cobordism}
    \label{fig:birth}
\end{minipage}%
\begin{minipage}{0.5\textwidth}
    \centering
    \includeCobEq{S02F003}
    \caption{A death cobordism}
    \label{fig:death}
\end{minipage}
\end{figure}
\begin{definition}
	A \defw{merge} is a cobordism where two circles extend toward one another until they meet at a single point where they form a figure eight in the critical frame.  Then, the interiors of the circles push through the point to merge leaving a single circle.
\end{definition}

\begin{figure}[H]
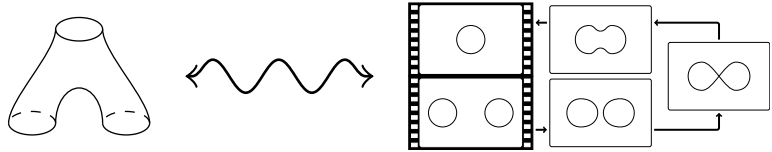

    \centering
    \captionsetup{width=6in}
    \includeCobEq{S02F004}
    \caption{A merge cobordism}
    \label{fig:merge}
\end{figure}
\begin{definition}
	A \defw{split} is a cobordism where the edges of a single circle pinch towards one another until they meet at a single point, where they form a figure eight in the critical frame.  Then, the two curves break away from one another forming two circles.
\end{definition}
\begin{figure}[H]
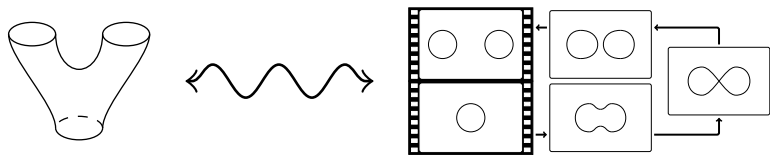

    \centering
    \includeCobEq{S02F005}
    \caption{A split cobordism}
    \label{fig:split}
\end{figure}
It should be clear that the death movie is the birth movie played in reverse, and that the split movie is the merge movie played in reverse.  Births and splits increment the number of circles, while deaths and merges decrement the number of circles. All other movies maintain the number of circles in a diagram.  
\begin{definition}
    A \defw{planar ambient isotopy cobordism} is a cobordism that contains no critical frames.
\end{definition}
Cobordisms which are planar ambient isotopies arise from moving circles or deforming the shape of a circle.  An example of a planar ambient isotopy appears in Figure \ref{fig:permutationCobordism}.
\begin{figure}[H]
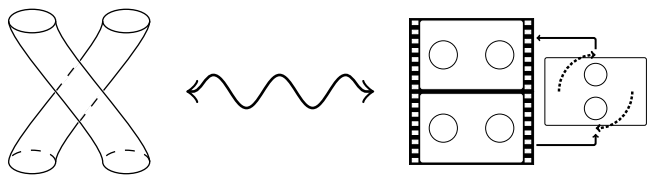

    \centering
    \includeCobEq{S02F006}
    \caption{A cobordism \enquote{braiding} two circles}
    \label{fig:permutationCobordism}
\end{figure}
Planar diagrams form a category with (ambient isotopy classes of) planar cobordisms as morphisms. The composition in this category is defined as follows: if the initial diagram of a cobordism $A$ is the same as the terminal diagram of another cobordism $B$, then the composition $A\circ B$ is obtained by stacking the cobordism $A$ on top of $B$ or, in the movie setting, splicing the movie of $A$ at the end of the movie of $B$ (see Figure~\ref{fig:cobordismComposition}).  In situations where it will not cause ambiguity, we will use multiplicative notation and the composition symbol \enquote{$\circ$} will be omitted.
\begin{figure}[H]
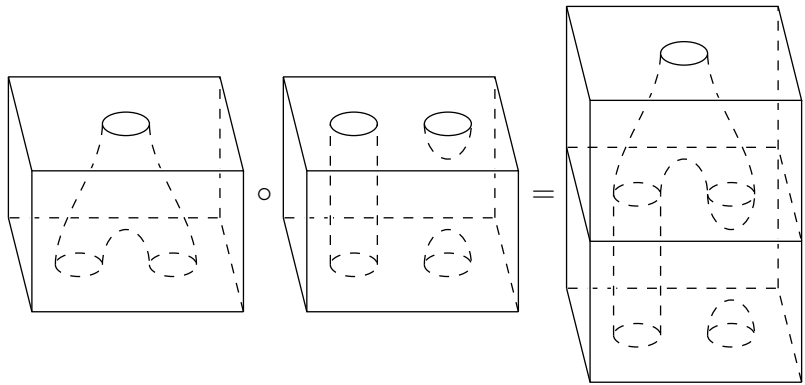

    \centering
    \captionsetup{width=4.25in}
    \includeCobEq{S02F007}
    \caption{The composition of cobordisms $A$ and $B$, the left and right operands respectively, yields the cobordism $A\circ B$}
    \label{fig:cobordismComposition}
\end{figure}

All planar cobordisms can be decomposed into compositions of the four elementary planar cobordisms and planar ambient isotopy cobordisms.
\subsection{Link Cobordisms}
Link cobordisms are similar to planar cobordisms in that they consist of surfaces with one-dimensional boundaries, except that they are embedded in four-dimensional space rather than three-dimensional space.  The objects of a link cobordism category are links, and the morphisms are link cobordisms.

\begin{definition}
    A \defw{link} is a disjoint union of (at most) finitely many smoothly embedded circles in $\RR^3$.
\end{definition}

A link consisting of only one embedded circle is called a knot.  Throughout this dissertation, we will assume that links are oriented, in the following sense:

\begin{definition}
    An \defw{orientation} on a link is a choice of an orientation on each of the copies of $S^1$ that compose the link.
\end{definition}
We can impose some regularity conditions on links. Namely, that projection along the $z$-axis onto the plane $\RR^2$ is injective at all but finitely many places, and at most two distinct points of the link project down to any point in $\RR^2$.  Under these conditions, we can talk about link diagrams rather than the actual embeddings.
\begin{definition}
    A \defw{link diagram} is an immersion of a link $L\subset\RR^3$ into the plane $\RR^2$ created via projection along the $z$-axis, producing a 4-valent planar graph with a distinguished pair of edges at each vertex corresponding with the strand that was on top prior to projection.
\end{definition}
Links are considered equivalent if they are ambient isotopic. In turn, it is known \cite{Kr1927} that link diagrams represent equivalent links if and only if they differ by a finite sequence of planar ambient isotopies and the three Reidemeister moves described below.

Oriented links allow us to easily assign a direction to crossings (although it can still be defined for unoriented links using a variation on the right-hand rule).  This convention is shown in Figure \ref{fig:2:crossingDirection}

\begin{figure}[H]
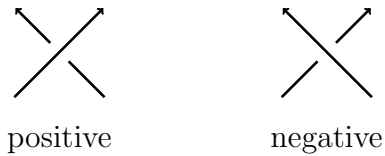

    \centering
    \includeFig{S02F025}
    \caption{Positive and negative crossings for directed links}
    \label{fig:2:crossingDirection}
\end{figure}

\begin{definition}
    A \defw{link cobordism} is a compact oriented surface smoothly embedded in $\RR^3\times[0,1]$ such that its boundary lies in $\RR^3\times\{0,1\}$.
\end{definition}

In particular, a link cobordism between two links $L_0$ and $L_1$ is a link cobordism $S$ with
\begin{equation}
    \partial S=(-L_0\times\{0\})\cup(L_1\times\{1\})
\end{equation}
where $-L_0$ is the link $L_0$ with reversed orientation.

The notion of a height function and frames introduced for planar cobordisms naturally apply in the link cobordism setting as well.  This allows us to apply some regularity conditions and define movies for link cobordisms as well.  We require all but finitely many of the frames of our link cobordisms to be links, more specifically, links that satisfy the regularity conditions such that their projections are planar diagrams.  

\subsubsection{Elementary Link Cobordisms}

In terms of movies, this gives us six elementary link cobordisms. 

\begin{itemize}
    \item \textbf{Birth}: the creation of a new component
    \item \textbf{Death}: the deletion of an existing component
    \item \textbf{Saddle}: the merging or splitting of two link components
    \item \textbf{Reidemeister I}: the \enquote{twist} move on a strand
    \item \textbf{Reidemeister II}: the \enquote{poke} move on two strands
    \item \textbf{Reidemeister III}: the \enquote{slide} move between a strand and a crossing
\end{itemize}

As in the planar setting, we have the birth and death cobordisms where an unknotted circle either expands from or contracts to a point.  We can also have saddle cobordisms, and, in the oriented setting, these cobordisms always either merge two link components into one or split a link component into two. On the level of link diagrams, a saddle may either merge two components or split a component into two, or it may happen within a component. Here, the word component refers to the underlying four-valent graph.  As in the planar setting, we also have planar ambient isotopy cobordisms. These are the cobordisms that contain none of the six elementary link cobordisms enumerated above.

In addition to the cobordisms that transferred from the planar setting, we also have cobordisms that arise from each of the three Reidemeister moves.

\begin{figure}[H]
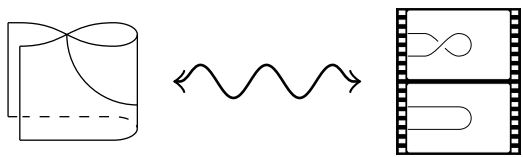

    \centering
    \includeCobEq{S02F020}
    \caption{The cobordism arising from a Reidemeister I move}
    \label{fig:RI}
\end{figure}

\begin{figure}[H]
    \centering
    \includeCobEq{S02F021}
    \caption{The cobordism arising from a Reidemeister II move}
    \label{fig:RII}
\end{figure}

\begin{figure}[H]
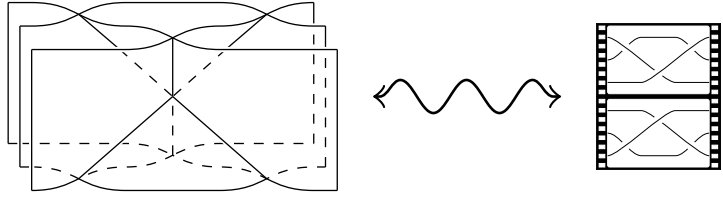

    \centering
    \includeCobEq{S02F022}
    \caption{The cobordism arising from a Reidemeister III move}
    \label{fig:RIII}
\end{figure}

\subsection{The Source Category $\cobIV$}

We now have the pieces in place to define the source category of our odd Khovanov functor: the link cobordism category $\cobIV$.  The objects of $\cobIV$ are oriented link diagrams, and the morphisms are link cobordisms compliant with the regularity conditions.  Two such cobordisms are equivalent as morphisms in $\cobIV$ if there exists a smooth ambient isotopy between them.  As with links, this is a difficult notion of equivalence to work with. Carter and Saito (\cite{CS1991},\cite{CS1993}) translated this equivalence into the movie setting.  They developed a set of moves---which function like Reidemeister's moves---but for link cobordisms.

\subsubsection{Movie Equivalence in the Source Category}

In terms of movies, replacing a cobordism with an ambient isotopic one is a splicing operation, reminiscent of editing a movie in the cutting room.

\begin{definition}
    A \defw{movie move} is a pair of link cobordism movies with the condition that if one of the sides of the movie move is present in a movie, the operation of cutting out that side and splicing in the other side of the movie move in the first side's place produces an equivalent movie.
\end{definition}

The following is a type of movie move that will be of particular interest in this dissertation.

\begin{definition}
    A \defw{chronological movie move} is a movie move generated by changing the order of distant critical points in a cobordism.
\end{definition}

Carter and Saito's main result can be summarized in the following manner.
\begin{theorem}
Two movies represent smoothly ambient isotopic link cobordisms if and only if they differ by a finite sequence of chronological movie moves, planar ambient isotopy cobordisms\footnote{In \cite{CS1997} planar ambient isotopy cobordisms are implicit, we explicitly state these as we must also consider if our functor respects these ambient isotopies.}, or the fifteen particular movie moves\footnote{Our collection of movie moves is that in \cite{Bn2005}.  These movie moves differ from \cite{CS1997} in movie moves 6, 7, 8, and 10 where the strip is composed of one side of the move in \cite{CS1997} played forward, and the other side played in reverse.  Showing that our construction respects this collection of movie moves is sufficient to prove functoriality.} depicted in Figures \ref{fig:4:TIMM}, \ref{fig:4:TIIMM}, \ref{fig:4:MM11}, \ref{fig:4:MM12}, \ref{fig:4:MM13}, \ref{fig:4:MM14}, and \ref{fig:4:MM15}.
\end{theorem}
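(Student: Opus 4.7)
The plan is to follow Carter and Saito's original strategy, which is essentially a singularity-theoretic / Cerf-theoretic analysis of one-parameter families of cobordisms. The ``if'' direction is immediate from the definition of a movie move: each listed move, each chronological reordering of distant critical points, and each planar ambient isotopy is, by construction, a local replacement that preserves the smooth ambient isotopy class of the cobordism a movie represents. Splicing finitely many such local replacements into a movie therefore yields a movie of an ambient isotopic cobordism.

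For the ``only if'' direction, the plan is as follows. First, given two smoothly ambient isotopic cobordisms $S_0,S_1\subset\RR^3\times[0,1]$, choose a smooth isotopy $\{S_t\}_{t\in[0,1]}$ between them. By a transversality / general position argument applied to the two-parameter map $(t,z)\mapsto z$ restricted to $S_t$, one may perturb the isotopy so that for all but finitely many $t$ the height function on $S_t$ is Morse with distinct critical values and the projection of each $S_t$ to $\RR^2$ is a generic immersion; this produces a ``movie of movies.'' Second, between two consecutive exceptional parameter values of $t$ the movies of $S_t$ are clearly related by planar ambient isotopy and by chronological reordering of distant critical points (since no critical point is created, destroyed, or passed through another). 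Third, at each exceptional value of $t$ one identifies the codimension-one bifurcation that occurs and matches it to an entry in the list: bifurcations of the immersed diagram projections yield the Reidemeister-type events and produce movie moves 1--10, while bifurcations of the Morse structure on the surface (birth/death pairs of saddles, handle-slide exchanges, etc.) yield moves 11--15.

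The main obstacle is the completeness of the list of fifteen moves. The substance of Carter and Saito's argument is a careful enumeration of all possible codimension-one degenerations in the relevant jet space and a demonstration that every such degeneration, after possibly composing with chronological moves and planar isotopies, can be realized as one of the listed local models. I would rely on \cite{CS1991,CS1993,CS1997} for this enumeration, and additionally verify that the variant list used here (following Bar-Natan \cite{Bn2005}, where moves 6, 7, 8, and 10 are presented as strips with one side played forward and the other in reverse) generates the same equivalence relation as Carter and Saito's original list; this is a routine check, since replacing a side of such a movie move by a reversed cobordism and appending the corresponding death/split to complete a strip is itself a composition of the other moves.

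Finally, one has to justify the explicit inclusion of planar ambient isotopy cobordisms in the generating set, which \cite{CS1997} leaves implicit. Because our target category $\cobIV$ only identifies morphisms that are smoothly ambient isotopic rel boundary, planar ambient isotopies of the intermediate frames are automatically among the allowed replacements; making them explicit here costs nothing, and it will be needed later since our odd Khovanov functor must be checked separately against these planar moves.
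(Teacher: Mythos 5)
The paper does not actually prove this theorem; it states it as a summary of Carter and Saito's result (\cite{CS1991}, \cite{CS1993}, \cite{CS1997}) reformulated following Bar-Natan \cite{Bn2005}, and relies on those sources for the proof. Your proposal correctly identifies the Cerf-theoretic / singularity-theory strategy behind the original argument and appropriately defers the substantive enumeration of codimension-one bifurcations to the literature, so in spirit it is consistent with the paper's treatment. Two small points are worth tightening. First, between consecutive exceptional parameter values the intermediate movies are related only by planar ambient isotopies; the chronological reordering of two distant critical points is itself a codimension-one event (two critical heights crossing) and therefore belongs in your third step alongside the other bifurcations, not in your second. Second, your explanation of why the Bar-Natan variants of moves 6, 7, 8, and 10 generate the same equivalence relation as Carter and Saito's originals is garbled---the phrase \enquote{appending the corresponding death/split} does not apply, since those moves involve only Reidemeister events and no births, deaths, or saddles. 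The actual reason is that the type I movie moves make each Reidemeister movie invertible up to the generated equivalence, so the Carter--Saito form (movie $M_1$ is equivalent to movie $M_2$) and the Bar-Natan strip form ($M_1$ followed by $M_2$ played in reverse is equivalent to the identity movie) are interconvertible by pre- or postcomposition.
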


For Carter and Saito's fifteen particular movie moves, the first ten movie moves are \enquote{do nothing moves} in that the given strip is equivalent to the identity cobordism from the first to the final frames.  The final five moves are non-reversible, and must be considered as separate movie moves in the forward and backward directions.  Like Reidemeister moves, where there are multiple versions of some of the moves associated with changing particular crossings, the movie moves also have multiple variants when applicable. In \cite{Bn2005}, the fifteen movie moves are divided into three subgroups, each with five members titled type I, type II, and type III movie moves. We will follow this convention.  The type I movie moves correspond to cobordisms generated by doing and undoing the same Reidemeister move. The type III movie moves are the only ones that involve births or deaths. 

\subsection{The Target Category}

In order to define the target category for our functor, we will first have to introduce some additional structures which can be placed on a planar cobordism category.  Our target category is the category constructed in \cite{Pu2015} specialized to the odd setting.\footnote{In \cite{Pu2015} the category for planar diagrams is general purpose and has variables $X$, $Y$, and $Z$, which can be specified to yield the requisite category for either even or odd Khovanov homology.  We are using Putyra's category with $X$ and $Z$ set to 1 and $Y$ set to $-1$ for the odd Khovanov bracket.}

\subsubsection{Oriented Planar Cobordisms}
	We can build a refined collection of planar cobordisms by placing orientations of deaths and saddle cobordisms.
	\begin{definition}
		An \defw{orientation} on a planar death cobordism is a choice of orientation on the manifold in a neighborhood of the death.
	\end{definition}
	A death can be oriented clockwise or counterclockwise.
    \begin{figure}[H]
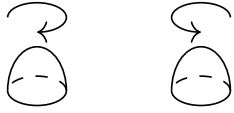

    \centering
    \captionsetup{width=4in}
    \includeCob{S02F009}
    \caption{Clockwise and counterclockwise oriented death cobordisms on the left and right respectively}
    \label{fig:orientedDeath}
    \end{figure}
    
Consider a subset of a neighborhood of a saddle point, the subset being those portions of the neighborhood at or below the saddle point. The set resembles a bowtie with two regions separated by a single point.
    \begin{definition}
		An \defw{orientation} on a saddle cobordism is a choice of path from one of its lower regions to the other passing through the saddle point.
	\end{definition}
    \begin{figure}[H]
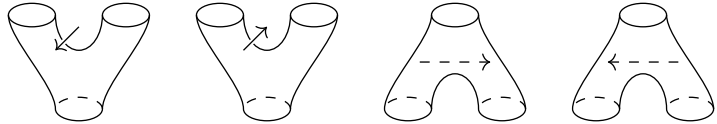

    \centering
    \captionsetup{width=5.5in}
    \includeCob{S02F010}
    \caption{From left to right: the forward-oriented split, the backward-oriented split, the right-oriented merge, and the left-oriented merge}
    \label{fig:orientedSplit}
    \end{figure}
In more intrinsic terms, orientations of deaths and saddles can be viewed as orientations of the descending manifolds of the relevant critical points of the height function. While merges have orientations, they are rarely relevant in our setting and will often be omitted.  To simplify pictures of cobordisms, we will often also omit the orientations of the splits and deaths. In such cases, we will use the following convention:
    \begin{convention}\label{conv:2:orientations}
        The default orientations are clockwise on deaths, forward on splits, and right on merges.
    \end{convention}
\subsubsection{Chronological Cobordisms}
Planar cobordisms gain additional structure in the form of a chronology, which keeps track of the order in which events occur. This is a structure that would normally be ignored under the usual notion of cobordism equivalence.
	\begin{definition}
		The order of critical points in a planar cobordism is called a \defw{chronology}.
	\end{definition}
	\begin{definition}
A cobordism category is \defw{chronological} if equivalence does not include ambient isotopies that change the chronology.
	\end{definition}
\begin{figure}[H]
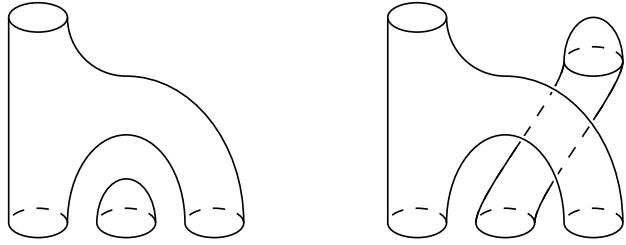

    \centering
    \includeCob{S02F011}
    \caption{Ambient isotopic cobordisms that differ by a change in chronology}
    \label{fig:chronologyChange}
\end{figure}
{\noindent}In the setting considered in the remainder of this dissertation, the most a change in chronology may incur is multiplication by -1.\footnote{In Putyra's general cobordism category from \cite{Pu2015}, changes in chronology can incur multiplication by $X$, $Y$, $Z$, or $Z^{-1}$.}
\subsubsection{The Odd Khovanov Homology Planar Cobordism Category}
We now have all the machinery in place to define the particular planar cobordism category $\cobIII$ that we will use to build the target category of our functor.\footnote{While $\cobIII$ is defined in \cite{Pu2015}, we are going to use notation more consistent with \cite{Bn2005}.  It is named in this way as it is a category of chronological cobordisms in three dimensions, subject to the specific set of local relations for odd Khovanov homology.}  The objects of this category are planar diagrams, and the morphisms are formal $\ZZ$-linear combinations of planar cobordisms considered up to the relations described below and up to planar isotopies preserving the chronology.  Here we assume that the critical points in these cobordisms occur at different heights, and that all deaths and saddles are equipped with orientations.  The composition of two morphisms is defined by extending the vertical stacking operation from Figure \ref{fig:cobordismComposition} bilinearly to formal linear combinations of cobordisms.  In terms of generators, $\cobIII$ is a preadditive category generated by planar ambient isotopy cobordisms, birth cobordisms, clockwise death cobordisms, counterclockwise death cobordisms, merge cobordisms, and oriented split cobordisms.
\begin{figure}[H]
    \centering
    \includeCob{S02F012}
    \caption{The cobordisms that generate $\cobIII$}
    \label{fig:generatingCobordisms}
\end{figure}

\begin{remark}
    The cobordisms in this dissertation are always embedded.  In \cite{Pu2015} the author works with non-embedded cobordisms except for a brief period where it is strictly necessary that he work with embedded cobordisms.  Working with non-embedded cobordisms simplifies the presentation of relations, in that one does not need to concern themselves with all the ways circles could be embedded in relation to one another.  For the following section, and in Appendix \ref{apdx:asscomm}, the non-embedded versions of relations are presented but should be interpreted as providing the relation to all of the ways circles could be embedded around one another.  For example, a split could be drawn such that the two resulting circles are either lying next to one another in the plane, or concentric.
\end{remark}

While merges have orientations---which play a role in Putyra's more general setting---we only consider them up to the relation that reversing the orientation of a merge leaves the cobordism unchanged.  In contrast, reversing the orientation on a death or a split incurs multiplication by $-1$, yielding the following relations.

\vspace{1em}{\noindent}\begin{minipage}{0.5\textwidth}
    \begin{equation}
        \label{eq:deathSignChange}
        \includeCobEq{S02F008}
    \end{equation}
\end{minipage}%
\begin{minipage}{0.5\textwidth}
    \begin{equation}
        \label{eq:splitSignChange}
        \includeCobEq{S02F013}
    \end{equation}
\end{minipage}\vspace{1em}

We also impose the relations in equation \eqref{eq:annihilationCreation} to cancel births and deaths composed with merges and splits.  If we think of these cobordisms (contracted onto some 1-skeleton) from a graph theoretic perspective, these relations would amount to \enquote{pruning} or removing leaves from the graph.
\begin{equation} 
    \label{eq:annihilationCreation}
    \includeCobEq{S02F014}
\end{equation}
For the purposes of tracking chronology changes, our basic cobordisms can be classified as either even or odd.\footnote{These designations arise from the exterior algebra.}  Births and merges are even, while deaths and splits are odd.  If a change in chronology exchanges the order of an even cobordism with any other cobordism, both cobordisms are equal.  When two odd cobordisms are chronologically rearranged, multiplication by $-1$ is incurred.  From this convention, there emerges a pair of exceptional arrangements which, a priori, are neither commuting nor anticommuting as they are annihilated by 2.

\begin{figure}[H]
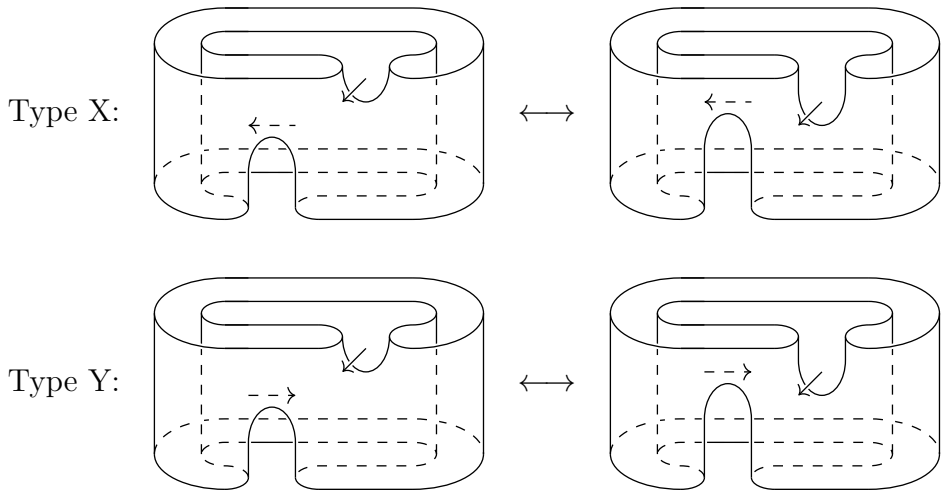

    \centering
    \includeFig{S02F024}
    \caption{Cobordisms of type X and type Y configurations}
    \label{fig:xytype}
\end{figure}

In order to define an odd Khovanov homology theory, one must artificially decide that one of these configurations commutes and that the other anticommutes.  These two different odd Khovanov homology theories are called type X or type Y based on which pair of cobordisms in Figure \ref{fig:xytype} anticommutes.  It was first observed in \cite{ORS2007} that either choice produces the same final invariant, although their proof was incorrect.  Putyra attempted in \cite{Pu2015} to provide a corrected proof, but his proof also appears to be incorrect.  This is Proposition \ref{prop:3:XYinv}, which we will prove in Section 3.  We will use type Y sign assignments following Putyra in \cite{Pu2015}.  

For a full list of commutativity and associativity relations in our specific target category refer to Appendix \ref{apdx:asscomm}. 

\begin{remark}
     When we refer to relations being associative and commutative, we mean in terms of the odd Frobenius algebra.  When looking at actual cobordisms, almost all of the relations are commutativity relations in a sense, as they relate to some commutation in time of saddles or other elementary cobordisms.
\end{remark}

One particular consequence of the relations described above is the double handle\footnote{In the conventional non-mathematical sense} (2H) relation.  If a cobordism has genus one or higher, and one of its holes can be isolated as a split followed by an immediate merge, that whole region can be rotated one-half turn and the orientation on the split can be reversed to get back to the original cobordism.  This implies that the original cobordism is its own additive inverse, or that multiplying the cobordism by two kills the cobordism.
\begin{equation}
    \includeCobEq{S02F015}
\end{equation}
The category $\cobIII$ is subject to some additional relations. If a cobordism contains either a sphere or a torus, the cobordism is 0. These are known as the sphere (S) and torus (T) relations, respectively.
\begin{equation}\label{eq:2:SandTrelations}
    \includeCobEq{S02F016}
\end{equation}
The final relation is called the four tube (4Tu) relation and it is given by the following equation:
\begin{equation}
    \includeCobEq{S02F017}
\end{equation}
Some immediate consequences of the (4Tu) relation that will be used later can be found in Appendix \ref{apdx:4tuvariants}, along with the computations of these variants.

To summarize, $\cobIII$ is a category of chronological planar cobordisms with oriented descending manifolds generated by the following cobordisms:
\begin{multicols}{2}
\begin{itemize}
    \item Birth cobordisms
    \item Clockwise death cobordisms
    \item Counterclockwise death cobordisms
    \item Merge cobordisms
    \item Oriented split cobordisms
    \item Planar ambient isotopy cobordisms
\end{itemize}
\end{multicols}
Modded out by the following local relations:
\begin{multicols}{2}
\begin{itemize}
    \item Orientation reversing relations
    \item \enquote{Pruning} relations
    \item Chronological change relations
    \item Sphere relation
    \item Torus relation
    \item Four tube relation
    \item Associativity relations
    \item Ambient isotopies of planar ambient isotopic cobordisms
\end{itemize}
\end{multicols}

\subsubsection{The Final Target Category $\cobIIIKM$}
Our final goal is to assign a homological algebra type object to each link cobordism.  Objects in the category $\cobIII$ are those we will want to take chain complexes of, but before we do so we must guarantee that the category we are working with is additive.  To this end, we replace $\cobIII$ with $\cobIIIM$, where $\text{Mat}(\mathcal{A})$ denotes the additive closure of the preadditive category $\mathcal{A}$.  Its objects are finite (possibly empty) sequences of objects in $\mathcal{A}$, and its morphisms are given by matrices of morphisms in $\mathcal{A}$.  The composition is modeled on ordinary matrix multiplication.

We can now consider the category $\cobIIIKM$ of bounded chain complexes and chain maps in $\cobIIIM$.  Let $\cobIIIKbM$ denote the corresponding homotopy category in which homotopic chain maps are identified.  It is equivalent to discuss functoriality up to sign when the target category is the bounded homotopy category $\cobIIIKbM$, and functoriality when the target category is the slightly modified category $\cobIIIKbMpm$ in which homotopy classes of chain maps are identified with their negatives.  Note that the latter category is no longer preadditive.

\section{Odd Khovanov Homology}
We will now construct Putyra's version of odd Khovanov homology.  Then we will review portions of the proof of invariance, as they will be needed to extend odd Khovanov homology to link cobordisms and to prove that it is functorial.  We will end this section by defining the chain maps that are assigned to each of the elementary cobordisms in $\cobIV$.
\subsection{Links with Oriented Crossings}
Odd Khovanov homology is not computed from a simple link diagram, but from a link diagram that has been enhanced with orientations on the crossings.  Each crossing has two possible orientations, which are displayed in Figure \ref{fig:orientedCrossing}.

\begin{figure}[H]
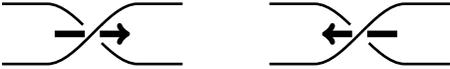

    \centering
    \includeFig{S03F001}
    \caption{The two possible orientations on a crossing}
    \label{fig:orientedCrossing}
\end{figure}

Note that the arrows specifying the crossing orientation are chosen so that they connect the regions that lie to the left if one approaches the crossing along the overstrand from either side.

\subsection{Crossing Resolution}

A link diagram with $n$ crossings gives rise to $2^n$ planar diagrams corresponding to all possible combinations of replacing each crossing with the vertical or horizontal resolution, where the terms \enquote{vertical} and \enquote{horizontal} refer to the pictures in Figure \ref{fig:orientedCrossing}.  The rule for resolving a crossing is shown in Figure \ref{fig:resolutions}.  If the crossings are labeled from 1 to $n$, then each diagram, $D_\alpha$, can be assigned a binary label $\alpha\in\{0,1\}^n$ with a 0 in the $i$\ordth place if the $i$\ordth crossing was replaced with the vertical resolution, and a 1 if it was replaced with the horizontal resolution.  

\begin{figure}[H]
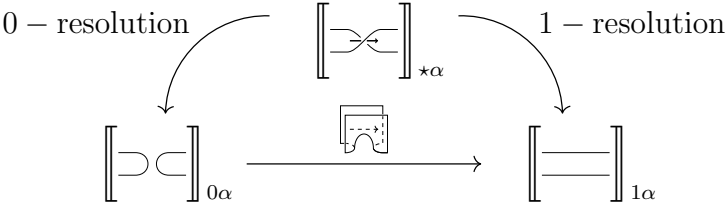

    \centering
    \captionsetup{width=\textwidth}
    \includeFig{S03F006}
     \caption{The vertical and horizontal resolution of a crossing}
    \label{fig:resolutions}
\end{figure}

\begin{convention}
    To denote that an index is in superposition we use a $\star$ at that index.
\end{convention}
We assign a cobordism $d_{{\dots}\star{\dots}}:D_{{\dots}0{\dots}}\rightarrow D_{{\dots}1{\dots}}$ to each pair of planar diagrams that differ by a single resolution.  The cobordism $d_{{\dots}\star{\dots}}$ is a morphism in $\cobIII$ whose initial frame is the vertical resolution and whose terminal frame is the horizontal resolution with a saddle between (as depicted in Figure \ref{fig:resolutions}).  The saddle inherits its orientation from the resolved crossing's orientation.

\begin{convention}
    Greek characters will be used to denote binary strings and the concatenation of Greek characters should be read as the concatenation of the binary strings.  Furthermore, $\zeta$ will be used to denote the string consisting of only zeros.
\end{convention}

\begin{definition}
    The \defw{degree} of a resolution is the number of horizontal resolutions in the resulting diagram.
\end{definition}

The degree of a resolution is thus the number of ones in the diagram's binary label, and it is denoted $\deg(\alpha)$.

\subsubsection{The Odd Khovanov Bracket}

\begin{definition}
    The \defw{cube of resolutions} of a link diagram $D$ with $n$ crossings is an $n$ dimensional cube with the $2^n$ planar diagrams $D_\alpha$ for $\alpha\in\{0,1\}^n$ as its vertices and the corresponding cobordisms as its edges.
\end{definition}

Each 2-dimensional face of the cube of resolutions corresponds to two vertices living in the superposition of their resolution, while all others are resolved.  In order to define odd Khovanov homology we need a chain complex. Therefore, all faces must anticommute so that the differential squares to zero.  Each face falls into one of a few basic types and has an inherited sign $\sigma_{i,j}$ of either 1 or $-1$ corresponding with the indices of the non-resolved crossings in $D_{\dots{\star}\dots{\star}\dots}$. The inherent signs of the basic faces are shown in the second section \enquote{Commutivity of Faces in the Odd Khovanov Cube} of Appendix \ref{apdx:asscomm}.\footnote{An unfortunate coincidence arising from our notation is that the type Y configuration is the tenth face type and has been labeled with the roman numeral \enquote{x}, not to be confused with the type X configuration which has received the numeral \enquote{vi}.}  We need to make an assignment of a sign $\epsilon_{\dots{\star}\dots}$ to each edge such that
\begin{equation}\label{eq:3:validFace}
    \displaystyle\sigma_{i,j}\left[\prod_{\{\iota_i,\iota_j\}\in\{\{0,{\star}\},\{1,{\star}\},\{{\star},0\},\{{\star},1\}\}}\epsilon_{\dots{\iota_i}\dots{\iota_j}\dots}\right] = -1
\end{equation}%
Such a sign assignment can always be made (\cite{ORS2007},\cite{Pu2015}), but it is not unique.  For example, one can always get a new sign assignment by swapping the sign of each edge, and there are in general $2^{2^n-1}$ sign assignments satisfying \eqref{eq:3:validFace}.

\begin{definition}
    The \defw{odd Khovanov cube} is the cube of resolutions with each edge multiplied by its assigned sign.\footnote{The odd Khovanov cube normally refers to the cube of resolutions in \cite{ORS2007} which is a cube with truncated exterior algebras at each vertex.  In this dissertation, we bypass this object as we pass to the odd Khovanov bracket before applying the odd Khovanov TQFT.  If we instead passed our cube of resolutions through the odd Khovanov TQFT, we would arrive at the canonical cube of resolutions from \cite{ORS2007}.}
\end{definition}

\begin{definition}
    The \defw{odd Khovanov bracket} is the grading-shifted flattening of the odd Khovanov cube created by taking the direct sum of all vertices with the same degree.
\end{definition}

The odd Khovanov bracket is a chain complex, as all maps in the Khovanov cube raise the degree by one, and before collapsing the cube we ensured that all faces anticommute.  The grading shift in the odd Khovanov bracket is determined by the number of negative crossings in the link diagram. Note that we interpret the odd Khovanov bracket as an object of the category $\cobIIIKM$.

\begin{figure}[H]
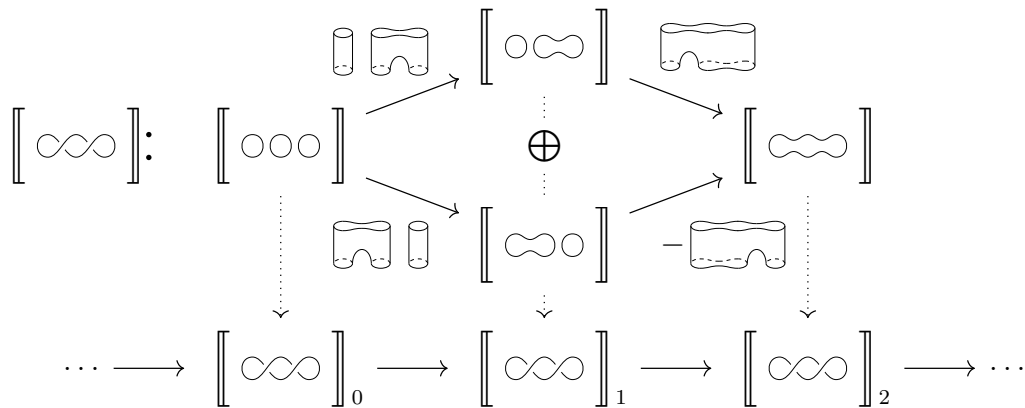

    \centering
    \includeFig{S03F005}
     \caption{The odd Khovanov bracket of a twice twisted unknot}
    \label{fig:unknot}
\end{figure}

\begin{convention}
    The unsigned cobordisms in the odd Khovanov cube will be denoted with $d$ while the maps in the odd Khovanov bracket are denoted with $\partial$.
\end{convention}
\begin{convention}
    For a link diagram $L$ the odd Khovanov bracket is denoted $\llbracket L\rrbracket$, additionally, the specific $\alpha$\ordth vertex is denoted by $\llbracket L\rrbracket_\alpha$ so that $\llbracket L\rrbracket_\alpha=D_\alpha$.
\end{convention}

\subsection{Odd Khovanov Homology}\label{sec:3:okh}

In \cite{Pu2015}, Putyra showed that the homotopy type of the odd Khovanov bracket as defined above is an invariant of links.  In order to define odd Khovanov homology we must first define a TQFT-type functor $\ff$ that takes our cobordisms to an abelian category.  Putyra defined a very similar functor in \cite{Pu2015} to that in \cite{ORS2007} only using his cobordism language. As both functors are essentially the same, and produce identical invariants, we will use the original functor from \cite{ORS2007} translated into our setting.  Let $R$ be a resolution of a link and $V(R)$ be the free abelian group generated by the circles in the resolution.  We will define $\mathcal{F}(R)$ as the exterior algebra
\begin{equation}
    \ff(R)\coloneqq\Lambda^*V(R)    
\end{equation}
For a cobordism, we will call the initial planar diagram $R$ and the terminal planar diagram $R'$.  If our cobordism is a merge saddle, it takes two circles $c_0$ and $c_1$ in $R$ to a circle $c'$ in $R'$, and if it is a split it takes a circle $c$ in $R$ to circles $c_0'$ and $c_1'$ in $R'$.  Our functor is then defined as follows on merge and split saddles:
\begin{equation}
\begin{array}{cccc}
    \text{Merge} & \{c_0,c_1\} & \rightarrow &\{c'\} \\
    \text{Split} & \{c\} &\rightarrow &\{c_0',c_1'\}
\end{array}
\stackrel{\ff}{\rightarrow}
\begin{array}{c}
    \Lambda^*V(R)\rightarrow \Lambda^*V(R)/(c_0-c_1)\cong \Lambda^*V(R') \\
    \Lambda^*V(R)\rightarrow (c_0'-c_1')\wedge\Lambda^*V(R') \\
\end{array}
\end{equation}

\begin{figure}
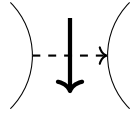

    \centering
    \includeFig{S03F019} 
    \caption{The arcs after a split cobordism with a downward orientation.  The dashed arrow points from the circle $c_0'$ to $c_1'$}
    \label{fig:circlegenerator}
\end{figure}

Here the map assigned to a split is defined by identifying $\Lambda^*V(R)$ with
\begin{equation}
    \Lambda^*(R')/(c'_0-c'_1)\cong(c_0'-c_1')\wedge\Lambda^*V(R)
\end{equation}
where the isomorphism is given by taking the wedge product from the left with $c_0'-c_1'$. In this definition, it is important that the circles $c_0'$ and $c_1'$ have different roles, and which circle we choose for each role is determined by the orientation on the saddle.

Explicitly, we rotate the orientation on the saddle by ninety degrees clockwise to arrive at an arrow pointing from $c_0'$ to $c_1'$, as in Figure \ref{fig:circlegenerator}.

The map assigned to a birth is induced by the inclusion $V(R)\rightarrow V(R\sqcup\bigcirc)$, and the map assigned to a clockwise death is given by contraction from the left with the dual of the component that gets annihilated.

\begin{definition}
    The \defw{odd Khovanov complex} is the chain complex produced by applying the odd Khovanov TQFT to the odd Khovanov bracket.
\end{definition}
\begin{definition}
    \defw{Odd Khovanov homology} is the homology of the odd Khovanov complex.
\end{definition}

To construct odd Khovanov homology one must fix a diagram, fix an orientation on the crossings, and finally fix a sign assignment.  To show that odd Khovanov homology is well defined it is sufficient to show that these choices do not affect the homotopy type of the odd Khovanov bracket.  We will recall the relevant lemmas and theorems from \cite{ORS2007} and \cite{Pu2015} here for use later.
\begin{lemma}
    \label{lem:3:signinv}
    All valid sign assignments for a particular diagram with oriented crossings of a link produce isomorphic odd Khovanov brackets.
\end{lemma}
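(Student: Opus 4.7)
My plan is to build an explicit chain isomorphism between the two odd Khovanov brackets. Given two valid sign assignments $\epsilon$ and $\epsilon'$, I would first form the edge-function $\eta_{\dots\star\dots} := \epsilon_{\dots\star\dots}\,\epsilon'_{\dots\star\dots} \in \{\pm 1\}$. Because both $\epsilon$ and $\epsilon'$ satisfy \eqref{eq:3:validFace} with the same inherent face signs $\sigma_{i,j}$, the product of the four $\eta$-values around any $2$-face of the cube of resolutions equals $(-\sigma_{i,j})\cdot(-\sigma_{i,j}) = 1$. Viewing $\{\pm 1\}$ as $\mathbb{Z}/2$ written multiplicatively, this says precisely that $\eta$ is a $1$-cocycle on the standard CW-cube $\{0,1\}^n$.

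Next I would exhibit $\eta$ as a coboundary, using that the cube is contractible. Explicitly, I would set $\mu(\zeta) = 1$ and, for an arbitrary vertex $\alpha$, define
\begin{equation}
    \mu(\alpha) := \prod_{i=1}^{k} \eta_{\alpha^{i-1}\alpha^{i}}
\end{equation}
for any chosen monotone path $\zeta = \alpha^0 \to \alpha^1 \to \cdots \to \alpha^k = \alpha$ in the cube. Path-independence, which is the only nontrivial content of the argument, would follow from the cocycle condition: any two monotone paths between the same pair of vertices are connected by a finite sequence of $2$-face swaps, and each such swap multiplies the product by $\prod_{\text{face}}\eta = 1$. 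The resulting map $\mu:\{0,1\}^n\to\{\pm 1\}$ will satisfy $\mu(\alpha)\mu(\beta) = \eta_{\alpha\beta}$ on every directed edge of the cube.

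The promised isomorphism $\phi : \llbracket D\rrbracket^\epsilon \to \llbracket D\rrbracket^{\epsilon'}$ will then be the direct sum over resolutions of the scalar maps $\phi|_{D_\alpha} = \mu(\alpha)\cdot\mathrm{id}_{D_\alpha}$. Each component is its own inverse in $\cobIIIM$, so $\phi$ is automatically an isomorphism of chain groups. To verify that $\phi$ intertwines the differentials, I would check, for each directed edge $\alpha\to\beta$ of the cube, that the $(D_\alpha, D_\beta)$-component of $\phi\circ\partial_\epsilon$ is $\mu(\beta)\epsilon_{\alpha\beta}d_{\alpha\beta}$ while that of $\partial_{\epsilon'}\circ\phi$ is $\mu(\alpha)\epsilon'_{\alpha\beta}d_{\alpha\beta}$; these agree by the defining property $\mu(\alpha)\mu(\beta) = \eta_{\alpha\beta}$.

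The construction yields a strict chain isomorphism, so the two brackets are in fact isomorphic in $\cobIIIKM$, which is strictly stronger than the homotopy equivalence needed for the statement in $\cobIIIKbM$. The argument is agnostic about whether the sign assignments are of type X or type Y, since it uses only that both $\epsilon$ and $\epsilon'$ satisfy \eqref{eq:3:validFace} with the same $\sigma_{i,j}$; this is why the present lemma is considerably easier than Proposition \ref{prop:3:XYinv}, which must instead reconcile two different face conventions. The main (really the only) obstacle I expect is the bookkeeping for path-independence of $\mu$, which reduces to the standard cocycle-to-coboundary argument on a contractible cubical complex.
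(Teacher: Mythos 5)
Your argument is correct and is essentially the same as the paper's: both reinterpret the ratio (or $\mathbb{Z}_2$-difference) of the two sign assignments as a cellular $1$-cocycle on the cube, invoke contractibility to realize it as the coboundary of a $0$-cochain, and use that $0$-cochain to define the vertexwise $\pm\operatorname{id}$ chain isomorphism. You simply unpack the ``contractible $\Rightarrow$ cocycle is a coboundary'' step into an explicit path-independence argument, and you spell out the intertwining check, which the paper leaves implicit.
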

Since similar arguments will play a role later, we will briefly recall the proof of Lemma~\ref{lem:3:signinv}.
\begin{proof}
Let $Q$ denote the cube $[0,1]^n$ with its usual cell structure. After identifying $\{\pm 1\}$ with $\mathbb{Z}_2$, we can interpret an assignment of a sign to each edge in the resolution cube as a cellular 1-cochain $\epsilon\in C^1(Q;\mathbb{Z}_2)$. The condition in equation \eqref{eq:3:validFace} then translates to $\delta\epsilon=-\sigma$ where $\sigma$ denotes the 2-cochain given by sending the faces of $Q$ to the signs $\sigma_{i,j}\in\{\pm 1\}$. If $\epsilon$ and $\epsilon'$ are two valid sign assignments, it follows that their difference is a cocycle, and since $Q$ is contractible, this cocycle is a coboundary of a 0-cochain $\eta\in C^0(Q;\mathbb{Z}_2)$. The chain isomorphism $f_{\epsilon'\epsilon}$ relating the complexes corresponding to $\epsilon$ and $\epsilon'$ is now given by $\pm\operatorname{id}_{D_\alpha}$ at each vertex of the resolution cube, where the sign at the vertex $\alpha$ is given by $\eta(\alpha)\in\{\pm 1\}$.
\end{proof}
\begin{lemma}
    \label{lem:3:orientinv}
    All choices of crossing orientation for a particular diagram of a link produce isomorphic odd Khovanov brackets.
\end{lemma}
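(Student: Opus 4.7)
The plan mirrors the proof of Lemma \ref{lem:3:signinv}: I will construct an explicit valid sign assignment for the cube of resolutions after the orientation change that makes the new odd Khovanov bracket literally equal to the old one, and then invoke Lemma \ref{lem:3:signinv} to conclude that any other valid sign assignment produces an isomorphic bracket. Since any two orientation choices on the crossings differ at finitely many crossings, it suffices to handle the case of flipping the orientation of a single crossing, say crossing $m$.

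First I would track how the unsigned cube changes: flipping crossing $m$ only alters the saddle orientations on the $m$-edges, and by the orientation-reversing relations in $\cobIII$ (equations \eqref{eq:deathSignChange}--\eqref{eq:splitSignChange} and the remark that reversing a merge's orientation does nothing), a split $m$-saddle picks up a factor of $-1$ while a merge $m$-saddle is unchanged. Writing $s_e\in\{0,1\}$ for the indicator of the event ``$e$ is an $m$-edge whose saddle is a split''---a property determined by the underlying topology of the resolutions alone, independent of crossing orientations---we obtain $d_e^{new}=(-1)^{s_e}d_e^{old}$ for every edge $e$. I would then define $\epsilon_e^{new}:=(-1)^{s_e}\epsilon_e^{old}$ and verify validity face by face. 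Faces not involving position $m$ are automatic since neither $\sigma$, nor the cobordisms, nor the $\epsilon$'s change. On a face with superposed positions $\{m,j\}$, rewriting the old face relation $d_j^1 d_m^0=\sigma_{m,j}\,d_m^1 d_j^0$ after substituting the modified $m$-edge cobordisms gives $\sigma_{m,j}^{new}=(-1)^{s_m^0+s_m^1}\sigma_{m,j}$, where $s_m^k$ denotes the split indicator of the $m$-edge at $\alpha_j=k$. Since exactly those two $m$-edges on the face have their $\epsilon$ modified, $\prod\epsilon^{new}=(-1)^{s_m^0+s_m^1}\prod\epsilon^{old}$ on this face, so $\sigma_{m,j}^{new}\prod\epsilon^{new}=\sigma_{m,j}\prod\epsilon^{old}=-1$, and validity is preserved. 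Finally, the two copies of $(-1)^{s_e}$ cancel edge by edge: $\partial_e^{new}=\epsilon_e^{new}d_e^{new}=\partial_e^{old}$, so the new bracket coincides with the old one as a chain complex, and Lemma \ref{lem:3:signinv} finishes the argument.

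The main obstacle I anticipate is the validity check when the two $m$-edges of a face carry different saddle types (one split, one merge); in that case the intrinsic commutativity sign $\sigma_{m,j}$ genuinely flips, and one must confirm that the sign change supplied by $\epsilon^{new}$ exactly compensates for it. The uniform identity $\sigma_{m,j}^{new}/\sigma_{m,j}=(-1)^{s_m^0+s_m^1}$ extracted directly from the face relation in $\cobIII$ handles the all-split, all-merge, and mixed cases simultaneously, so the bookkeeping collapses to a single parity check.
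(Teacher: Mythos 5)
Your approach is the same one the paper cites from \cite{ORS2007} and \cite{Pu2015}: exhibit, for each crossing orientation, a valid sign assignment so that the two signed cubes coincide on the nose, then appeal to Lemma~\ref{lem:3:signinv}. The bookkeeping---$d_e^{\mathrm{new}}=(-1)^{s_e}d_e^{\mathrm{old}}$, $\epsilon_e^{\mathrm{new}}:=(-1)^{s_e}\epsilon_e^{\mathrm{old}}$, cancellation at the level of $\partial$---is correct and is exactly what makes the complexes literally identical.

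One step in your validity check is not quite as automatic as you present it. You derive $\sigma_{m,j}^{\mathrm{new}}=(-1)^{s_m^0+s_m^1}\sigma_{m,j}$ by substituting the modified $m$-edge cobordisms into the ``face relation'' $d_j^1d_m^0=\sigma_{m,j}\,d_m^1d_j^0$ and say this is extracted ``directly from the face relation in $\cobIII$.'' That works for faces of types i--v and vii--ix, where the relation really does pin down $\sigma_{m,j}$. But for the two exceptional 2-torsion configurations (types vi and x, i.e., the type X and type Y configurations), the composite $d_j^1d_m^0$ is annihilated by $2$, so in $\cobIII$ it equals both $+d_m^1d_j^0$ and $-d_m^1d_j^0$; the relation determines $\sigma_{m,j}$ only up to sign, and $\sigma_{m,j}$ is fixed instead by the chosen type Y convention. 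So for those faces the formula $\sigma^{\mathrm{new}}=(-1)^{s_m^0+s_m^1}\sigma^{\mathrm{old}}$ cannot be read off algebraically; you must instead observe that reversing the orientation of crossing $m$ carries a type vi face to a type x face (and vice versa), that both $m$-edges of such a face are a split--merge pair (so $(-1)^{s_m^0+s_m^1}=-1$), and that the fixed type Y convention therefore flips $\sigma$ in exactly the way your formula predicts. Your conclusion is right, but the justification on these particular faces needs to invoke the convention rather than a $\cobIII$ identity; this is precisely the delicacy that the nearby discussion of type X versus type Y sign assignments (Proposition~\ref{prop:3:XYinv}) is guarding against.
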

In \cite{ORS2007} and \cite{Pu2015}, this lemma is proven by showing that for any two crossing orientations $o$ and $o'$ for a given link diagram $D$, there are valid sign assignments $\epsilon$ and $\epsilon'$ such that the complexes constructed from $(D,o,\epsilon)$ and $(D,o',\epsilon')$ are identical.

We emphasize that the chain isomorphisms coming from the proofs of Lemmas~\ref{lem:3:signinv} and \ref{lem:3:orientinv} are essentially canonical. More precisely, let $C$ and $C'$ be the complexes constructed from two crossing orientations $o$ and $o'$ for $D$ and from corresponding sign assignments $\epsilon$ and $\epsilon'$. We then have:

\begin{lemma}\label{lem:3:uniqueness} If $f,g\colon C\rightarrow C'$ are two chain isomorphisms that restrict to $\pm\operatorname{id}_{D_\alpha}$ at each vertex of the resolution cube, then $f=\pm g$.
\end{lemma}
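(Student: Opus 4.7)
My plan is to pass from the two isomorphisms to their ``ratio'' $h := g^{-1}\circ f$, which is a chain automorphism of $C$ that again restricts to $\pm\id_{D_\alpha}$ at every vertex $\alpha$ of the resolution cube $Q=[0,1]^n$. Showing $h=\pm\id_C$ immediately yields $f=\pm g$, so the task reduces to proving the following: any chain automorphism of $C$ whose restriction to every vertex is $\pm\id_{D_\alpha}$ is globally $\pm\id_C$.

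To prove this reduction I would mimic the cohomological bookkeeping from the proof of Lemma~\ref{lem:3:signinv}. Write $h(\alpha)=\eta(\alpha)\id_{D_\alpha}$ and view $\eta$ as a $0$-cochain on $Q$ with values in $\{\pm 1\}\cong\mathbb{Z}_2$. The chain map equation $\partial_C\,h = h\,\partial_C$, specialized to an edge $\alpha\to\beta$ of $Q$, becomes
\begin{equation}
\bigl(\eta(\beta)-\eta(\alpha)\bigr)\,\epsilon_{\alpha\beta}\,d_{\alpha\beta} \;=\; 0
\end{equation}
in $\cobIIIM$, where $d_{\alpha\beta}$ is the saddle cobordism associated to that edge and $\epsilon_{\alpha\beta}\in\{\pm 1\}$ is the chosen sign. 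Provided $d_{\alpha\beta}$ is neither zero nor $2$-torsion, this forces $\eta(\beta)=\eta(\alpha)$ on every edge, so $\delta\eta=0$ as a $1$-cochain. Connectivity of the $1$-skeleton of $Q$ then identifies $\eta$ with a constant cocycle, giving $h=\pm\id_C$.

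The hard part is verifying that each saddle $d_{\alpha\beta}$ is not $2$-torsion in $\cobIII$. I would settle this by pushing forward through the TQFT $\mathcal{F}$ defined in Section~\ref{sec:3:okh}: locally, $\mathcal{F}(d_{\alpha\beta})$ is either a quotient map (merge case) or the left wedge multiplication $v\mapsto(c_0'-c_1')\wedge v$ (split case) between exterior algebras on free abelian groups, both of which are manifestly nonzero and non-$2$-torsion. Functoriality of $\mathcal{F}$ then rules out $2\,d_{\alpha\beta}=0$ in $\cobIII$, completing the argument. In spirit this mirrors the proof of Lemma~\ref{lem:3:signinv}: once the single local fact that saddles are $2$-torsion-free is in hand, the simple topology of the cube does the rest.
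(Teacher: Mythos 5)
Your argument is essentially the paper's own: both rely on the fact that a saddle cobordism is not annihilated by $2$ in $\cobIII$, so that the chain-map condition propagates the sign of the map across each edge of $Q$, and connectivity of the cube then fixes the sign globally from the leftmost vertex. The one thing you add is an explicit verification (via the TQFT $\mathcal{F}$ and the torsion-freeness of hom-groups between free abelian groups) that saddles are not $2$-torsion, which the paper merely asserts, and your reduction to the automorphism $h=g^{-1}\circ f$ is a harmless repackaging of the same observation.
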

\begin{proof}
This follows because each saddle cobordism that appears in the differentials of these complexes has a well-defined sign, since it is not annihilated by 2 in $\cobIII$. If $f$ is a chain isomorphism of the stated form, the sign of $f|D_{\alpha}$ at the initial vertex of an oriented edge in the resolution cube therefore determines the sign at the terminal vertex (since $f$ must commute with the differentials). Hence, $f$ is uniquely determined by its sign at the leftmost vertex of the resolution cube.
\end{proof}

As a consequence of Lemma~\ref{lem:3:uniqueness}, the isomorphisms $f_{\epsilon'\epsilon}$ from Lemma~\ref{lem:3:signinv} satisfy the coherence conditions $f_{\epsilon\epsilon}=\pm\operatorname{id}$ and $f_{\epsilon''\epsilon'}\circ f_{\epsilon'\epsilon}=\pm f_{\epsilon''\epsilon}$.

For later use, we also note that any subcube of the cube of resolutions of a link diagram $D$ corresponds to the cube of resolutions for a link diagram with fewer crossings. Moreover, any valid sign assignment on the cube of resolutions for $D$ restricts to a valid sign assignment on this subcube. We further have:

\begin{lemma}\label{lem:3:CWsubcomplex} If $Q'$ is a subcube of $Q=[0,1]^n$ of any codimension, then any valid sign assignment on $Q'$ extends to a valid sign assignment on $Q$. More generally, this holds for any CW subcomplex $Q'\subseteq Q$ with $H^1(Q';\mathbb{Z}_2)=0$.
\end{lemma}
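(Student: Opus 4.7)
The plan is to extend the cohomological perspective used in the proof of Lemma \ref{lem:3:signinv}. Working throughout with $\mathbb{Z}_2$-coefficients, encode the inherent face signs from equation \eqref{eq:3:validFace} as a fixed 2-cochain $\sigma\in C^2(Q;\mathbb{Z}_2)$, so that a sign assignment $\epsilon\in C^1(Q;\mathbb{Z}_2)$ is valid precisely when $\delta\epsilon=\sigma$. The first step is to verify that $\sigma$ is a cocycle. This is immediate from the fact that valid sign assignments exist on any full cube (cited right after equation \eqref{eq:3:validFace}): any such $\epsilon$ realizes $\sigma$ as a coboundary, which is automatically a cocycle. Alternatively, one can check it directly by inspecting the six 2-faces bounding each 3-subcube of $Q$ and verifying that their inherent signs multiply to $+1$.

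Next, given a valid sign assignment $\epsilon'$ on $Q'$, extend it arbitrarily to a 1-cochain $\tilde\epsilon\in C^1(Q;\mathbb{Z}_2)$ and form the defect $\tau\coloneqq\delta\tilde\epsilon+\sigma\in C^2(Q;\mathbb{Z}_2)$. On every 2-face of $Q'$ the validity of $\epsilon'$ forces $\tau$ to vanish, so $\tau$ lies in the relative cochain group $C^2(Q,Q';\mathbb{Z}_2)$, and the computation $\delta\tau=\delta\sigma=0$ exhibits it as a relative 2-cocycle.

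Now apply the cohomology long exact sequence of the pair $(Q,Q')$:
\[
H^1(Q;\mathbb{Z}_2)\longrightarrow H^1(Q';\mathbb{Z}_2)\longrightarrow H^2(Q,Q';\mathbb{Z}_2)\longrightarrow H^2(Q;\mathbb{Z}_2).
\]
Since $Q=[0,1]^n$ is contractible, the two outer groups vanish, and the hypothesis $H^1(Q';\mathbb{Z}_2)=0$ then forces $H^2(Q,Q';\mathbb{Z}_2)=0$. Hence $\tau=\delta\eta$ for some $\eta\in C^1(Q,Q';\mathbb{Z}_2)$, that is, some $\eta$ vanishing on every 1-cell of $Q'$. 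Setting $\epsilon\coloneqq\tilde\epsilon+\eta$ yields a 1-cochain satisfying $\epsilon|_{Q'}=\epsilon'$ and $\delta\epsilon=\delta\tilde\epsilon+\delta\eta=\delta\tilde\epsilon+\tau=\sigma$, the desired valid extension. The statement about subcubes follows at once, since every subcube of $[0,1]^n$ is itself contractible and so has trivial $H^1$ with any coefficients.

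The main obstacle I anticipate is the bookkeeping that translates equation \eqref{eq:3:validFace} into the cochain identity $\delta\epsilon=\sigma$ and confirms that $\sigma$ is a cocycle; once this is set up, the rest is pure obstruction theory, killing the class $[\tau]\in H^2(Q,Q';\mathbb{Z}_2)$ using contractibility of $Q$ together with the hypothesis $H^1(Q';\mathbb{Z}_2)=0$.
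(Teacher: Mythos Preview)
Your proof is correct, but it takes a different route than the paper. The paper starts with a known global valid assignment $\epsilon$ on $Q$, compares $\epsilon|_{Q'}$ with $\epsilon'$, and uses $H^1(Q';\mathbb{Z}_2)=0$ to write their difference as $\delta\eta'$ for some $\eta'\in C^0(Q';\mathbb{Z}_2)$; extending $\eta'$ to $\eta\in C^0(Q;\mathbb{Z}_2)$ and replacing $\epsilon$ by $\epsilon+\delta\eta$ gives the required extension. In other words, the paper corrects at the level of $0$-cochains rather than $1$-cochains, and never needs relative cohomology or the long exact sequence.

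Your obstruction-theoretic argument is conceptually cleaner and more self-contained once $\sigma$ is known to be a cocycle. The paper's approach, however, buys something extra that is used later: by choosing $\eta$ to vanish on vertices outside $Q'$, the extension agrees with the original $\epsilon$ on every edge with no endpoint in $Q'$, so the resulting complex is obtained from the complex for $\epsilon$ by applying the isomorphism $f_{\epsilon'(\epsilon|Q')}$ only at vertices of $Q'$. This explicit control over the extension is invoked (at least implicitly) when fixing sign assignments on subcubes in the movie-move arguments.
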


Note that the condition \eqref{eq:3:validFace} in the definition of a valid sign assignment still makes sense for an arbitrary CW subcomplex $Q'\subseteq Q$. Abstractly, Lemma~\ref{lem:3:CWsubcomplex} follows from an extension of the proof of Lemma~\ref{lem:3:signinv}, where we observed that any two valid sign assignments on $Q$ differ by an element of $Z^1(Q;\mathbb{Z}_2)$. Under the assumptions on $Q'$ from Lemma~\ref{lem:3:CWsubcomplex}, we have $Z^1(Q;\mathbb{Z}_2)=B^1(Q;\mathbb{Z}_2)$ and $Z^1(Q';\mathbb{Z}_2)=B^1(Q';\mathbb{Z}_2)$, and the lemma now follows because the restriction map $B^1(Q;\mathbb{Z}_2)\rightarrow B^1(Q';\mathbb{Z}_2)$ is always surjective.

\begin{proof}
More concretely, let $\epsilon'$ be a valid sign assignment on $Q'$ and choose any valid sign assignment $\epsilon$ on $Q$. Then, $\epsilon|Q'$ and $\epsilon'$ differ by the coboundary of an element $\eta'\in C^0(Q';\mathbb{Z}_2)$, which we can extend arbitrarily to an element $\eta\in C^0(Q;\mathbb{Z}_2)$. Then, $\epsilon+\delta\eta$ is a valid sign assignment (written additively) which restricts to $\epsilon'$. If we choose $\eta$ to be zero on all vertices of $Q$ that do not belong to $Q'$, then this sign assignment agrees with $\epsilon$ on all edges of $Q$ that have no endpoints in $Q'$. In this case, the complex constructed from $\epsilon+\delta\eta$ can be obtained from the complex for $\epsilon$ by applying the isomorphism $f_{\epsilon'(\epsilon|Q')}$ to vertices of $Q'$ and adjusting differentials accordingly. We will often use these observations, at least implicitly, to argue that we can fix the sign assignments on certain CW subcomplexes $Q'\subseteq Q$.
\end{proof}

The following lemmas were shown by Putyra. Note that in the pictures in equations \eqref{eq:3:RImap} and \eqref{eq:3:RIImap}, deaths are assumed to be oriented clockwise as in Convention~\ref{conv:2:orientations}.
\begin{lemma}\label{lem:3:RIinv}
    The chain maps for positive and negative crossings respectively depicted in diagrams \eqref{eq:3:RImap} and \eqref{eq:3:RImapalt} are strong deformation retractions, thus odd Khovanov homology is invariant under Reidemeister I moves.\footnote{If the orientation of the crossing is flipped, then the orientations on the split maps in the RI map will be flipped.}
\end{lemma}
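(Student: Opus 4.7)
The plan is to verify that the chain maps displayed in diagrams \eqref{eq:3:RImap} and \eqref{eq:3:RImapalt}---call them the projection $\pi$ (off the RI bubble) and the inclusion $i$ (of the smooth strand)---satisfy $\pi\circ i=\operatorname{id}$ together with $i\circ\pi\simeq \operatorname{id}$ via an explicit chain homotopy $H$. As a preliminary simplification, I would use Lemma~\ref{lem:3:CWsubcomplex} to normalize the valid sign assignment so that the one nontrivial edge of the single-crossing subcube local to the RI twist carries sign $+1$. This is legitimate because that edge is a contractible CW subcomplex of the ambient resolution cube, and Lemma~\ref{lem:3:signinv} guarantees that any other valid global sign assignment differs by a canonical isomorphism $f_{\epsilon'\epsilon}$, which just conjugates the retraction data and does not affect whether $\pi$, $i$, $H$ form a strong deformation retraction.

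With that normalization, the local RI subcomplex has two resolutions: the bare smooth strand, and the smooth strand together with a small disjoint unknot, connected by a single merge saddle. The projection $\pi$ acts on the strand-plus-circle summand by an oriented death of the small circle (with orientation chosen so that the sign ends up $+1$) and is the identity on the matching smooth-strand summand; the inclusion $i$ is defined symmetrically; and the homotopy $H$ is a degree-shifting birth of the small circle. I would then verify the retraction identities in turn. The identity $\pi\circ i=\operatorname{id}$ is immediate from the definitions. The chain-map properties of $\pi$ and $i$ follow from the pruning relation \eqref{eq:annihilationCreation}, since the composite of the saddle with the death (respectively birth) of the newly emerging strand collapses to an identity cobordism. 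The homotopy identity $\operatorname{id}-i\circ\pi=\partial H+H\partial$ splits summand by summand: on the smooth-strand summand, $\partial H$ is a birth composed with a merge and collapses to the identity by \eqref{eq:annihilationCreation}; on the strand-plus-circle summand, $i\pi$ already supplies the identity on one component while the remaining term $H\partial$ becomes a merge composed with a birth, which is killed or absorbed using a combination of the pruning, sphere \eqref{eq:2:SandTrelations}, and (where an extra genus appears) the four-tube relation.

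The main technical obstacle is coherent sign tracking. Every death and every split is sensitive to its orientation via \eqref{eq:deathSignChange} and \eqref{eq:splitSignChange}, and the chronological reordering of critical points in these composites can introduce further signs through the type~Y commutation conventions of $\cobIII$. I would handle this systematically by first fixing all default orientations using Convention~\ref{conv:2:orientations}, then choosing the orientation of the death inside $\pi$ and the chronological placement of the birth inside $H$ so that each pruning relation fires with sign $+1$, and finally appealing to Lemma~\ref{lem:3:uniqueness} to absorb any residual global sign into the free overall sign of $i$ (equivalently $\pi$). The negative-crossing case from diagram \eqref{eq:3:RImapalt} then proceeds by an entirely parallel argument, with the roles of the two resolutions swapped, the merge saddle replaced by a split, and the death orientations flipped accordingly.
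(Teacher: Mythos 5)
The paper does not actually prove Lemma~\ref{lem:3:RIinv}; it is quoted from Putyra's work, so there is no in-text proof here to compare against. Your reconstruction, however, contains a genuine error in the retraction data itself. You take $\pi$ (in homological degree zero, on the resolution with the extra small circle) to be a death of that circle and declare that ``$i$ is defined symmetrically,'' which in your framing can only mean that $i$ is a birth of the small circle. But then $\pi\circ i$ is a death post-composed with a birth, whose circle sheet closes up to a two-sphere and is killed by the sphere relation \eqref{eq:2:SandTrelations}; the composite is therefore $0$, not $\operatorname{id}$, so the claim that $\pi\circ i=\operatorname{id}$ is ``immediate from the definitions'' fails for the maps you wrote down. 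In the same vein, for $i$ to be a chain map you need the local differential (a merge saddle absorbing the small circle into the strand) post-composed with $i$ to vanish; but if $i$ is a birth, the pruning relation \eqref{eq:annihilationCreation} that you cite gives merge $\circ$ birth $=\operatorname{id}$, which is nonzero. The very relation you invoke to show $i$ is a chain map actually shows that it is not one.

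The inclusion component of the RI retraction cannot be a birth. Its degree-zero piece must involve a split of the strand (so that $\pi\circ i$, read as split-then-death, collapses to $\pm\operatorname{id}$ by pruning), and it must carry a correction term so that $d\circ i$---which for a naked split is split-then-merge, a genus-one cobordism that the (2H) relation only makes $2$-torsion, not zero---is genuinely cancelled. Producing that cancellation is exactly where the four-tube relation does the real work, and your proposal offers no mechanism for it; you mention 4Tu only in passing for the homotopy identity, not in the construction of $i$ itself. The homotopy $H$ being a birth is correct, and the sign-normalization scaffolding via Lemma~\ref{lem:3:CWsubcomplex}, Lemma~\ref{lem:3:signinv}, Lemma~\ref{lem:3:uniqueness}, and Convention~\ref{conv:2:orientations} is sensible, but it cannot rescue retraction maps that are topologically wrong. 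You would need to write down the genuine maps (the split with its tube/handle correction) before the retraction identities can be verified.
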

\begin{equation}
    \label{eq:3:RImap}
    \includeFig{S03F007}
\end{equation}
\begin{equation}
    \label{eq:3:RImapalt}
    \includeFig{S03F020}
\end{equation}
\begin{lemma}\label{lem:3:RIIinv}
    The chain map depicted in equation \eqref{eq:3:RIImap} is a strong deformation retraction, thus odd Khovanov homology is invariant under Reidemeister II moves.\footnote{The orientations on the saddles are inherited from the saddles to and from the uncrossed vertex. Thus, $f$ and $g$ inherit orientations from  $\varphi_{0,0{\star}\alpha}$ and $\varphi_{0,{\star}1\alpha}$, respectively.}
\end{lemma}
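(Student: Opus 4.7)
The plan is to exhibit explicitly an inclusion in the opposite direction together with a null-homotopy, and then verify the strong deformation retraction axioms using the local relations of $\cobIII$. The RII tangle has two crossings, so its resolution cube is a square with four vertices $D_{00}, D_{01}, D_{10}, D_{11}$. Writing out the four planar diagrams shows that one of the mixed vertices---call it $D_{*}$---is precisely the identity tangle of two parallel strands, the other mixed vertex is the identity tangle with an additional disjoint circle, and the two corner vertices are tangles containing a small bigon-like region. The displayed chain map $r$ in equation~\eqref{eq:3:RIImap} is the projection onto $D_{*}$, and its candidate homotopy-inverse $i$ is the inclusion of the identity complex into the RII complex at the $D_{*}$ vertex.

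First I would invoke Lemma~\ref{lem:3:CWsubcomplex} to normalize the sign assignment on the four edges of the RII square to convenient values, so that the formulas for $r$ and $i$ coincide with the maps drawn in the figure without stray overall signs. The identity $r \circ i = \id$ is then immediate, since it reduces at $D_{*}$ to the identity cobordism on two parallel strands. The main work is constructing a null-homotopy $h$ witnessing $\id - i \circ r = \partial h + h \partial$. I would set $h$ to vanish on $D_{*}$ and on the two corner vertices, and on the extra-circle vertex define $h$ to be the cap-off cobordism on that disjoint circle (tensored with the identity on the two strands), oriented to match the saddles $f$ and $g$ whose orientations are specified in the footnote to \eqref{eq:3:RIImap}. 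The homotopy identity then breaks into four local cobordism checks, one per vertex: at $D_{*}$ both sides vanish by construction; at the extra-circle vertex the check is a pruning identity (a saddle composed with a cap on the adjacent disjoint circle equals an identity cobordism on the two strands); and at each corner vertex the check reduces to a composition of a saddle and a cap that is ambient isotopic rel boundary to the identity cobordism, modulo a computable sign. The side conditions $h \circ i = 0$ and $r \circ h = 0$ follow automatically because $i$ lands in $D_{*}$ and $r$ is supported on $D_{*}$, while $h$ vanishes on $D_{*}$.

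The main obstacle will be keeping track of the signs contributed by the sign assignment, by the orientation conventions on deaths and saddles, and by chronology changes (each of which can multiply by $\pm 1$ in $\cobIII$). The footnote to \eqref{eq:3:RIImap} fixes how the orientations of $f$ and $g$ are inherited from the ambient link's cube of resolutions, so the orientation of the cap in $h$ must be chosen so that, after invoking the orientation-reversing relations \eqref{eq:deathSignChange} and \eqref{eq:splitSignChange}, the pruning relations \eqref{eq:annihilationCreation}, and the chronology relations collected in Appendix~\ref{apdx:asscomm}, the $\pm 1$ emerging from each local check has the correct sign to cancel in $\id - i \circ r = \partial h + h \partial$ rather than double. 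Once the sign assignment on the RII square is normalized via Lemma~\ref{lem:3:CWsubcomplex} and the orientation of the cap is chosen accordingly, the verification is structurally parallel to the Bar-Natan RII argument in \cite{Bn2005}, with the additional chronology bookkeeping required in the odd setting layered on top.
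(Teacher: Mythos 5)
Your proposal does not match what is actually depicted in \eqref{eq:3:RIImap}, and the mismatch is not cosmetic: it breaks the chain-map and degree constraints that make the argument go through. The footnote to \eqref{eq:3:RIImap} names two \emph{saddle} cobordisms $f$ and $g$ (inheriting orientations from $\varphi_{0,0\star\alpha}$ and $\varphi_{0,\star 1\alpha}$), and the text that follows introduces a constant $a\in\{\pm1\}$ attached to one of the components. These are not decoration; they are the correction terms that make the retraction and inclusion into chain maps. Plain projection onto the identity vertex $D_*$ is \emph{not} a chain map: the component $D_{00}\to D_*$ of the cube differential is a saddle, which is nonzero, so $r^0\circ\partial^{-1}\neq 0$ if $r$ is literally $(1,0)$ on the degree-zero object. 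The same obstruction kills the plain inclusion $i$: the component $D_*\to D_{11}$ of the differential is again a nonzero saddle, so $\partial^0\circ i\neq 0$. Since your entire verification scheme (``$r\circ i=\operatorname{id}$ is immediate,'' ``the side conditions follow automatically because $i$ lands in $D_*$ and $r$ is supported on $D_*$'') depends on $r$ and $i$ being supported only at $D_*$, it collapses once the correction terms are present.

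There is a second, independent problem with the proposed homotopy. You set $h$ to vanish except at the extra-circle vertex $D_c$, and define it there as a cap-off (death) on the disjoint circle. But the cap-off cobordism with source $D_c$ targets the two-parallel-strands diagram, i.e.\ $D_*$, which sits in the \emph{same} homological degree as $D_c$ (both are mixed vertices of a square). A chain homotopy must lower degree by one, so $h|_{D_c}$ must land in $D_{00}$, a cup-cap diagram, not in $D_*$ — the cobordism you wrote down does not even have the right codomain. In the odd (and even) setting, the homotopy exhibited by Putyra and Bar-Natan has components at the \emph{corner} vertices $D_{00}, D_{11}$ as well (these are where the Gaussian-elimination/delooping homotopies live), not a lone death at $D_c$.

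Finally, for context: the paper does not actually prove Lemma~\ref{lem:3:RIIinv} itself. The text introducing it reads ``The following lemmas were shown by Putyra,'' and the lemma is cited from \cite{Pu2015}. So a blind proof here is substituting for a citation rather than reproducing a proof in the text; but it must, at minimum, work with the displayed chain map \eqref{eq:3:RIImap}, whose $f$, $g$, and $a$ you have discarded. To repair the proposal you would need to (i) read off the correct four components of $r$ and $i$ from \eqref{eq:3:RIImap}, with their orientations and the sign $a$; (ii) define $h$ with components at the degree-$0$ and degree-$2$ corners; and (iii) redo the local checks at every vertex, in which case the side conditions are no longer automatic and must be verified using the pruning relations \eqref{eq:annihilationCreation} and the orientation-reversal relations \eqref{eq:deathSignChange}, \eqref{eq:splitSignChange}.
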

\begin{equation}
    \label{eq:3:RIImap}
    \hspace{-1.5em}\includeFig{S03F008}\hspace{-1.5em}
\end{equation}
The constant $a$ takes on the value 1 if the right crossing is oriented down, as in the diagram above, and $-1$ if oriented up or opposite to that in the diagram above.

To show that odd Khovanov homology is invariant under Reidemeister III moves, Putyra first notes that each side of the Reidemeister III move is the cone of the chain map obtained from resolving the crossing that the strand is passed behind during the move.
\begin{equation}
    \includeFig{S03F010}
\end{equation}
\begin{equation}
    \includeFig{S03F011}
\end{equation}
Next Putyra notes the following:
\begin{lemma}\label{lem:3:coneStrongDef}
    The homotopy equivalence classes of the cone of a chain map and the cone of that chain map composed with a strong deformation retraction are the same.
\end{lemma}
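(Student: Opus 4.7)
The plan is to construct explicit chain maps in both directions between $\operatorname{Cone}(f)$ and $\operatorname{Cone}(rf)$ and verify that they form a homotopy equivalence. Set up notation: let $f\colon A\to B$ be the chain map in question and let $r\colon B\to B'$ be the strong deformation retraction, with section $s\colon B'\to B$ satisfying $rs=\id_{B'}$, and let $h$ be a chain homotopy realizing $dh+hd=\id_B-sr$ together with the strong-retraction identities $rh=0$, $hs=0$, and $h^2=0$ that hold in this setting.

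First, propose the block-matrix maps
$$\Phi=\begin{pmatrix}\id_A & 0\\ 0 & r\end{pmatrix}\colon\operatorname{Cone}(f)\longrightarrow\operatorname{Cone}(rf),\qquad \Psi=\begin{pmatrix}\id_A & 0\\ 0 & s\end{pmatrix}\colon\operatorname{Cone}(rf)\longrightarrow\operatorname{Cone}(f).$$
A routine block-matrix computation shows both are chain maps (using only that $r$ and $s$ are chain maps), and $\Phi\Psi=\id_{\operatorname{Cone}(rf)}$ holds on the nose from $rs=\id_{B'}$.

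Second, show $\Psi\Phi$ is chain homotopic to $\id_{\operatorname{Cone}(f)}$. Try the natural candidate $H=\begin{pmatrix}0 & 0\\ 0 & h\end{pmatrix}$; a direct computation shows $dH+Hd$ equals $\id-\Psi\Phi$ up to an extra off-diagonal term proportional to $hf$. Absorb this by replacing $\Psi$ with $\widetilde{\Psi}=\begin{pmatrix}\id_A & 0\\ -hf & s\end{pmatrix}$. The chain-map property of $f$ together with $dh+hd=\id_B-sr$ shows $\widetilde{\Psi}$ is still a chain map, and $rh=0$ shows $\Phi\widetilde{\Psi}=\id$ is preserved, so $\widetilde{\Psi}\Phi-\id=dH+Hd$ completes the homotopy equivalence. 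The argument for a retraction composed on the other side, $\operatorname{Cone}(f)\simeq\operatorname{Cone}(fr)$, is entirely symmetric.

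The main obstacle is purely bookkeeping: fixing consistent sign conventions on the cone differential and tracking which of the strong-retraction identities ($rh=0$, $hs=0$, $h^2=0$) is invoked at each check. Conceptually the lemma is the standard fact that homotopy-equivalent chain maps have homotopy-equivalent cones, and the strong-deformation hypothesis is precisely what makes the explicit formulas for the comparison go through without further adjustment.
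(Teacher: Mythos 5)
The paper states this lemma without proof (attributing it to Putyra), so your proposal actually fills a gap in the exposition; and your argument is the standard one — it is, modulo sign conventions, the same explicit homotopy retraction of cones used by Bar-Natan in the even case and by Putyra in the chronological setting. The final data $\Phi$, $\widetilde{\Psi}$, and $H$ do exhibit $\operatorname{Cone}(rf)$ as a strong deformation retract of $\operatorname{Cone}(f)$, which is more than the lemma asks for.

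However, one assertion in the write-up is false and should be corrected. The map $\Psi = \begin{pmatrix}\id_A & 0 \\ 0 & s\end{pmatrix}$ is \emph{not} a chain map: with cone differential $\begin{pmatrix}-d_A & 0 \\ f & d_B\end{pmatrix}$, comparing $d_{\operatorname{Cone}(f)}\Psi$ with $\Psi\, d_{\operatorname{Cone}(rf)}$ forces $srf = f$, which need not hold since $sr$ is only homotopic to $\id_B$, not equal to it. So the sentence claiming ``a routine block-matrix computation shows both are chain maps (using only that $r$ and $s$ are chain maps)'' is wrong, and it is misleading to say $\widetilde{\Psi}$ is ``still a chain map'' — it is the \emph{first} candidate that is one. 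The correct framing is: $\Phi$ is a chain map for free; $\Psi$ is not; the $-hf$ term is precisely what must be added so that $f - srf = d_B h f + h f d_A$ (using $f d_A = d_B f$ and $d_B h + h d_B = \id_B - sr$) verifies the chain-map condition for $\widetilde{\Psi}$. With that fix the proof is clean: $\Phi\widetilde{\Psi}=\id$ needs only the side condition $rh=0$, the homotopy $H=\begin{pmatrix}0&0\\0&h\end{pmatrix}$ gives $\widetilde{\Psi}\Phi-\id=dH+Hd$, and $hs=0$ and $h^2=0$ are never used. One further nit: the closing ``$\operatorname{Cone}(f)\simeq\operatorname{Cone}(fr)$'' does not type-check for $r\colon B\to B'$; the symmetric statement is about precomposing $f$ with the section of a retraction on the \emph{source} of $f$.
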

The combination of Lemmas \ref{lem:3:RIIinv} and \ref{lem:3:coneStrongDef} implies that if the maps in the cones on the right-hand sides of equations \eqref{eq:3:conesL} and \eqref{eq:3:conesR} are homotopic to one another, then so are the complexes associated to each side of the Reidemeister III move.
\begin{equation}
    \label{eq:3:conesL}
    \includeFig{S03F012}
\end{equation}
\begin{equation}
    \label{eq:3:conesR}
    \includeFig{S03F013}
\end{equation}

The following two diagrams depict the maps whose cones we are considering.  For this argument, consider the maps which travel up the diagrams.  There is a second version of the Reidemeister III move where the crossing that travels over the strand is negative rather than positive.  We only prove the positive crossing case here, but the maps traveling down the cube are those you would use for the other case, and the rest of the argument is essentially identical.

\begin{equation}
    \label{eq:3:RIIIL}
    \hspace{-1em}\includeFig{S03F014}\hspace{-1em}
\end{equation}
\begin{equation}
    \label{eq:3:RIIIR}
    \hspace{-1em}\includeFig{S03F015}\hspace{-1em}
\end{equation}

Note that for a fixed $\alpha$ the chain map is supported by the maps in degree zero.  A careful investigation of the chain maps from diagrams \eqref{eq:3:RIImap}, \eqref{eq:3:RIIIL}, and \eqref{eq:3:RIIIR} reveals that the nonzero components of both chain maps are given by the direct sum of the following two maps.

\vspace{-1em}{\noindent}\begin{minipage}{0.5\textwidth}
    \begin{equation}
        \includeFig{S03F017}
    \end{equation}
\end{minipage}%
\begin{minipage}{0.5\textwidth}
    \begin{equation}
        \includeFig{S03F018}
    \end{equation}
\end{minipage}\vspace{1em}

This shows that the underlying maps are the same when coefficients are stripped away. What remains is to show that the signs in the cone in \eqref{eq:3:conesL} are consistent with those in the cone in \eqref{eq:3:conesR}. To this end, note that the cone on the right-hand side of \eqref{eq:3:conesL} contains two cubical quotient complexes: the first of these is given by the bottom layer of \eqref{eq:3:RIIIL}, and the second one is obtained from the cone complex by removing the rightmost among the four terms in this bottom layer. Corresponding to these two quotient complexes, we consider two cubes $Q\cong[0,1]^{n-1}$ and $Q'\cong[0,1]^{n-1}$, where $n$ denotes the number of crossings in the link diagrams involved in the Reidemeister III move.
Let $P\coloneqq Q\cup Q'$ denote the CW complex obtained by gluing the cubes $Q$ and $Q'$ along the two $(n-2)$-dimensional subcubes that correspond to the two edges on the left side of the bottom layer of  \eqref{eq:3:RIIIL}.

One can check directly that the signs in \eqref{eq:3:conesL} define a valid sign assignment on $P$, which we can view as a cellular 1-cochain $\epsilon\in C^1(P;\mathbb{Z}_2)$. Similarly, there is a cellular 1-cochain $\epsilon'\in C^1(P;\mathbb{Z}_2)$ coming from the signs in \eqref{eq:3:conesR}. Arguing as in the proof of Lemma~\ref{lem:3:signinv}, we see that $\epsilon$ and $\epsilon'$ must differ by a coboundary of a $0$-cochain $\eta\in C^0(P;\mathbb{Z}_2)$, and this 0-cochain gives rise to a chain isomorphism $g_\eta$ between the cones on the right-hand sides of \eqref{eq:3:conesL} and  \eqref{eq:3:conesR}. Finally, an adaption of the proof of Lemma~\ref{lem:3:uniqueness} shows that this chain isomorphism is canonical up to an overall sign.

\begin{remark}\label{rem:3:conemaps} We can equip the bottom layers of \eqref{eq:3:RIIIL} and  \eqref{eq:3:RIIIR} with the same sign assignments, so that these complexes become identical, and likewise for the top terms in \eqref{eq:3:RIIIL} and  \eqref{eq:3:RIIIR}. Lemma~\ref{lem:3:uniqueness} then implies that the chain isomorphism $g_\eta$ from above must be given by $\pm\operatorname{id}$ on both the bottom layer and the top term. This implies that the chain maps in the cones on the right-hand sides of \eqref{eq:3:conesL} and  \eqref{eq:3:conesR} are either identical or negatives of each other. We will use this observation later in the proof of invariance under movie move 15.
\end{remark}

This concludes Putyra's proof of the following lemma.
\begin{lemma}
    \label{lem:3:RIIIinv}
    Odd Khovanov homology is invariant under Reidemeister III moves.
\end{lemma}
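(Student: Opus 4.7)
The plan is to adopt the cone-based strategy laid out in the preceding exposition. First I would rewrite each side of the Reidemeister III move as a mapping cone by resolving the crossing across which the strand slides, obtaining the cones on the left-hand sides of \eqref{eq:3:conesL} and \eqref{eq:3:conesR}. Combining Lemma \ref{lem:3:RIIinv} with Lemma \ref{lem:3:coneStrongDef}, invariance then reduces to showing that the simplified cones on the right-hand sides of \eqref{eq:3:conesL} and \eqref{eq:3:conesR} — obtained after post-composing with the RII strong deformation retraction — are chain homotopy equivalent.

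Second, I would split the comparison into an underlying-cobordism part and a sign part. As already observed, in each fixed superposition degree the nonzero components of the two cone maps coincide as the direct sum of the two maps displayed just before Remark \ref{rem:3:conemaps}. The problem thus reduces to a matching of sign patterns over identical cobordism data.

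Third, to handle the signs cohomologically I would assemble the CW complex $P = Q \cup Q'$ by gluing two $(n-1)$-dimensional cubes along the $(n-2)$-subcube that corresponds to the two edges on the left side of the bottom layer of \eqref{eq:3:RIIIL}. The signs in \eqref{eq:3:conesL} and \eqref{eq:3:conesR} define cellular $1$-cochains $\epsilon, \epsilon' \in C^1(P; \mathbb{Z}_2)$, and a direct face-by-face check verifies that each satisfies $\delta\epsilon = \delta\epsilon' = -\sigma$, where $\sigma$ records the inherent $2$-face signs listed in Appendix \ref{apdx:asscomm}. Since $P$ is contractible — two cubes glued along a cube — the cocycle $\epsilon - \epsilon'$ is the coboundary of some $\eta \in C^0(P; \mathbb{Z}_2)$, and $\eta$ produces a chain isomorphism $g_\eta$ between the two cones exactly as in the proof of Lemma \ref{lem:3:signinv}. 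An adaptation of Lemma \ref{lem:3:uniqueness} shows $g_\eta$ is unique up to an overall sign. For the variant of the move in which a negative crossing passes over the strand, I would run the identical argument traversing the cubes in the downward direction, and the result follows.

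The main obstacle is confirming that $\epsilon$ and $\epsilon'$ genuinely satisfy the validity condition \eqref{eq:3:validFace} on those $2$-faces of $P$ that straddle the gluing locus, rather than merely on the $2$-faces internal to $Q$ or to $Q'$. I would dispatch this by enumerating the finitely many $2$-faces meeting the gluing subcube and reading off their inherent signs $\sigma_{i,j}$ from the face-type table in Appendix \ref{apdx:asscomm}; once this check is complete the cohomological vanishing on the contractible complex $P$ does the rest of the work automatically.
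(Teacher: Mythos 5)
Your proposal follows essentially the same cone-plus-cohomology strategy as the paper: rewrite both sides as mapping cones, reduce via Lemmas \ref{lem:3:RIIinv} and \ref{lem:3:coneStrongDef} to comparing the simplified cones in \eqref{eq:3:conesL} and \eqref{eq:3:conesR}, verify that the underlying cobordism components coincide, and then resolve the sign discrepancy by viewing the two sign patterns as cellular $1$-cochains on a CW complex $P=Q\cup Q'$, where the vanishing of $H^1(P;\mathbb{Z}_2)$ forces them to differ by a coboundary $\delta\eta$, giving a chain isomorphism $g_\eta$ that an adaptation of Lemma \ref{lem:3:uniqueness} pins down up to overall sign. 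You also correctly flag the genuine technical chore — checking the validity condition \eqref{eq:3:validFace} on the $2$-faces straddling the seam — and correctly handle the negative-crossing variant.

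One small discrepancy with the paper: the gluing locus is the union of \emph{two} $(n-2)$-dimensional subcubes (those corresponding to the two edges on the left side of the bottom layer of \eqref{eq:3:RIIIL}), not a single $(n-2)$-subcube as you wrote, so $P$ is not literally ``two cubes glued along a cube.'' This does not affect the conclusion, because the two subcubes share a common $(n-3)$-dimensional subcube, so their union — and hence the intersection $Q\cap Q'$ — is still contractible; Mayer--Vietoris then gives $H^1(P;\mathbb{Z}_2)=0$ exactly as needed. It would be worth stating this explicitly, since the contractibility of $P$ is the load-bearing fact and your one-sentence justification of it was based on the wrong picture of the gluing.
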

From the combination of Putyra's Lemmas \ref{lem:3:signinv}, \ref{lem:3:orientinv}, \ref{lem:3:RIinv}, \ref{lem:3:RIIinv}, and \ref{lem:3:RIIIinv}, his main theorem follows.
\begin{theorem}
    Odd Khovanov homology is an invariant of links.
\end{theorem}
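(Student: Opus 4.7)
The plan is to assemble the theorem directly from the lemmas already established just above. The essential content is that the chain homotopy type of the odd Khovanov bracket, and therefore the homology of the complex obtained by applying the TQFT-type functor $\ff$, depends only on the equivalence class of the oriented link, not on any of the auxiliary choices made in constructing the bracket.

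First I would unpack the dependencies in the construction. To build the complex, one selects a link diagram $D$ for the link, an orientation $o$ on each crossing, and a valid sign assignment $\epsilon$ on the cube of resolutions. Lemma \ref{lem:3:signinv} removes dependence on $\epsilon$ by exhibiting a chain isomorphism between the brackets for any two valid sign assignments, and Lemma \ref{lem:3:orientinv} removes dependence on $o$ by reducing any two crossing orientations to a common complex via matching sign assignments. Consequently, up to chain isomorphism in $\cobIIIKM$, the odd Khovanov bracket is a function of the diagram $D$ alone.

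Next I would invoke Reidemeister's theorem (cited earlier as \cite{Kr1927}): two link diagrams represent equivalent oriented links if and only if they differ by a finite sequence of planar ambient isotopies together with Reidemeister I, II, and III moves. Planar ambient isotopies act trivially on each resolution $D_\alpha$ up to planar isotopy in $\cobIII$, so they leave the bracket unchanged as an object of $\cobIIIKM$. For the three Reidemeister moves, Lemmas \ref{lem:3:RIinv}, \ref{lem:3:RIIinv}, and \ref{lem:3:RIIIinv} respectively exhibit chain homotopy equivalences relating the brackets of diagrams differing by each move. Composing these equivalences along any Reidemeister sequence between two diagrams $D$ and $D'$ of the same oriented link yields a chain homotopy equivalence $\llbracket D \rrbracket \simeq \llbracket D' \rrbracket$ in $\cobIIIKM$.

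Finally I would apply the TQFT-type functor $\ff$ termwise. Because $\ff$ is additive and sends the cobordisms appearing in the differentials to group homomorphisms, it carries chain homotopy equivalences in $\cobIIIKM$ to chain homotopy equivalences of chain complexes of abelian groups. Taking homology then gives an isomorphism between the odd Khovanov homology groups computed from any two diagrams of the same link, which is exactly the assertion of the theorem. There is no substantive obstacle in this final step: the technical work has been absorbed into the preceding invariance lemmas, and this proof is essentially a bookkeeping argument that collects those results under Reidemeister's theorem.
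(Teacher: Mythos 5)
Your proposal matches the paper's proof, which simply assembles the theorem from the five invariance lemmas (\ref{lem:3:signinv}, \ref{lem:3:orientinv}, \ref{lem:3:RIinv}, \ref{lem:3:RIIinv}, \ref{lem:3:RIIIinv}). You have added a bit of useful bookkeeping---explicitly invoking Reidemeister's theorem and noting that the TQFT functor $\ff$ preserves homotopy equivalences---but the route and the key lemmas are the same.
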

While there could a priori be many different homotopy equivalences between the two sides of a Reidemeister move, the specific homotopy equivalences described above are canonical up to an overall sign. These homotopy equivalences are also natural with respect to changes of the sign assignments, meaning that changing the sign assignments on their source and target complexes corresponds to pre- and postcomposing with the corresponding isomorphisms from Lemma~\ref{lem:3:signinv}. Moreover, changing the orientation of a crossing leaves these homotopy equivalences unchanged, as long as the sign assignments are changed accordingly.

We still need to return to the equivalence of type X and type Y theories.  Above, we reviewed the proof that odd Khovanov homology is an invariant for type Y sign assignments.  What we need to show now is that the two theories are equivalent in that there are not two distinct odd Khovanov homology invariants.

\begin{proposition}\label{prop:3:XYinv}
    Type X sign assignments and type Y sign assignments yield isomorphic odd Khovanov complexes.
\end{proposition}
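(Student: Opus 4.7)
The plan is to use cellular cohomology of the resolution cube $Q=[0,1]^n$ together with the 2-torsion structure of type X- and type Y-configuration cobordisms in $\cobIII$ to exhibit a chain isomorphism between the two complexes, following the spirit of Lemma~\ref{lem:3:signinv} but addressing the genuinely new difficulty that the face sign conventions $\sigma^X$ and $\sigma^Y$ themselves disagree.

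First I would encode each face sign convention as a cellular 2-cochain $\sigma^X,\sigma^Y\in C^2(Q;\mathbb{Z}_2)$ (identifying $\{\pm 1\}$ with $\mathbb{Z}_2$), which agree on every face type except those of X- and Y-configuration, where they swap. A valid sign assignment for type $\star\in\{X,Y\}$ is then a 1-cochain $\epsilon^\star\in C^1(Q;\mathbb{Z}_2)$ satisfying $\delta\epsilon^\star=-\sigma^\star$, and the existence of such assignments forces both $\sigma^X$ and $\sigma^Y$ to be 2-cocycles. Because $Q$ is contractible, their difference $\tau:=\sigma^X-\sigma^Y$ is a coboundary, say $\tau=\delta\mu$ for some $\mu\in C^1(Q;\mathbb{Z}_2)$, so that for any valid type Y sign assignment $\epsilon^Y$, the 1-cochain $\epsilon^Y+\mu$ is a valid type X sign assignment. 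By Lemma~\ref{lem:3:signinv} applied within each theory, it suffices to compare $C^Y$ built from $\epsilon^Y$ with $C^X$ built from $\epsilon^X:=\epsilon^Y+\mu$.

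The natural candidate for a chain isomorphism $\Phi\colon C^X\to C^Y$ is vertex-by-vertex of the form $\Phi=\sum_\alpha s_\alpha\operatorname{id}_{D_\alpha}$ with $s_\alpha\in\{\pm 1\}$. Writing out the commutation $\Phi\partial^X=\partial^Y\Phi$ along an edge $e=\alpha\to\alpha'$ gives $s_{\alpha'}-s_\alpha=\mu_e$, i.e.\ $\delta s=\mu$. This requires $\mu$ to be a 1-cocycle, but by construction $\delta\mu=\tau\neq 0$ on precisely the X- and Y-configuration faces. This is the central obstacle, and it is exactly the point that appears to be mishandled in both \cite{ORS2007} and \cite{Pu2015}.

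To overcome it, I would exploit the fact that on an X- or Y-configuration face the two path compositions $d_1d_2$ and $d_3d_4$ are annihilated by $2$ in $\cobIII$, so the identity $d_1d_2=d_3d_4$ and its negation $d_1d_2=-d_3d_4$ both hold on the nose as morphisms. This built-in sign flexibility lets one locally modify the type X edge signs on edges bounding exceptional faces, replacing $\mu$ by a corrected 1-cochain $\mu'$ that still records the same chain-complex data but is actually a cocycle. Since $H^1(Q;\mathbb{Z}_2)=0$, the system $\delta s=\mu'$ admits a solution, producing the required vertex signs. The technical heart of the proof, and the step where I expect the main difficulty to lie, is verifying that this local correction is compatible with $\partial^2=0$ on every face type simultaneously—this amounts to a careful case analysis confirming that the 2-torsion relations on X- and Y-configuration compositions absorb exactly the defect $\delta\mu=\tau$ without introducing new obstructions on the other face types enumerated in Appendix~\ref{apdx:asscomm}.
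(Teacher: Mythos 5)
Your approach is genuinely different from the paper's, and it runs into a gap that I do not see how to close. The paper's proof is a short geometric argument: rotate the link (and the crossing orientations) by $180^\circ$ about an axis in the plane of the diagram, obtaining a new pair $(D',o')$. This operation interchanges type X and type Y faces, so any type X complex for $(D',o')$ is literally a type Y complex for $(D,o)$. Since $D$ and $D'$ present the same link, invariance gives a homotopy equivalence between the type X complex for $(D,o)$ and the type X complex for $(D',o')$, hence between a type X and a type Y complex for $(D,o)$. The paper then upgrades this homotopy equivalence to an isomorphism using the observation that two bounded complexes of finitely generated free abelian groups with the same chain ranks and the same homology are isomorphic (Smith normal form). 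No cellular cohomology of the cube is needed, and no vertex-by-vertex sign isomorphism is produced.

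Your cellular argument correctly identifies the obstruction: you need $\delta s=\mu$ for $\mu=\epsilon^X-\epsilon^Y$, but $\delta\mu=\sigma^X-\sigma^Y\neq 0$, so $\mu$ is not a cocycle and no vertex sign $s$ exists. The difficulty is that this obstruction cannot be fixed within the vertex-sign framework. Replacing $\epsilon^X$ by another valid type X assignment changes $\mu$ only by a coboundary, which does not change $\delta\mu$; so the class $[\delta\mu]=[\sigma^X-\sigma^Y]$ is an invariant of the problem, not a local artifact. Your proposed remedy --- that on an X- or Y-face ``the identity $d_1d_2=d_3d_4$ and its negation $d_1d_2=-d_3d_4$ both hold on the nose'' --- is the claim that the face compositions are 2-torsion, hence $\sigma_{i,j}$ is ambiguous on those faces. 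But even granting this, the commutation relation $\Phi\partial^X=\partial^Y\Phi$ is a constraint \emph{edge by edge}, and the individual saddle cobordisms $d_e$ are not 2-torsion (they are generators of $\cobIII$; compare Lemma~\ref{lem:3:uniqueness}, whose proof explicitly uses that saddles are not annihilated by 2). So on each single edge the relation still forces $(\delta s)_e=\mu_e$, and the 2-torsion of the face \emph{composition} gives you no slack in which to ``replace $\mu$ by a corrected $\mu'$.'' Moreover, if you demand $\mu'$ be a cocycle while $\epsilon^Y+\mu'$ is a valid type X assignment, you need $\delta\mu'=\sigma^Y-\sigma^X\neq 0$, a contradiction. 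In short, there is no vertex-sign chain isomorphism between the two brackets, so the step you flag as ``the technical heart of the proof'' is not merely unverified --- it is the place where the argument breaks, and this is precisely the kind of hole the paper attributes to the earlier attempts in \cite{ORS2007} and \cite{Pu2015}. The paper sidesteps the whole issue by never trying to build such an isomorphism directly.
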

    
\begin{proof}
Let $L$ be a link with a diagram $D$ with a crossing orientation $o$. Now rotate $L$, including the orientations $o$, by 180 degrees around an axis that is parallel to the plane of the picture. This produces a new diagram $D'$ with a crossing orientation $o'$.

Equivalently, the diagram $D'$ and the orientation $o'$ can be obtained by reflecting the underlying 4-valent planar graph of $D$ and $o$ along a line in the plane and then switching the roles of the strands at each crossing in the reflected diagram.

Now any type $X$ complexes for $(D,o)$ and $(D',o')$ are homotopy equivalent because $D$ and $D'$ represent isotopic links. On the other hand, any type X complex for $(D',o')$ can be seen as a type Y complex for $(D,o)$.

In conclusion, one gets a homotopy equivalence between a type X complex and a type Y complex for $(D,o)$. To get an actual isomorphism, one can use the following fact:

If two bounded complexes of finitely generated free abelian groups have the same chain ranks and the same homology, then they are isomorphic. This follows because one can use base changes to bring the matrices of the differentials into a block matrix that is Smith normal form in the lower left entry and zero elsewhere, after which, the resulting matrices are uniquely determined by the homology and by the ranks of the chain groups.
\end{proof}

\subsection{The Odd Khovanov Homology Functor}
Now that we have reviewed the construction of odd Khovanov homology, we can begin constructing the functor.  For a link cobordism $F$ from the link diagram $L_0$ to the link diagram $L_1$, we will denote the associated chain map on the odd Khovanov brackets of the links as $\Phi$.  The particular component of $\Phi$ going from $\llbracket L_0\rrbracket_\alpha$ to $\llbracket L_1\rrbracket_\alpha$ will be denoted $\Phi_\alpha$, while the underlying cobordism will be denoted $\varphi_\alpha$.  

Unlike in the construction of the odd Khovanov cube, we will want faces that occur as a result of a link cobordism to commute.  By considering cobordisms, we add an additional dimension which we will denote the movie dimension, or movie axis.

\subsubsection{Birth Cobordisms}

The birth is the easiest cobordism to handle.  Let the cobordism $F$ be a four-dimensional birth cobordism. The link $L_1$ is the disjoint union of $L_0$ and a circle.  For each diagram $\llbracket L_0\rrbracket_\alpha$ we define $\varphi_\alpha$ as the identity cobordism from those components derived from $L$ and a planar birth cobordism on the new circle in $\llbracket L_1\rrbracket_\alpha$.  As births are even, the map is already commuting, thus $\Phi_\alpha=\varphi_\alpha$.

\subsubsection{Death Cobordisms}

 Let the cobordism $F$ be a four-dimensional death cobordism. For a death, the link $L_0$ is the disjoint union of $L_1$ and a circle.  For each $\alpha$ we define $\varphi_\alpha$ as the identity cobordism from those components derived from $L_1$ and a planar death cobordism on the lone circle in $\llbracket L_0\rrbracket_\alpha$.  Unlike with births at this stage, some of the faces may commute while others anticommute.  Let $c(L,\alpha)$ be the number of circles in the diagram $\llbracket L\rrbracket_\alpha$. We define 
 \begin{equation}
     S(L,\alpha)=\frac{c(L,\alpha)+c(L,\zeta)+deg(\alpha)}{2}
 \end{equation}
 The function $S(L,-)$ is defined such that, across a merge, the increase in degree and the decrease in the number of components cancel out, while in a split they interfere constructively to increment $S$.  The $c(L,\zeta)$ term is to ensure that $S(L,-)$ is an integer.  If we set $\Phi_\alpha\coloneqq(-1)^{S(L_0,\alpha)}\varphi_\alpha$, then we arrive at a commuting square as our signs change only when a change in chronology between a death and a split occurs in the square.

\subsubsection{Saddle Cobordisms}

Let the cobordism $F$ be a single four-dimensional saddle cobordism.  For each $\alpha$, the planar diagrams $\llbracket L_0\rrbracket_\alpha$ and $\llbracket L_1\rrbracket_\alpha$ are
related by a saddle cobordism in the neighborhood where $L_0$ and $L_1$ differ. 
If we start with the disjoint union of the resolution cubes of $L_0$ and $L_1$ and add in the saddle cobordisms
$\llbracket L_0\rrbracket_\alpha\rightarrow\llbracket L_1\rrbracket_\alpha$ as extra edges, we arrive at a cube which resembles that of a link with one more crossing than $L_0$ and $L_1$ had; we will call this link $L'$.  We can label the original $n$ crossings of $L'$ as in $L_0$ with the extra crossing given the index $n+1$.  After choosing an orientation for the extra crossing, we can find an odd Khovanov cube for $L'$, and by Lemma~\ref{lem:3:CWsubcomplex}, we can assume that the sign assignment on this cube restricts to the given sign assignments on the cubes of $L_0$ and $L_1$. The components $\partial'_{\alpha*}\colon\llbracket L_0\rrbracket_\alpha\rightarrow\llbracket L_1\rrbracket_\alpha$ of the differential in $\llbracket L'\rrbracket$ then anticommute with the differentials in $\llbracket L_0\rrbracket$ and $\llbracket L_1\rrbracket$. To get a chain map $\Phi\colon\llbracket L_0\rrbracket\rightarrow\llbracket L_1\rrbracket$, we will set $\Phi_\alpha\coloneqq(-1)^{\deg(\alpha)}\partial'_{\alpha*}$.

Note that this chain map is defined canonically up to an overall sign because the space of relative 1-cocycles $Z^1(Q',Q_0\cup Q_1;\mathbb{Z}_2)$ is isomorphic to $\mathbb{Z}_2\cong\{\pm 1\}$, where $Q'\coloneqq[0,1]^{n+1}$ denotes the cube corresponding to $L'$, and $Q_0$ and $Q_1$ are the subcubes corresponding to $L_0$ and $L_1$. 
The chain map $\pm\Phi$ is also independent of the choice of the orientation of the last crossing of $L'$. This follows because the chain complex for $L'$ stays the same if one reverses the orientation of the last crossing of $L'$ while also changing the sign assignment on edges of $Q'$ that correspond to split cobordisms $\llbracket L_0\rrbracket_\alpha\rightarrow\llbracket L_1\rrbracket_\alpha$ \cite{ORS2007}. Likewise, changing the orientation of a crossing of $L_0$ or $L_1$ leaves the chain complexes for $L_0$ and $L_1$ and the chain map $\pm\Phi$ unchanged, provided one adjusts the sign assignments accordingly. Finally, the definition of $\pm\Phi$ is compatible with changes of the sign assignments for $L_0$ or $L_1$. Indeed, such changes have the same effect as pre- or postcomposing $\pm\Phi$ with one of the isomorphisms from Lemma~\ref{lem:3:signinv}.

If a saddle cobordism merges two components of the link diagram $L_0$ (viewed as a 4-valent graph), we can use the same sign assignments for $L_0$ and $L_1$. We then do not need to pass to the odd Khovanov cube of $L'$ to construct the map $\Phi$. Instead, $\Phi$ is given by the same saddle cobordisms $\Phi_\alpha$ as in the general case, but without any signs.

If a saddle cobordism splits a component of the diagram $L_0$ into two, then, as in the merge case, the map $\Phi$ can be made more explicit. The $\Phi_\alpha$ themselves are the same as in the general case, but we can define the signs explicitly by using the term $(-1)^{S(L_0,\alpha)}$ that we used for a death cobordism.

\subsubsection{Reidemeister Type Cobordisms}

The remaining three basic link cobordisms are those associated with Reidemeister moves.  We will assign the chain maps used by Putyra \cite{Pu2015} in his proof of the invariance for his general construction, specialized to odd Khovanov Homology to Reidemeister type cobordisms.  To a Reidemeister I move we assign the chain map in diagram \eqref{eq:3:RImap}, and to a Reidemeister II move we assign the chain map in diagram \eqref{eq:3:RIImap}.  For a Reidemeister III cobordism, the maps are those that are induced by the homotopy equivalences between the cones in equations \eqref{eq:3:conesL} and \eqref{eq:3:conesR}, and the homotopy in Lemma \ref{lem:3:RIIIinv}.  While not the full chain map, the following diagram gives the reader the tools to build the final chain map.

\begin{equation}
    \includeFig{S03F009}
\end{equation}

\section{The Functoriality of Odd Khovanov Homology up to Sign}

We now have all the elements in place to prove the main theorem of the dissertation.

\begin{theorem}\label{thr:OKHfunctor}
    Odd Khovanov homology extends to a functor from the category $\cobIV$ to the category $\cobIIIKbMpm$.
\end{theorem}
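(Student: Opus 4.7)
The plan is to invoke Carter and Saito's classification: two link cobordism movies represent ambient isotopic cobordisms if and only if they differ by the fifteen movie moves of \cite{Bn2005} together with chronological movie moves and planar ambient isotopies. It therefore suffices to check that for each such generator of the equivalence, the induced chain maps on the odd Khovanov brackets agree up to overall sign and chain homotopy. The elementary chain maps have already been defined in Section 3 and are canonical up to sign by the uniqueness principle of Lemma~\ref{lem:3:uniqueness}, so their composition along any movie is well-defined up to sign; the content of the theorem is the invariance under the relations above.

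First I would handle the chronological movie moves and planar ambient isotopy cobordisms. Swapping the order of distant critical events in a four-dimensional cobordism acts on each planar resolution by one of the commutation or anticommutation relations already built into $\cobIII$: even--anything swaps are equalities, odd--odd swaps introduce a sign, and the exceptional type Y configurations either commute or are annihilated by $2$. The sign corrections $(-1)^{\deg(\alpha)}$ and $(-1)^{S(L_0,\alpha)}$ appearing in the chain maps assigned to saddles, deaths, and splits are designed exactly so that these local signs collapse into a single overall sign that is independent of the resolution $\alpha$. For planar ambient isotopies the induced maps are literally planar isotopies on each resolution, and the orientation reversal relations \eqref{eq:deathSignChange}, \eqref{eq:splitSignChange} together with Convention~\ref{conv:2:orientations} guarantee compatibility with our conventions.

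Next, following the organization of \cite{Bn2005}, I would dispatch the fifteen movie moves in three types. Type I moves are compositions of a Reidemeister move with its undo and follow from the strong deformation retraction property recorded in Lemmas~\ref{lem:3:RIinv}--\ref{lem:3:RIIinv} and the Reidemeister III homotopy equivalence of Lemma~\ref{lem:3:RIIIinv}. Type II moves involve two Reidemeister moves performed in disjoint regions and reduce, via the chronological analysis of the preceding paragraph, to distant commutation. Type III moves, which mix births and deaths with Reidemeister moves, require explicit cube-of-resolutions computations; here the pruning relation \eqref{eq:annihilationCreation}, together with the $(S)$, $(T)$, $(2H)$, and $(4Tu)$ relations, collapses the candidate cobordisms to canonical forms. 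For each move I would embed both sides into one ambient cube of resolutions, extend a compatible sign assignment using Lemma~\ref{lem:3:CWsubcomplex}, and then identify the two induced chain maps up to an overall sign by Lemma~\ref{lem:3:uniqueness}.

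The hard part will be movie moves $10$, $14$, and $15$, which combine a Reidemeister III with a saddle or a birth. The main obstacle throughout is sign bookkeeping in the odd setting: because the target category is chronological, reordering saddles introduces nontrivial signs, and the type Y exceptional configuration must be tracked carefully. For MM15 in particular, Remark~\ref{rem:3:conemaps} has already isolated the relevant cone maps of the Reidemeister III equivalence up to an overall sign, so the remaining task is to align this sign with the rest of the chain map; I expect this to be handled uniformly by choosing a sign assignment on a single large ambient resolution cube that contains both sides of the move and then appealing to Lemma~\ref{lem:3:uniqueness} applied to the restriction of a chain isomorphism to each vertex.
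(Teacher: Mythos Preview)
Your overall strategy---reduce to Carter--Saito and check generators of the equivalence---is correct and matches the paper.  The Type I and MM15 portions are fine.  However, there are two genuine gaps.

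\textbf{Type II movie moves are not distant commutation.}  You write that Type II moves ``involve two Reidemeister moves performed in disjoint regions and reduce \ldots\ to distant commutation.''  This is a misreading of \cite{Bn2005}: movie moves 6--10 are sequences of Reidemeister moves performed in the \emph{same} local region, starting and ending at the same tangle (e.g.\ MM6 passes a curl through a strand via RII--RI--RII--RI; MM10 is a cycle of three RIII moves).  There is no distant commutation to invoke.  The paper handles this family with a dedicated lemma (Lemma~\ref{lem:t2}): decompose the active tangle as a crossingless piece $C$ inside an annular braid $\beta$, conjugate by the Reidemeister II maps that remove $\beta$, and argue that the resulting self-map is supported over the crossingless $C$ and is therefore forced to be $\pm\id$ vertexwise, with the signs propagating along the cube.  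This localization-to-a-crossingless-tangle idea is the heart of the Type II argument and is entirely absent from your proposal.

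\textbf{Commuting two distant Reidemeister moves is not covered by the chronological relations of $\cobIII$.}  Your treatment of chronological movie moves says the odd--odd commutation relations in $\cobIII$ plus the correcting signs $(-1)^{\deg(\alpha)}$, $(-1)^{S(L_0,\alpha)}$ collapse everything to a single overall sign.  This works for births, deaths, and saddles against one another (and the paper checks each pair explicitly), but the chain maps assigned to Reidemeister II and III moves are \emph{sums} of planar cobordisms of mixed parity, and there is no single commutation relation in $\cobIII$ that controls a swap of two such sums.  In the paper this case is reduced back to Lemma~\ref{lem:t2} via the Worm Lemma (Lemma~\ref{lem:worm}), which digs a tunnel between the two Reidemeister disks so that the commutator cobordism again acts on a crossingless-plus-annular-braid tangle.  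Without Lemma~\ref{lem:t2} you have no mechanism for this step either.

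Minor points: MM10 is a Type II move and contains no saddle or birth, and MM14 uses RII, not RIII; you should not list these among the ``hard'' moves mixing RIII with Morse events.
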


In Section 3 we specified what odd Khovanov homology assigns to movies of link cobordisms.  The entirety of this section is devoted to the proof of Theorem \ref{thr:OKHfunctor}, where we show how our construction respects all the possible basic ambient isotopies of $\cobIV$.  We will begin with a discussion of the type I movie moves, followed by a general argument to show that each side of the type II movie moves produces homotopic chain maps.  Next, we will examine type III movie moves with individual arguments in the forward direction, in the reverse direction, and for alternative variants.  Finally, we will show the functoriality of odd Khovanov homology with respect to chronological movie moves. We do not need to spend additional time ensuring our functor respects equivalences of planar ambient isotopy cobordisms.  Such equivalences are equalities in both $\cobIV$ and $\cobIII$, and are thus respected by our functor.

\subsection{Type I Movie Moves}

\begin{figure}[H]
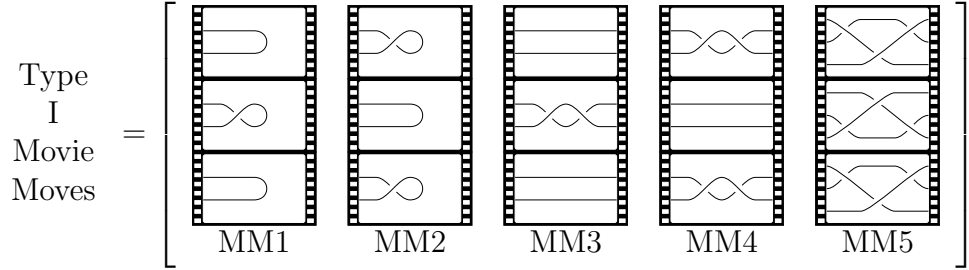

    \centering
    \begin{tabular}{ c }
        Type \\
        I \\ 
        Movie \\
        Moves
    \end{tabular}
    \(= \left[
    \addstackgap[0.25em]{\begin{tabular}{c}\includeMov{S04-1F001}\\MM1\end{tabular}}
    \begin{tabular}{c}\includeMov{S04-1F002}\\MM2\end{tabular} 
    \begin{tabular}{c}\includeMov{S04-1F003}\\MM3\end{tabular} 
    \begin{tabular}{c}\includeMov{S04-1F004}\\MM4\end{tabular} 
    \begin{tabular}{c}\includeMov{S04-1F005}\\MM5\end{tabular} 
    \right]\)
    \caption{Movie moves 1 through 5}
    \label{fig:4:TIMM}
\end{figure}

The left-hand sides of the first five movie moves correspond with doing and undoing a Reidemeister move, while the right-hand sides (which are not shown in Figure \ref{fig:4:TIMM}) are given by trivial movies of identity cobordisms. Our chain maps for Reidemeister moves are precisely those used by Putyra, but specialized to the odd case \cite{Pu2015}.  The proof of the invariance of odd Khovaonov homology with respect to Reidemeister moves precisely implies that the left-hand sides of these movie moves induce chain maps homotopic to identity.

\subsection{Type II Movie Moves}
 
\begin{figure}[H]
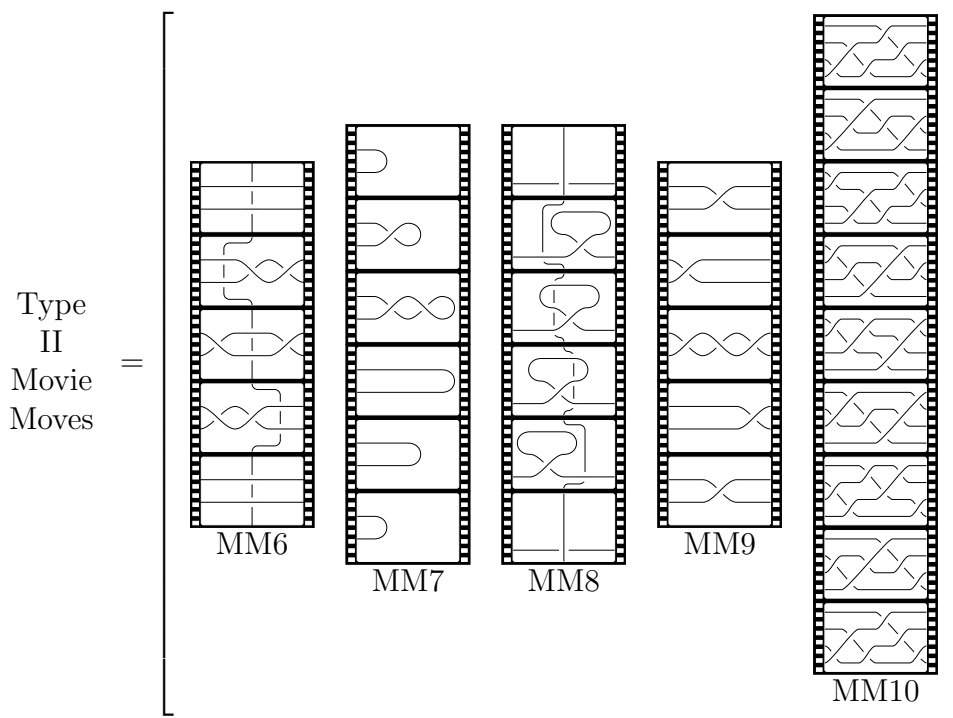

    \centering
    \begin{tabular}{ c }
        Type \\
        II \\ 
        Movie \\
        Moves
    \end{tabular}
    \(= \left[
    \begin{tabular}{c}\includeMov{S04-1F006}\\MM6\end{tabular}
    \begin{tabular}{c}\includeMov{S04-1F007}\\MM7\end{tabular} 
    \begin{tabular}{c}\includeMov{S04-1F008}\\MM8\end{tabular} 
    \begin{tabular}{c}\includeMov{S04-1F009}\\MM9\end{tabular} 
    \addstackgap[0.25em]{\begin{tabular}{c}\includeMov{S04-1F010}\\MM10\end{tabular}} 
    \right]\)
    \caption{Movie moves 6 through 10}
    \label{fig:4:TIIMM}
\end{figure}

Any link on which a movie move is carried out consists of two tangles glued together: namely the inside part $t$ where the movie move is carried out, and an outside part $T$ which is carried through the movie by identity.  Type II movie moves permit a slightly stricter decomposition wherein the inside tangle can be decomposed into a crossingless tangle $C$---with no closed components---and an annular braid $\beta$.\footnote{$\beta$ is an annular braid in the sense that it projects onto an annulus.}  To simplify our pictures, we will slice our annular region in half; an example of this entire decomposition is shown in Figure \ref{fig:4:2:TIIMMBraidDecomposition}.

The left-hand movie of a type II movie move, or a type I movie move for that matter, is a sequence of Reidemeister moves that start and end on identical frames, while the right-hand side is just the identity cobordism.  
\begin{figure}[H]
    \centering
    \includegraphics[width=\textwidth]{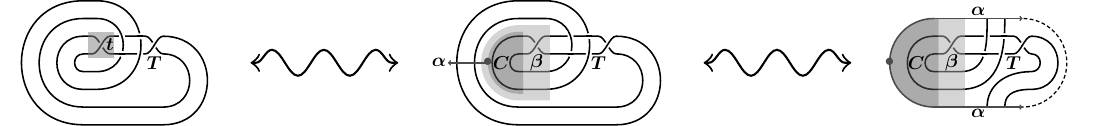}
    \caption{The decomposition of a link diagram into a crossingless tangle $C$---with no closed components---an annular braid $\beta$, and an outside part $T$}
    \label{fig:4:2:TIIMMBraidDecomposition}
\end{figure}
\begin{lemma}\label{lem:t2}
    Let $F$ be a link cobordism with a link diagram $D$ as both its initial and terminal frame.  Furthermore, suppose $F$ is generated by performing a sequence of Reidemeister moves on a tangle in $D$ that can be decomposed into a crossingless tangle with no closed components and an annular braid surrounding it.  Then the chain map that $F$ induces on the odd Khovanov bracket of the link diagram $D$ is homotopic to $\pm\id$.
\end{lemma}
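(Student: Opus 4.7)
The plan is to show that $\Phi$, the chain map induced by $F$ on $\llbracket D \rrbracket$, agrees with $\pm\id$ up to homotopy by analyzing its action vertex-by-vertex on the resolution cube of $D$ and then invoking a uniqueness principle in the spirit of Lemma~\ref{lem:3:uniqueness}. First I would write $F$ as a composition $F = R_k \circ \cdots \circ R_1$ of elementary Reidemeister-move cobordisms, each supported inside the annular region containing $C$ and $\beta$. By Lemmas~\ref{lem:3:RIinv}, \ref{lem:3:RIIinv}, and \ref{lem:3:RIIIinv} each $R_i$ induces a homotopy equivalence $\Phi_i$ (in fact a strong deformation retraction for types I and II), so the composite $\Phi = \Phi_k \circ \cdots \circ \Phi_1 \colon \llbracket D \rrbracket \to \llbracket D \rrbracket$ is a homotopy equivalence, and the task reduces to identifying its homotopy class.

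Next I would analyze $\Phi$ vertex-by-vertex. Since every crossing of $D$ lies in $\beta$, the resolution $D_\alpha$ is obtained by replacing crossings of $\beta$ by planar resolutions while leaving $T$ and $C$ intact. Accordingly, the underlying cobordism $\varphi_\alpha \colon D_\alpha \to D_\alpha$ splits as the identity cylinder on the contributions from $T$ and $C$, disjoint union with a cobordism $\psi_\alpha$ confined to the annular region. The hypothesis that $C$ has no closed components is crucial here: it ensures that every closed circle possibly appearing in $D_\alpha$ comes from resolving $\beta$ and thus lies entirely in the annulus, where $\psi_\alpha$ can act on it. I would then use an Euler-characteristic and genus count (available because $F$ starts and ends at the same frame, so the saddles introduced by each forward Reidemeister move are cancelled by those of its eventual reversal) together with the 4Tu, sphere, torus, and pruning relations in $\cobIII$ to reduce each $\psi_\alpha$ to a signed multiple of the identity cylinder on $D_\alpha$. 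This forces $\Phi_\alpha = \pm \id_{D_\alpha}$ at every vertex $\alpha$.

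Having pinned $\Phi_\alpha$ down to $\pm \id$ at every vertex, an argument modeled on Lemma~\ref{lem:3:uniqueness}---using that saddle cobordisms in the differential of $\llbracket D \rrbracket$ are not annihilated by $2$ in $\cobIII$---forces the signs at adjacent vertices to be compatible with the sign assignment, so the resulting assignment of signs to vertices must be a coboundary on the resolution cube. This collapses $\Phi$ to a uniform $\pm \id$ on all of $\llbracket D \rrbracket$, up to homotopy, yielding the conclusion. The main obstacle I expect is the vertexwise simplification of $\psi_\alpha$ in $\cobIII$: carefully tracking chronology changes through long Reidemeister compositions (each involving several births, deaths, and saddles) and verifying that no stray genus survives after applying 4Tu, S, T, and pruning. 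Handling the odd sign assignments---and in particular ensuring that the type Y conventions chosen per Proposition~\ref{prop:3:XYinv} do not introduce spurious signs between vertices---will require the most care.
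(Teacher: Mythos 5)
Your proposal misses the central structural idea of the paper's proof and relies on a vertexwise simplification that cannot work as stated. First, a factual slip: the outside tangle $T$ is arbitrary and in general carries crossings, so the resolution cube of $D$ is not governed solely by $\beta$; resolutions of crossings in $T$ enter as well. More seriously, the component $\Phi_\alpha$ at a fixed vertex is a $\ZZ$-linear combination of cobordisms (the Reidemeister II and III chain maps are sums of terms, not single cobordisms), and it is \emph{not} equal to $\pm\id_{D_\alpha}$ at the chain level before any further manipulation; $\Phi$ is only homotopic to $\pm\id$ as a whole. Your plan to reduce each $\psi_\alpha$ to $\pm\id$ by Euler-characteristic and genus counts plus the 4Tu, sphere, torus, and pruning relations therefore tries to prove something false.

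What the paper actually does is conjugate $\Phi$ (via the auxiliary diagram $D'$ obtained by inserting $\beta^{-1}\beta$, and then by a second decomposition that pushes $\beta\beta^{-1}$ onto the boundary of $C$) to a map $\Phi''$ whose underlying link cobordism is localized entirely over the crossingless tangle $C$. It then runs a bigrading argument: the cube of $D$ has a left-degree from crossings in $C$ and a right-degree from crossings in $\beta T$. Since $C$ is crossingless, everything sits in a single left-degree, so the homotopies between $\id$ and $\Phi''(\Phi'')^{-1}$ (which would have left-degree $-1$) must vanish. This upgrades $\Phi''$ from a homotopy equivalence to a genuine chain isomorphism, and only then can one conclude vertexwise that each $\Phi''_\alpha$ is $\pm\id$ (the only automorphisms of $D_\alpha$ supported over a crossingless tangle with no closed components), after which the sign propagates by the commuting-square argument you correctly anticipate. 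Your proposal has no analogue of this conjugation or this degree collapse, and without it the leap from ``homotopy equivalence'' to ``isomorphism at each vertex'' is unjustified. Additionally, the paper must separately establish that the naturality square relating $\Phi$ and $\Phi'$ commutes up to sign and homotopy, which it does by induction on the number of generators of $\beta$ using the sphere relation to kill the birth/death terms in a doing-then-undoing Reidemeister II composition; this step is entirely absent from your outline.
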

\begin{proof}
Let $C$ be the crossingless tangle, $\beta$ the annular braid, and $T$ the arbitary tangle on the outside.  The link cobordism $F$ is the identity cobordism on the tangle $T$, and on the tangle $C\beta$ it is the cobordism $f$ generated by a sequence of Reidemeister moves. Let $\Phi$ be the map that $F$ induces on the odd Khovanov bracket.
\begin{equation}
\includeFig{S04-2F002}
\end{equation}
To prove the lemma, we must show that $\Phi\simeq\pm\id$.  Consider the alternative diagram $D'$ of the same link obtained by gluing the annular braid $\beta^{-1}\beta$ onto $T$, as shown in \eqref{dig:4:proofII1}. The sequence of Reidemeister moves considered previously gives rise to a cobordism $F'$ from $D'$ to $D'$, and this cobordism induces a map $\Phi'$ on the odd Khovanov bracket of $D'$.
\begin{equation}\label{dig:4:proofII1}
\includeFig{S04-2F003}
\end{equation}
We can also consider the cobordism $G$ from $D$ to $D'$, which is the identity cobordism on $C\beta$ and a cobordism $g$ comprised of many Reidemeister II type cobordisms from $T$ to $\beta^{-1}\beta T$.  Let $\Psi$ be the homotopy equivalence that $G$ induces between the odd Khovanov brackets of $D$ and $D'$.
\begin{equation}
\includeFig{S04-2F004}
\end{equation}
The Odd Khovanov bracket induces the following diagram when $F$, $F'$, and $G$ are considered together.
\begin{equation}\label{dig:phiphiprime}
\includeFig{S04-2F005}
\end{equation}
\begin{claim}\label{claim:t21}
The preceding diagram commutes up to sign and homotopy.
\end{claim}
\begin{claim}\label{claim:t22}
$\Phi'\simeq\pm\id$.
\end{claim}
We will wait until the end of the proof to prove the first claim.  It follows from Claim \ref{claim:t21} that to prove Lemma \ref{lem:t2} it is sufficient to prove Claim \ref{claim:t22}. \par

For this, we consider an alternative decomposition of $D'$ in which the tangle $\beta\beta^{-1}$ is glued onto the perimeter of $C$ as shown in the bottom half of \eqref{dig:4:proofII2}. Let $G'$ be the link cobordism from $D$ to $D'$ which is given by the identity cobordism of $\beta T$ and a cobordism $g'$ comprised of many Reidemeister II type cobordisms from $C$ to $C\beta\beta^{-1}$.

The link cobordism $G'$ induces a homotopy equivalence $\Psi'$ between odd Khovanov brackets:

\begin{equation}\label{dig:4:proofII2}
\includeFig{S04-2F006}
\end{equation}
Now consider the following automorphism of $\llbracket D\rrbracket$ where $\left(\Psi'\right)^{-1}$ is the homotopy inverse of $\Psi'$:
\begin{equation}
    \Phi''\coloneqq\left(\Psi'\right)^{-1}\circ\Phi'\circ\Psi'
\end{equation}
By the definition of $\Phi''$, the following diagram commutes up to homotopy:
\begin{equation}
\includeFig{S04-2F007}
\end{equation}
\begin{claim}\label{claim:t23}
$\Phi''=\pm\id$.
\end{claim}
The commutativity of the preceding diagram up to homotopy implies that to prove Claim \ref{claim:t22} it is sufficient to prove Claim \ref{claim:t23}.\par
To prove Claim \ref{claim:t23}, we will use that $\Phi''$ is, in a sense localized to the left-hand tangle $C$ in the decomposition of $D$.  More precisely, there is a natural cobordism $F''$---that induces $\Phi''$ on the odd Khovanov bracket---given by composing $G'$ played in reverse with $F'$ and $G'$.  

\begin{equation}
\includeFig{S04-2F008}
\end{equation}
Now note that the link cobordism $F''$ acts entirely on the crossingless tangle with no closed components, $C$.  Let $\Phi''_\alpha$ be the component of the map induced by $F''$ on the odd Khovanov cube of $D$ mapping from the $\alpha$ vertex of the odd Khovanov cube.  Furthermore, we can think of the odd Khovanov cube of $D$ as doubly graded with a left degree arising from the number of 1-resolved crossings in the tangle $C$, and the right degree arising from the number of 1-resolved crossings in the tangle $\beta T$. We can make the following conclusions about $F''$, the $\Phi_\alpha''$s, and $\Phi''$.
\begin{itemize}
    \item Any two individual maps $\Phi''_\alpha$ and $\Phi''_\beta$ differ only in the surface above $C$.
    \item As $F''$ is the identity cobordism on $\beta T$, $\Phi''$ preserves the cubical structure arising from the tangle $\beta T$.  That is prior to flattening, $\Phi''$ is an endomorphism of the odd Khovanov cube generated by resolving the crossings in $\beta T$ and has 0 right degree
    \item As $F''$ is a composition of Reidemeister type cobordisms, and the chain maps associated with Reidemeister moves are homotopy equivalences, $\Phi''$ is a homotopy equivalence.
    \item As $F''$ acts entirely on the left tangle, the homotopies between the identity map and the two possible compositions of $\Phi''$ with its homotopy inverse $(\Phi'')^{-1}$ have left degree -1, and right degree 0.
    \item As $C$ is a crossingless tangle, it does not contribute to the cubical structure of the odd Khovanov cube of $D$.  This implies that the entire cubical structure of $D$ arises from $\beta T$.  In turn, as $\Phi''$ preserves the cubical structure of $\beta T$, it must also preserve the cubical structure arising from all of $D$.
    \item It follows that the degree of the vertices is their right degree, and that they are all supported in the same left degree.
    \item It follows that the homotopies between the identity map and the two possible compositions of $\Phi''$ with $(\Phi'')^{-1}$ must be zero maps.
    \item It follows that $\Phi''$ is a chain isomorphism.
    \item It follows that each $\Phi''_\alpha$ is an isomorphism.
    \item The only automorphisms of $D_\alpha$ which are given by a cobordism that is identity except possibly over a crossingless tangle with no closed components, are $\pm\id$. Thus, each $\Phi''_\alpha=\pm\id$.
\end{itemize}
We are left to show that either $\Phi''_\alpha=\id$ or $\Phi''_\alpha=-\id$ for all $\alpha$ uniformly.  Consider a pair of indices $\alpha$ and $\beta$ that differ in a single place, such that $\deg(\beta)=\deg(\alpha)+1$.  We have the following diagram where $\partial$ is the differential in the odd Khovanov bracket.
\begin{equation}
\includeFig{S04-2F009}
\end{equation}
The preceding diagram must commute as $\Phi''$ is a chain map, thus $\Phi''_\alpha$ and $\Phi''_\beta$ have the same sign.  It follows that the sign $\Phi''_\zeta$ propagates across the entire cube, and thus $\Phi''=\pm\id$.\\
All that is left is to prove Claim \ref{claim:t21}.  We are going to show that the diagram commutes up to homotopy and sign via induction on the number of generators of $\beta$.  Let $\beta_n$ be the annular braid formed by the first $n$ crossings from $\beta$.  Let $D'_n$ and $\Phi'_n$ be the diagrams and maps defined, respectively, below. 
\begin{equation}
\includeFig{S04-2F010}
\end{equation}
For a fixed $n$ less than the number of crossings in $\beta$, we have the following diagram.
\begin{equation}\label{eq:4:RIIcommute}
\includeFig{S04-2F011}
\end{equation}
Consider the following map where we travel around the diagram clockwise:
\begin{equation}
\includeFig{S04-2F012}
\end{equation}
The cobordism on the right-hand side features doing and then undoing a Reidemeister II move. An examination of the chain maps associated with Reidemeister II cobordisms shows that they are given by a sum of an identity cobordism and a cobordism containing a saddle and a birth.  By immediately following up a Reidemeister II move with its inverse, we cap off the birth with a death.  This results in a sphere that kills the cobordism (by the sphere relation from \eqref{eq:2:SandTrelations}), leaving behind only a term consisting of composed identities.  Thus, the diagram in \eqref{eq:4:RIIcommute} commutes up to sign and homotopy and, in turn, claim \ref{claim:t21} holds.

Note that in this argument, we used that the maps which we assigned to each individual Reidemeister move are canonical up to an overall sign. In particular, we can assume that the map $\Phi'_{n+1}$ from \eqref{eq:4:RIIcommute} restricts to $\Phi'_n$ on the codimension 2 subcube of the cube of $D'_{n+1}$ that corresponds to replacing both crossings in $\sigma^{-1}\sigma$ by their braidlike resolution.
\end{proof}

It is an immediate consequence of Lemma \ref{lem:t2} that the maps induced by link cobordisms are preserved under type II movie moves up to an overall sign.

\subsection{Type III Movie Moves}

The last five movie moves involve births or deaths in addition to Reidemeister type cobordisms.  Additionally, the movie moves viewed either forward or in reverse must be treated as independent moves.  
\begin{convention} The following conventions will be used throughout the duration of proving the invariance of odd Khovanov homology with respect to movie moves 11 through 15.
    \begin{enumerate}[label=\textnormal{\bf{\alph*.}}]
        \item The initial frame of a movie is the $0$\ordth frame.
        \item Specific diagrams can be referenced by $D_{i,\alpha}$, or the $\alpha$\ordth diagram of the $i$\ordth frame of the cobordism.
        \item $F$ denotes the entire link cobordism, and $F_i$ denotes the link cobordism from the $i$\ordth frame of the movie to the $(i+1)$\ordst.
        \item For intra-frame maps, $d_{i,{\star}\alpha}$, $\epsilon_{i,{\star}\alpha}$, and $\partial_{i,{\star}\alpha}\coloneqq\epsilon_{i,{\star}\alpha}d_{i,{\star}\alpha}$ denote the unsigned planar cobordism, the sign, and the final map from the $0\alpha$ vertex of the cube of resolutions of the $i$\ordth frame of the movie to the $1\alpha$ vertex of the same frame respectively.
        \item For inter-frame maps, $\varphi_{i,\alpha}^{i+1,\beta}$, $\epsilon_{i,\alpha}^{i+1,\beta}$, and $\Phi_{i,\alpha}^{i+1,\beta}\coloneqq\epsilon_{i,\alpha}^{i+1,\beta}\varphi_{i,\alpha}^{i+1,\beta}$ denote the unsigned planar cobordism, the sign, and final map from the $\alpha$ vertex of the cube of resolutions of the $i$\ordth frame of the movie to the $\beta$ vertex of the next frame respectively.  We will allow $\star$s to appear in the intra-frame index if we are referring to a particular collection of the inter-frame maps.
        \item Within a movie move, an arrow can be affixed to any of the following symbols $\mathbin{\Diamond}\in\{D,F,d,\epsilon,\partial,\varphi,\Phi\}$
        to denote if it comes from the left movie $\overleftarrow{\mathbin{\Diamond}}$ or the right movie $\overrightarrow{\mathbin{\Diamond}}$.
    \end{enumerate}
\end{convention}

An immediate consequence of Lemmas \ref{lem:3:signinv} and \ref{lem:3:orientinv} is the following.

\begin{corollary}
To show odd Khovanov homology is invariant with respect to a particular movie move, it is sufficient to show invariance with respect to a particular orientation on the crossings and particular valid sign assignments.
\end{corollary}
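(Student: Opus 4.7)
The plan is to reduce the corollary to the naturality of the Reidemeister and elementary-cobordism chain maps that was already noted throughout Section 3. Fix a movie move, with left and right movies $\overleftarrow{F}$ and $\overrightarrow{F}$ sharing initial and terminal link diagrams $L_0$ and $L_1$. Given any two choices $(o_i,\epsilon_i)$ and $(o_i',\epsilon_i')$ of crossing orientations and valid sign assignments on $L_i$ ($i=0,1$), Lemmas~\ref{lem:3:signinv} and \ref{lem:3:orientinv} produce chain isomorphisms $h_i\colon\llbracket L_i\rrbracket_{(o_i,\epsilon_i)}\xrightarrow{\sim}\llbracket L_i\rrbracket_{(o_i',\epsilon_i')}$, and by Lemma~\ref{lem:3:uniqueness} each $h_i$ is canonical up to an overall sign.

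The next step is to verify that the chain maps $\overleftarrow{\Phi}$ and $\overrightarrow{\Phi}$ induced by the two movies transform by conjugation under these changes. For that, I would revisit the construction of $\Phi$ frame by frame: birth, death, saddle, and Reidemeister cobordisms. In each case, Section~3.4 already observes that the induced map is canonical up to sign and that it is natural with respect to changes of sign assignments and crossing orientations (e.g.\ the paragraph after the saddle construction, and the remark after Lemma~\ref{lem:3:RIIIinv} on the homotopy equivalences being natural in the sign data). Composing these frame-by-frame naturalities, one obtains that, up to an overall sign,
\begin{equation}
    \overleftarrow{\Phi}_{(o',\epsilon')}=h_1\circ\overleftarrow{\Phi}_{(o,\epsilon)}\circ h_0^{-1}
    \quad\text{and}\quad
    \overrightarrow{\Phi}_{(o',\epsilon')}=h_1\circ\overrightarrow{\Phi}_{(o,\epsilon)}\circ h_0^{-1}.
\end{equation}

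Now suppose invariance under the movie move is established for the particular choice $(o,\epsilon)$, that is, there exists a chain homotopy $H$ with $\overleftarrow{\Phi}_{(o,\epsilon)}\simeq\pm\overrightarrow{\Phi}_{(o,\epsilon)}$. Conjugating $H$ by $h_0$ and $h_1$ yields a chain homotopy $h_1\circ H\circ h_0^{-1}$ realizing $\overleftarrow{\Phi}_{(o',\epsilon')}\simeq\pm\overrightarrow{\Phi}_{(o',\epsilon')}$, which is exactly invariance under the movie move for the new data. Since the orientations and sign assignments on $L_0$ and $L_1$ were arbitrary, invariance in $\cobIIIKbMpm$ follows from invariance for any single choice.

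The only genuinely subtle point is the naturality statement in the middle paragraph: one must keep track that the sign ambiguities in $h_0$, $h_1$, $\overleftarrow{\Phi}$ and $\overrightarrow{\Phi}$ all combine into a single overall sign, rather than interact destructively. This is where Lemma~\ref{lem:3:uniqueness} is essential—it guarantees that the isomorphisms coming from changes of sign assignment along each individual frame and each elementary cobordism are the unique ones (up to $\pm$) that restrict to $\pm\id_{D_\alpha}$ on vertices, so the compositions are forced to agree with the frame-by-frame isomorphisms. I do not expect this to be a real obstacle, since the relevant naturality has already been spelled out in Section~3; the main content of the corollary is simply to package those observations into a statement usable in the type III movie move arguments that follow.
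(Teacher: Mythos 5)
Your proposal is correct and follows essentially the same approach as the paper's own proof: both reduce the claim to the observation that changing sign assignments corresponds to pre- and postcomposing with the isomorphisms from Lemma~\ref{lem:3:signinv}, and that changing crossing orientations leaves everything unchanged once the sign assignments are adjusted accordingly. The paper states this more tersely, while you make the conjugation of the chain homotopy explicit and invoke Lemma~\ref{lem:3:uniqueness} to control the interaction of the several sign ambiguities, but the underlying argument is the same.
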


\begin{proof}
This follows because changing the sign assignment in the source or target complex has the same effect as pre- or postcomposing the chain map induced by a movie with the corresponding isomorphism from Lemma \ref{lem:3:signinv}. Moreover, changing the orientation of a crossing does not change the source or target complex, or the induced chain map, as long as one changes the sign assignments accordingly.
\end{proof}

\subsubsection{Movie Move 11}

\begin{figure}[H]
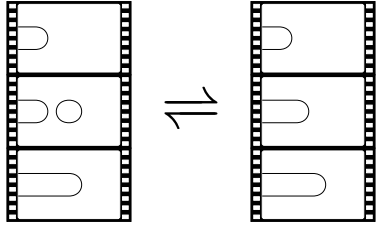

    \centering
    \includeFig{S04-3F100}
    \caption{Movie move 11}
    \label{fig:4:MM11}
\end{figure}

Movie move 11 is built from the following elements:
\begin{itemize}
    \item A split/merge cobordism
    \item A death/birth cobordism
\end{itemize}
There is a split in the forward direction, so we will orient it in the canonical direction towards the reader. In the forward and reverse directions, movie move 11 is precisely a \enquote{pruning} relation.  From equation \eqref{eq:annihilationCreation}, the birth followed by a merge relation is an isotopy while the split followed by a death relation may incur an overall sign depending on how the orientations placed on the death and split are related to one another, thus for all $\alpha$, $\overleftarrow{\varphi}_{1,\alpha}^{2,\alpha}\overleftarrow{\varphi}_{0,\alpha}^{1,\alpha}=\pm\overrightarrow{\varphi}_{1,\alpha}^{2,\alpha}\overrightarrow{\varphi}_{0,\alpha}^{1,\alpha}$.  Furthermore, the sign is determined by the orientation placed on the planar cobordism. This is a global decision, thus $\overleftarrow{\Phi}=\pm\overrightarrow{\Phi}$.

\subsubsection{Movie Move 12}

\begin{figure}[H]
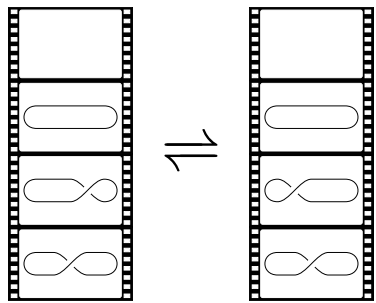

    \centering
    \includeFig{S04-3F200}
    \caption{Movie move 12}
    \label{fig:4:MM12}
\end{figure}

Movie move 12 is built from the following elements:
\begin{itemize}
    \item A Reidemeister I cobordism
    \item A death/birth cobordism
\end{itemize}

\paragraph{Movie Move 12 Forward:}\phantom{.}

{\noindent}The left and right sides differ by what side of the circle the Reidemeister I move is carried out on.  In the forward direction, the orientations on the crossings will not be relevant, and we will choose the orientations as in equation \eqref{eq:4:MM12orientation} now for when we need them later in the reverse direction.
\begin{equation}
    \includeTang{S04-3F207}
    \label{eq:4:MM12orientation}
\end{equation}
Diagram \eqref{dig:MM12FL} shows the left-hand side of movie move 12 in the forward direction and diagram \eqref{dig:MM12FR} shows the right-hand side of movie move 12 in the forward direction.

\begin{equation}
    \label{dig:MM12FL}
    \includeFig{S04-3F201}
\end{equation}
\begin{equation}
    \includeFig{S04-3F202}
    \label{dig:MM12FR}
\end{equation}

All of the signs appear squared, thus, for each $\alpha$ we have the following equation:
\begin{equation}
    \includeFig{S04-3F203}
\end{equation}
It follows for the entire chain maps that $\overleftarrow{\Phi}=-\overrightarrow{\Phi}$.

\paragraph{Movie Move 12 Reverse:}\phantom{.}

{\noindent}Diagram \eqref{dig:MM12RL} shows the left-hand side of movie move 12 in the reverse direction and diagram \eqref{dig:MM12RR} shows the right-hand side of movie move 12 in the reverse direction.

\begin{equation}
    \label{dig:MM12RL}
    \includeFig{S04-3F204}
\end{equation}
\begin{equation}
    \includeFig{S04-3F205}
    \label{dig:MM12RR}
\end{equation}

The orientation on the crossing endows the splits on both sides with canonical orientations on splits. Note that $\overleftarrow{D}_{1,\alpha}$ and $\overrightarrow{D}_{1,\alpha}$ are isotopic and thus produce the same value on the $S$-map.  Thus, by employing the (2H) relation and the variant of the (4Tu) relation in equation \eqref{eq:A:11}, for each $\alpha$ we have the following equation:
\begin{equation}
    \hspace{-0.5em}\includeFig{S04-3F206}
\end{equation}
It follows for the entire chain maps that $\overleftarrow{\Phi}=\overrightarrow{\Phi}$.

\paragraph{Movie Move 12 Alternative Variant:}\phantom{.}

{\noindent}There is an alternative version of movie move 12 in which the crossing is negative as opposed to the positive one presented.  In that case, for both the forward and reverse directions, the chain map assigned to the Reidemeister I move is supported over the 1-resolution of the crossing. The computations are almost the same as for the right-handed crossing, except that now all planar cobordisms are flipped upside down, and the roles of the forward and reverse directions are exchanged.

\subsubsection{Movie Move 13}

\begin{figure}[H]
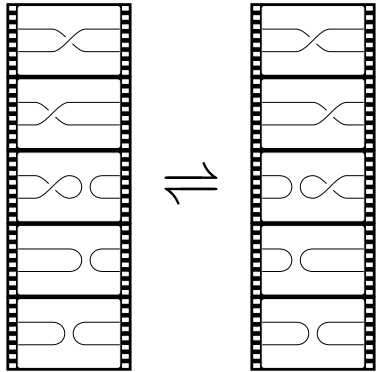

    \centering
    \includeFig{S04-3F300}
    \caption{Movie move 13}
    \label{fig:4:MM13}
\end{figure}

Movie move 13 is built from the following elements:
\begin{itemize}
    \item A Reidemeister I cobordism
    \item A saddle cobordism
\end{itemize}

\paragraph{Movie Move 13 Forward:}\phantom{.}

{\noindent}The resolution of the right crossing of the larger link that appears in equation \eqref{fig:MM13FEQ} is present in the transition from the middle to its subsequent frame in the left side of the movie move.  For the right side of the movie move, the corresponding frames also present a resolution of the same larger link, but now of the left crossing.

\begin{equation}
    \label{fig:MM13FEQ}
    \includeFig{S04-3F301}
\end{equation}

We will orient the crossings as in the following diagram. This produces canonical orientations on all saddles.

\begin{equation}
    \includeTang{S04-3F307}
\end{equation}

Consider the odd Khovanov cube of the larger link in diagram \eqref{fig:MM13Cube} with signs $\psi_1$, $\psi_2$, $\psi_3$, and $\psi_4$ on the edges.  Instead of invoking the existence of maps to build the saddle chain map for the movie move, we will directly construct valid sign assignments for each side from the cube of the larger link, so that the vertical maps (which correspond to the saddle chain maps) have nice relations, and the horizontal sign assignments are the same where the horizontal sign assignments correspond to the assignments on the source and the target of the saddle chain maps.

\begin{equation}
    \includeFig{S04-3F308}
    \label{fig:MM13Cube}
\end{equation}

Consider the square corresponding to the transition from the second to the third frame of the movie.  The diagrams in corresponding vertices of the left-hand and right-hand cubes are ambient isotopic, so we can endow them with identical internal sign assignments. Here, we use the convention that among the two vertices in the middle column of \eqref{fig:MM13Cube}, the top vertex in the right-hand cube corresponds to the bottom vertex in the left-hand cube, and vice versa.

Since the left pair of maps in \eqref{fig:MM13Cube} are both merges, the faces that can occur by pairing one of these maps with a map from an outside crossing will be either type i, ii, iv, or v. This means they are commuting faces, and thus they can share valid sign assignments.

The second pair of maps in \eqref{fig:MM13Cube} are built from essentially the same saddle cobordisms. If $\psi^1_\alpha,\psi^2_\alpha,\psi^3_\alpha,\psi^4_\alpha$ are part of a valid sign assignment for the left-hand cube, it thus follows that exchanging $\psi^1_\alpha$ with $\psi^2_\alpha$ and $\psi^3_\alpha$ with $\psi^4_\alpha$ gives rise to a valid sign assignment for the right-hand cube.

Note also that in the latter cube the roles of the vertical and horizontal maps in \eqref{fig:MM13Cube} are reversed. Therefore, we can make the following assignments where we sprinkled in terms involving $\deg(\alpha)$ to ensure that the vertical maps commute with the differentials:

\begin{equation}
\begin{array}{c}
\overleftarrow{\epsilon}_{1,{\star}\alpha}=\overrightarrow{\epsilon}_{1,{\star}\alpha}=\psi_\alpha^1\\
\overleftarrow{\epsilon}_{1,0\alpha}^{2,0\alpha}=\overrightarrow{\epsilon}_{1,0\alpha}^{2,0\alpha}=(-1)^\alpha\psi_\alpha^2\\
\overleftarrow{\epsilon}_{1,1\alpha}^{2,1\alpha}=\overrightarrow{\epsilon}_{1,1\alpha}^{2,1\alpha}=(-1)^{\alpha+1}\psi_\alpha^3\\
\overleftarrow{\epsilon}_{2,{\star}\alpha}=\overrightarrow{\epsilon}_{2,{\star}\alpha}=\psi_\alpha^4
\end{array}
\end{equation}

We arrive at the following diagrams.

\begin{equation}
    \includeFig{S04-3F302}
    \label{fig:MM13FL}
\end{equation}

\begin{equation}
    \includeFig{S04-3F303}
    \label{fig:MM13FR}
\end{equation}

As in movie move 12 it is the case that $\overleftarrow{D}_{0,\alpha}$ and $\overrightarrow{D}_{0,\alpha}$ are isotopic and thus produce the same values on the $S$-map. Using the (4Tu) and (2H) relations, we arrive at the following equation for a fixed degree $\alpha$

\begin{equation}
    \hspace{-0.125in}\includeFig{S04-3F309}\hspace{-0.25in}
\end{equation}

where

\begin{equation}
    \includeFig{S04-3F314}
\end{equation}

and

\begin{equation}
    \includeFig{S04-3F315}
\end{equation}

We can define the overall sign assignment as we did for the fixed degree $\alpha$, such that $\overleftarrow{\Phi}=-\overrightarrow{\Phi}$.

\paragraph{Movie Move 13 Reverse:}\phantom{.}

{\noindent}In the reverse direction, we can still view the left and right movies as portions of larger link diagrams, but not the same link diagram:

\begin{equation}
    \includeFig{S04-3F304}
    \label{fig:MM13REQ}
\end{equation}

We will use the following crossing orientation so that both sides of the movie feature a type x face and the split cobordisms have canonical orientations.

\begin{equation}
    \includeFig{S04-3F310}
\end{equation}

As in the forward direction, corresponding vertices feature ambient isotopic diagrams. So we can share internal sign assignments between them.  We will construct a valid sign assignment for the right-hand side using the sign assignment from the left-hand side. 

The cobordisms $\overleftarrow{\varphi}_{0,{\star}\alpha}$ and $\overrightarrow{\varphi}_{0,{\star}\alpha}$ in the bottom rows of \eqref{fig:MM13RL} and \eqref{fig:MM13RR} are essentially the same. Likewise, $\overleftarrow{\varphi}_{0,1\alpha}^{1,1\alpha}$ and $\overrightarrow{\varphi}_{0,1\alpha}^{1,1\alpha}$ are essentially the same.  

The cobordisms $\overleftarrow{\varphi}_{1,{\star}\alpha}$ and $\overrightarrow{\varphi}_{1,{\star}\alpha}$ are not identical, but together with maps from external crossings, they form faces that are either type i, ii, iv, or v, and are thus always commuting, and therefore can share sign assignments.  Similarly, in relation to maps that come from a given external crossing, $\overleftarrow{\varphi}_{1,{\star}\alpha}$ and $\overrightarrow{\varphi}_{1,{\star}\alpha}$ are either both commuting---forming a type iv or v face--or both anticommuting, forming a type vii or viii face.

\begin{equation}
    \includeFig{S04-3F305}
    \label{fig:MM13RL}
\end{equation}

\begin{equation}
    \includeFig{S04-3F306}
    \label{fig:MM13RR}
\end{equation}

As all the maps of the square on the left are either essentially identical to their corresponding maps on the right, or at least induce the same sign on faces formed with maps from external crossings, we can build a valid sign assignment on the right-hand side with the same signs.  That is, we can use the following sign assignment for the right-hand side of the move.

\begin{equation}
\begin{split}
\overrightarrow{\epsilon}_{1,{\star}\alpha} &= \overleftarrow{\epsilon}_{1,{\star}\alpha} \\
\overrightarrow{\epsilon}_{1,0\alpha}^{2,0\alpha} &= \overleftarrow{\epsilon}_{1,0\alpha}^{2,0\alpha} \\
\overrightarrow{\epsilon}_{1,1\alpha}^{2,1\alpha} &= \overleftarrow{\epsilon}_{1,1\alpha}^{2,1\alpha} \\
\overrightarrow{\epsilon}_{2,{\star}\alpha} &= \overleftarrow{\epsilon}_{2,{\star}\alpha} \\
\end{split}
\end{equation}

This in turn yields the following equation.

\begin{equation}
    \includeFig{S04-3F311}
\end{equation}

The sign assignment propagates to the entire cube, thus $\overleftarrow{\Phi}=-\overrightarrow{\Phi}$.

\paragraph{Movie Move 13 Alternative Variants:}\phantom{.}

{\noindent}There are additional variants of this move that must be considered.  The move is built off of a saddle and a Reidemeister I move, so we need to consider the movie moves shown in \eqref{fig:MM13AF} and \eqref{fig:MM13AR}, where negative crossings appear in place of the positive crossings.  
\begin{equation}
    \includeFig{S04-3F312}
    \label{fig:MM13AF}
\end{equation}
\begin{equation}
    \includeFig{S04-3F313}
    \label{fig:MM13AR}
\end{equation}
The arguments will still work as before, but now the maps will be supported in degree one, not zero.  In the forward direction, the larger crossings resemble the two versions of the Reidemeister II move, which were present in the original reverse direction, therefore we use a similar argument to the original reverse direction.  Likewise, in the reverse direction, the larger link closely resembles that from the original forward direction, but with opposite crossings. The argument thus follows that of the original, forward direction.

\subsubsection{Movie Move 14}

\begin{figure}[H]
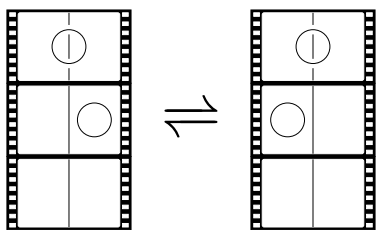

    \centering
    \includeFig{S04-3F400}
    \caption{Movie move 14}
    \label{fig:4:MM14}
\end{figure}

Movie move 14 is built from the following elements:
\begin{itemize}
    \item A birth/death cobordism.
    \item A Reidemeister II cobordism.
\end{itemize}
For our computations, we will use a version of this move where all frames are rotated $90^\circ$ clockwise.

\paragraph{Movie Move 14 Forward:}\phantom{.}

{\noindent}The final diagram of both sides decomposes into the following square with the same shared sign assignment on both sides of the move:
\begin{equation}
    \includeFig{S04-3F403}
    \label{fig:MM14topsquare}
\end{equation}
We will endow the crossings with the following orientations so that the top square forms a type x face.
\begin{equation}
    \includeTang{S04-3F407}
    \label{dig:MM14squareOrient}
\end{equation}
We will further equip the two vertices in the middle of \eqref{fig:MM14topsquare} with the same internal sign assignments. Note that this is possible because of Lemma~\ref{lem:3:CWsubcomplex}, and because the diagrams corresponding to these vertices represent ambient isotopic tangles up to changing the location of the trivial circle.

As each side of the move begins with no crossings in the tangle, the chain maps assigned to the two sides of the move are supported only in the zero degree.  We will consider the diagrams of each side of the move restricted to the relevant degree.

\begin{equation}
    \includeFig{S04-3F401}
    \label{fig:MM14FL}
\end{equation}

\begin{equation}
    \includeFig{S04-3F402}
    \label{fig:MM1FR4}
\end{equation}

Using our orientation conventions, the Reidemeister II chain maps naturally induce the canonical orientation on the saddle on the left-hand side of the move, while they generate a non-canonically oriented saddle on the right-hand side.  We can correct this orientation on the saddle at no cost, as the saddle is always a merge cobordism.  Note that the two supporting maps in each diagram simplify in the following manner.

\vspace{1em}
{\noindent}\begin{minipage}{0.5\textwidth}
    \begin{equation}
    \includeFig{S04-3F404}
    \end{equation}
\end{minipage}%
\begin{minipage}{0.5\textwidth}
    \begin{equation}
    \includeFig{S04-3F405}
    \end{equation}
\end{minipage}
\vspace{1em}

Thus, for each $\alpha$ we have the following equation.

\begin{equation}
    \includeFig{S04-3F406}
\end{equation}

We still need the maps to be consistently equal or negative of one another. Our goal will be to show that we can choose
\begin{equation}
\begin{split}
    \epsilon_{2,0{\star}\alpha}&=\epsilon_{2,{\star}0\alpha} \\
    \epsilon_{2,1{\star}\alpha}&=\epsilon_{2,{\star}1\alpha}  
\end{split}
\label{eqn:foursigns}
\end{equation}
for all $\alpha$, so that $\overleftarrow{\Phi}=-\overrightarrow{\Phi}$.

To see this, we first note that the two right maps in \eqref{fig:MM14topsquare} can share sign assignments because they are always merges, and thus form commuting faces with maps coming from external crossings. For similar reasons, the two left maps in \eqref{fig:MM14topsquare} can share sign assignments because they are always splits. Finally, the equations in \eqref{eqn:foursigns} imply that for any fixed $\alpha$, the four signs multiply to $+1$, which is consistent with the fact that the square in \eqref{fig:MM14topsquare} is a type x face and thus anticommutes.

\paragraph{Movie Move 14 Reverse:}\phantom{.}

{\noindent}In the reverse direction, an almost identical argument applies, but there is one additional consideration.

\vspace{1em}
{\noindent}\begin{minipage}{0.5\textwidth}
    \begin{equation}
    \includeFig{S04-3F408}
    \end{equation}
\end{minipage}%
\begin{minipage}{0.5\textwidth}
    \begin{equation}
    \includeFig{S04-3F409}
    \end{equation}
\end{minipage}
\vspace{1em}

In the above equations, the simplifications involve eliminating a split followed by a death, which can induce a sign.  These signs do not impose any issues for our argument as a sign is incurred on both sides of the move.  All other parts of the argument in the forward direction apply to the reverse direction.

\paragraph{Movie Move 14 Alternative Variant:}\phantom{.}

{\noindent}There is an additional variant of movie move 14 in which the circle passes under the strand instead of over. The argument for the initial variant also applies in this alternative setting.

\subsubsection{Movie Move 15}

\begin{figure}[H]
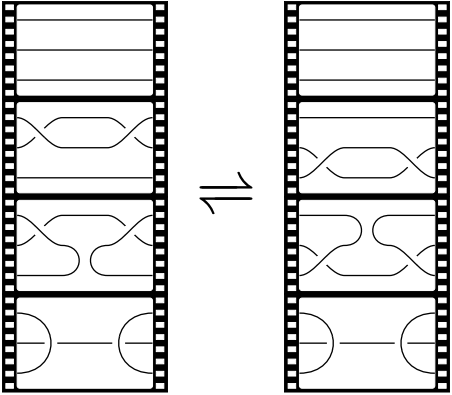

    \centering
    \includeFig{S04-3F500}
    \caption{Movie move 15}
    \label{fig:4:MM15}
\end{figure}

The two sides of movie move 15 precisely give rise to the chain maps whose cones we considered in the proof of invariance under Reidemeister III moves. Thus, invariance under movie move 15 follows from Remark~\ref{rem:3:conemaps}.

Note that there are two versions of the Reidemeister III move, one with the strand passing under an overcrossing, and one with the strand passing under an undercrossing.  Reading movie move 15 in the forward direction corresponds with the overcrossing, and the undercrossing corresponds with reading the movie in reverse.  Invariance with respect to both versions of the move can be shown in the same way.

\subsection{Chronological Movie Moves}

The link cobordism category allows ambient isotopies that produce changes in the chronology of the planar cobordisms that appear in the induced chain maps.  Therefore, in order to show the functoriality of odd Khovanov homology up to sign, we must show that exchanging the order of pairs of distant link cobordisms at most induces an overall change in sign.\par

\subsubsection{Commuting Births with Death Cobordisms}

Birth planar cobordisms commute with death cobordisms, but the sign on the death cobordism chain map is determined by the number of circles in the diagram.  The underlying cobordisms are equal, but a sign is induced in the chain maps.  Therefore, odd Khovanov bracket induces a sign when commuting birth link cobordisms with death link cobordisms.

\subsubsection{Commuting Births with Birth, Saddle, or Reidemeister II and III Type Cobordisms}

Birth planar cobordisms commute with all elementary planar cobordisms. Furthermore, the signs that are attached to all cobordisms which are not deaths do not depend on the number of circles in the diagram.  Therefore, odd Khovanov bracket respects commuting birth link cobordisms with all elementary link cobordisms except deaths.

\subsubsection{Commuting Births with Reidemeister I Type Cobordisms}

The signs attached to deaths are also attached to positive Reidemeister I type cobordisms. Therefore, the odd Khovanov bracket up to sign respects commuting births and positive Reidemeister I type cobordisms, as it did with births and deaths.  The negative Reidemeister I type cobordism chain map does not have the same correcting signs, thus births and such cobordisms commute without incurring a sign, as was the case when commuting a birth cobordism with another birth cobordism, a saddle, or a Reidemeister II or III type cobordism.

\subsubsection{Commuting a Pair of Death Cobordisms}

Exchanging the chronology of a pair of death link cobordisms induces a sign. This is sufficient for the odd Khovanov bracket to respect commuting a pair of death link cobordisms.

\subsubsection{Commuting Deaths with Saddle Cobordisms}\label{sec:4:deathsaddle}

We will show in Lemma \ref{lem:deathsadddle} that global factors control whether or not exchanging the order of a death link cobordism and a general saddle induces a sign on the odd Khovanov bracket's chain map.  As a consequence, the exchange introduces at most an overall sign, and the odd Khovanov bracket respects the exchange up to sign.

\begin{lemma}\label{lem:deathsadddle}
    Changing the chronological order of a death and a saddle does not introduce a sign if the resolution of the saddle increases the number of circles in the zero resolution, and introduces a sign otherwise.
\end{lemma}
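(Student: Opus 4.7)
My plan is to compare, at each vertex $\alpha$ of the cube of resolutions, the two chain maps coming from the two chronological orderings, and to show that their ratio is a global sign controlled solely by the behavior of the saddle at the zero resolution $\zeta$. Let $K_0$ denote the initial link of the composite cobordism, $L_0 = K_0\setminus\{\textnormal{death circle}\}$, $L_1$ the result of the saddle acting on $L_0$, and $K_1 = L_1\sqcup\{\textnormal{death circle}\}$. Write $\Phi_1$ for the death-then-saddle composite (passing through $L_0$) and $\Phi_2$ for the saddle-then-death composite (passing through $K_1$).

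At a fixed $\alpha$, both composites realize the same underlying $2$-surface---a death disk disjoint from a planar saddle---differing only in the chronology of these two critical points. Since the death is always odd and the planar saddle at $\alpha$ is even when it is a merge and odd when it is a split, the chronology-swap sign $\epsilon_{\mathrm{chr}}(\alpha)$ in $\cobIII$ is $+1$ in the merge case and $-1$ in the split case. Next I would compute the signs attached to the two chain maps using the conventions of Section~3: the death contributes $(-1)^{S(\textnormal{source},\alpha)}$, while the saddle contributes $1$ (merge case), $(-1)^{S(L_0,\alpha)}$ (split case), or in the general case a sign coming from a chosen sign assignment on the enlarged cube $L'$. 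Combining these with the identity $S(K_i,\alpha)=S(L_i,\alpha)+1$ and the facts that $S$ jumps by one across a planar split and is unchanged across a planar merge, I would distill the sign ratio $\eta_1(\alpha)/\eta_2(\alpha)$ into an explicit expression.

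The heart of the argument is the cancellation $\Phi_1/\Phi_2 = (\eta_1(\alpha)/\eta_2(\alpha))\cdot\epsilon_{\mathrm{chr}}(\alpha)$, which I expect to collapse to a single global sign: $+1$ if the saddle is a split at $\zeta$ and $-1$ otherwise. For pure-merge and pure-split saddles (those merging or splitting $4$-valent components of the underlying graph), this is a short computation from the explicit sign formulas, in which the $S$-jump across the saddle offsets the parity shift coming from the death circle. For the general saddle, which is a merge at some resolutions and a split at others, I would invoke the $L'$ construction from Section~3 and use Lemma~\ref{lem:3:CWsubcomplex} to fix compatible sign assignments on the cubes of $K_0$, $L_0$, $L_1$, $K_1$, and $L'$, so that the saddle chain map in Case~2 differs from the one in Case~1 only through the death sign formula; the $\alpha$-dependent pieces then cancel, leaving only the zero-resolution sign.

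The main obstacle will be this general case: the $(-1)^{\deg(\alpha)}$ normalization built into the general saddle chain map interacts nontrivially with the chosen sign assignment on $L'$, and one must verify that the resulting $\alpha$-dependent contributions really do cancel against $\epsilon_{\mathrm{chr}}(\alpha)$ to leave only a global factor determined by $c(L_0,\zeta)-c(L_1,\zeta)$. Here I would lean on the canonical-up-to-sign uniqueness results (Lemmas~\ref{lem:3:signinv}--\ref{lem:3:uniqueness}) and on the relative cocycle argument $Z^1(Q',Q_0\cup Q_1;\mathbb{Z}_2)\cong\mathbb{Z}_2$ from Section~3 to conclude that the two orderings induce chain maps agreeing up to exactly the claimed global sign.
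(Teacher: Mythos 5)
Your proposal is correct and follows essentially the same route as the paper's proof: fix a vertex $\alpha$, separate the chronology-swap sign (odd/odd anticommute, even/odd commute) from the $S$-function sign attached to the death, use the $L'$ construction to see that the saddle sign is unaffected by the presence of the death circle, and then track $S(L_1,\alpha)-S(L_0,\alpha)$ to find it is controlled by whether the saddle is a merge or split at $\zeta$. The paper streamlines what you flag as the "main obstacle" by observing directly that the saddle sign $\epsilon$ is literally the same in both orderings (same $L'$ cube, same $(-1)^{\deg(\alpha)}$ factor, degree unchanged by the disjoint death circle), after which only the death-sign table remains, but this is the same cancellation you describe.
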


\begin{proof}
We have essentially the following movie move.

\begin{figure}[H]
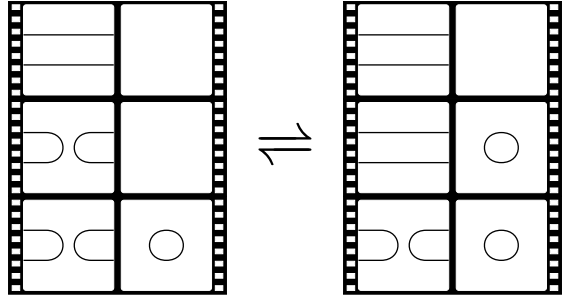

    \centering
    \includeFig{S04-4F001}
    \caption{Death and saddle chronological movie move}
    \label{fig:MMDS}
\end{figure}

We get the following chain map in each degree $\alpha$.

\vspace{1em}
{\noindent}\begin{minipage}{0.5\textwidth}
    \begin{equation}
        \includeFig{S04-4F002}
    \end{equation}
    \end{minipage}%
    \begin{minipage}{0.5\textwidth}
    \begin{equation}
        \includeFig{S04-4F003}
    \end{equation}
    \end{minipage}
\vspace{1em}
    
Recall that the chain map for a saddle was produced by considering a larger link $L'$.  Once we specify an $\alpha$, the saddle is either a split or a merge.  If the saddle is a split, then the underlying cobordisms anticommute 
$\left(\overleftarrow{\varphi}_{1,\alpha}^{2,\alpha}\overleftarrow{\varphi}_{0,\alpha}^{1,\alpha}=-\overrightarrow{\varphi}_{1,\alpha}^{2,\alpha}\overrightarrow{\varphi}_{0,\alpha}^{1,\alpha}\right)$.
Otherwise, the saddle is a merge and the underlying cobordisms commute
$\left(\overleftarrow{\varphi}_{1,\alpha}^{2,\alpha}\overleftarrow{\varphi}_{0,\alpha}^{1,\alpha}=\overrightarrow{\varphi}_{1,\alpha}^{2,\alpha}\overrightarrow{\varphi}_{0,\alpha}^{1,\alpha}\right)$. 
Note that the saddle maps are built off of the same cube, and the only final map is produced by introducing signs in odd degrees, thus 
$\overleftarrow{\epsilon}_{1,\alpha}^{2,\alpha}=\overrightarrow{\epsilon}_{0,\alpha}^{1,\alpha}$.  To see the change in the other signs, we must consider how the $S$-map that defined the signs on deaths changes when the saddle is a merge or a split with respect to $\alpha$ and the all-zero resolutions.
     
\begin{center}
\begin{tabular}{ c | c | c }
    $S(L_1,\alpha)-S(L_0,\alpha)$ & $C(L',1\alpha)=C(L',0\alpha)+1$ & $C(L',1\alpha)=C(L',0\alpha)-1$ \\ 
    \hline
    $C(L',1\zeta)=C(L',0\zeta)+1$ & 1 & 0 \\ 
    \hline
    $C(L',1\zeta)=C(L',0\zeta)-1$ & 0 & 1    
\end{tabular}
\end{center}

    From the map assigned to deaths, $\overrightarrow{\epsilon}_{1,\alpha}^{2,\alpha}=(-1)^{S(L_1,\alpha)-S(L_0,\alpha)}\overleftarrow{\epsilon}_{0,\alpha}^{1,\alpha}$.  From these relationships it is clear that regardless of a choice of $\alpha$, if a saddle is a split with respect to the zero resolution, $\overleftarrow{\Phi}_{1,\alpha}^{2,\alpha}\overleftarrow{\Phi}_{0,\alpha}^{1,\alpha}=\overrightarrow{\Phi}_{1,\alpha}^{2,\alpha}\overrightarrow{\Phi}_{0,\alpha}^{1,\alpha}$. Thus, we can say deaths commute with such saddles.  If a saddle is a merge with respect to the zero resolution, $\overleftarrow{\Phi}_{1,\alpha}^{2,\alpha}\overleftarrow{\Phi}_{0,\alpha}^{1,\alpha}=-\overrightarrow{\Phi}_{1,\alpha}^{2,\alpha}\overrightarrow{\Phi}_{0,\alpha}^{1,\alpha}$. Thus, we can say deaths anticommute with such saddles.
\end{proof}

\subsubsection{Commuting Deaths with Reidemeister I Type Cobordisms}

There are four versions of the Reidemeister I type cobordism, each of which produces homotopic chain maps up to sign when chronologically rearranged with a death.  Only the Reidemeister I cobordism corresponding with a negative twist commutes with deaths, while all other variants induce an overall sign.

\paragraph{Commuting Deaths with Positive Reidemeister I Type Cobordisms}\phantom{.}

{\noindent}Consider the nonzero portion of the following chain maps. The chain map $\overleftarrow{\Phi}$ is induced by a positive Reidemeister I type cobordism followed by a death cobordism. The chain map $\overrightarrow{\Phi}$ is induced by a death cobordism followed by a positive Reidemeister I type cobordism.

\begin{equation}
    \includeFig{S04-4F011}
\end{equation}

\begin{equation}
    \includeFig{S04-4F012}
\end{equation}

The underlying cobordisms in the above chain maps anticommute as the map induced by the Reidemeister type cobordism contains a difference of cobordisms, each of which contains a single odd elementary cobordism when factored.  When building the first sign for both $\overleftarrow{\Phi}$ and $\overrightarrow{\Phi}$ the $S$-map is passed the same diagram and index, producing the same sign.  For the second pair of maps, the degree of the index passed to $S$ is the same, but one of the diagrams, and the associated zero resolution of that same diagram, has an additional two circles, which leads to $S$ producing the same sign.  Thus, in total $\overleftarrow{\Phi}=-\overrightarrow{\Phi}$.

\paragraph{Commuting Deaths with Inverse Positive Reidemeister I Type Cobordisms}\phantom{.}

{\noindent}Consider the nonzero portion of the following chain maps. The chain map $\overleftarrow{\Phi}$ is induced by an inverse positive Reidemeister I type cobordism followed by a death cobordism. The chain map $\overrightarrow{\Phi}$ is induced by a death cobordism followed by an inverse positive Reidemeister I type cobordism.

\vspace{1em}
{\noindent}\begin{minipage}{0.5\textwidth}
    \begin{equation}
        \includeFig{S04-4F013}
    \end{equation}
    \end{minipage}%
    \begin{minipage}{0.5\textwidth}
    \begin{equation}
        \includeFig{S04-4F014}
    \end{equation}
    \end{minipage}
\vspace{1em}

The underlying cobordisms in the above chain maps anticommute as the map induced by the Reidemeister type cobordism contains a single odd elementary cobordism.  For the left-hand side, the signs cancel as the same sign is applied to both terms. For the right-hand side, both signs the index passed to the $S$-map are the same, and as in the prior case there are two fewer circles in both the diagram and the zero resolution for the second map. Therefore, the signs also cancel with each other.  Thus, in total $\overleftarrow{\Phi}=-\overrightarrow{\Phi}$.

\paragraph{Commuting Deaths with Negative Reidemeister I Type Cobordisms}\phantom{.}

{\noindent}Consider the nonzero portion of the following chain maps. The chain map $\overleftarrow{\Phi}$ is induced by a negative Reidemeister I type cobordism followed by a death cobordism. The chain map $\overrightarrow{\Phi}$ is induced by a death cobordism followed by a negative Reidemeister I type cobordism.

\vspace{1em}
{\noindent}\begin{minipage}{0.5\textwidth}
    \begin{equation}
        \includeFig{S04-4F015}
    \end{equation}
    \end{minipage}%
    \begin{minipage}{0.5\textwidth}
    \begin{equation}
        \includeFig{S04-4F016}
    \end{equation}
    \end{minipage}
\vspace{1em}

The underlying cobordisms in the above chain maps commute as the map induced by the Reidemeister type cobordism does not contain any odd elementary cobordism when factored.  The sign for the left chain map is based on a diagram with the same number of circles as the diagram used for the sign on the right, but the degree for the sign on the left is one less than the one on the right. Furthermore, the number of circles in the zero resolution of the diagram on the left is one greater than the number of circles in the zero resolution of the diagram on the right, canceling out and producing the same sign from the $S$ map on both the left and right-hand sides.  Thus, in total $\overleftarrow{\Phi}=\overrightarrow{\Phi}$.

\paragraph{Commuting Deaths with Inverse Negative Reidemeister I Type Cobordisms}\phantom{.}

{\noindent}Consider the nonzero portion of the following chain maps. The chain map $\overleftarrow{\Phi}$ is induced by an inverse negative Reidemeister I type cobordism followed by a death cobordism. The chain map $\overrightarrow{\Phi}$ is induced by a death cobordism followed by an inverse negative Reidemeister I type cobordism.

\vspace{1em}
{\noindent}\begin{minipage}{0.5\textwidth}
    \begin{equation}
        \includeFig{S04-4F017}
    \end{equation}
    \end{minipage}%
    \begin{minipage}{0.5\textwidth}
    \begin{equation}
        \includeFig{S04-4F018}
    \end{equation}
    \end{minipage}
\vspace{1em}

The underlying cobordisms in the above chain maps commute as the map induced by the Reidemeister type cobordism contains a difference of cobordisms, each of which contains an even number of odd elementary cobordism when factored.  The sign for the left chain map is based on a diagram with the same number of circles in its zero resolution as the zero resolution of the diagram on the right, but the degree for the sign on the left is one less than the one on the right. Furthermore, the number of circles in the diagram on the left is one fewer than the number of circles in the zero resolution of the diagram on the right, thus the signs on the two sides are not equal.  Thus, in total $\overleftarrow{\Phi}=-\overrightarrow{\Phi}$.

\subsubsection{Commuting Deaths with Reidemeister II or III Type Cobordisms}

We will show that exchanging the chronology of a death link cobordism and a Reidemeister II or III type cobordism does not change the induced chain map.  

\begin{lemma}\label{lem:deathRII}
    The chain map associated with a death link cobordism commutes with the planar cobordism chain map that consists of a general saddle and a death, or a general saddle and a birth and a sign dependent on the resolution $\alpha$.
\end{lemma}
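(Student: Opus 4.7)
The plan is to reduce Lemma~\ref{lem:deathRII} to the elementary commutation results already established earlier in Section~4.4. From Lemma~\ref{lem:3:RIIinv} and diagram~\eqref{eq:3:RIImap}, the nontrivial local components of the Reidemeister II chain map are exactly of the two shapes appearing in the lemma's statement: a saddle followed by a death (appearing in the strong deformation retraction), and a birth followed by a saddle (appearing in the inclusion direction). It therefore suffices to verify that a death link cobordism commutes, up to the indicated signs, with each of these two composite planar building blocks; invariance for the full Reidemeister II and III chain maps then follows by linearity and by the fact that the remaining pieces are identities or planar ambient isotopies, which the functor respects on the nose.

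For the saddle-then-death composite, I will commute the ambient death past the inner death using the death-death anticommutation rule recorded in Section~4.4.4, and then commute it past the saddle using Lemma~\ref{lem:deathsadddle}. The first step contributes a $-1$, while Lemma~\ref{lem:deathsadddle} contributes another $-1$ precisely when the saddle is a merge with respect to the zero resolution and a $+1$ when it is a split. One then checks that the $S$-map values assigned to the two deaths in the two chronological orderings interact so that these contributions cancel; concretely, the relation $S(L_1,\alpha)-S(L_0,\alpha)$ computed in the proof of Lemma~\ref{lem:deathsadddle} is exactly the correction needed.

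For the birth-then-saddle composite, the argument is parallel: commute the ambient death past the saddle using Lemma~\ref{lem:deathsadddle} and past the birth using the birth-death commutation of Section~4.4.1, which introduces a sign determined by the number of circles in the intermediate diagram. Tracking this count against $S$ produces a residual sign that depends on $\alpha$ through $\deg(\alpha)$ and the parity of the circle count in the zero resolution; this is precisely the $\alpha$-dependent sign in the statement. The key book-keeping uses that $S$ is invariant under a merge and increments under a split, so that inserting a circle via the birth and then either merging or splitting it moves $S$ in a controlled way.

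The main obstacle is the sign accounting itself: one must carefully separate the contributions of $c(L,\alpha)$ from those of $c(L,\zeta)$, since both appear in $S(L,\alpha)$, and one must verify that the local cobordism identities given by the orientation-reversal relation \eqref{eq:splitSignChange} and the pruning relations \eqref{eq:annihilationCreation} do not introduce additional hidden signs when the ambient death is relocated across the critical levels of the saddle and the birth or death. Once this tabulation is completed for both composite types, the lemma is established and commutation of the death cobordism with the entire Reidemeister II chain map follows; Reidemeister III is then handled identically, since its chain map is built from Reidemeister II pieces via the cone construction of equations~\eqref{eq:3:conesL} and \eqref{eq:3:conesR}.
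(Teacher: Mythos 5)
Your overall strategy matches the paper's in its broad shape: reduce to showing that the death commutes with the saddle-and-death and saddle-and-birth building blocks inside the Reidemeister II chain map, and then deduce the Reidemeister III case because the RIII chain map is assembled from RII pieces through the cone construction. That last reduction is indeed exactly how the paper proceeds after the lemma.

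However, there is a genuine gap in the middle step. You propose to commute the ambient death past the inner death using the death-death anticommutation of Section 4.4.4, and then past the inner saddle using Lemma~\ref{lem:deathsadddle}. Neither of those earlier results applies here. Both are statements about commuting separate \emph{link cobordism chain maps}, each of which carries its own sign correction (the ambient and inner deaths each carry a factor $(-1)^{S(L,\alpha)}$ in 4.4.4; the saddle carries $(-1)^{\deg(\alpha)}$ in Lemma~\ref{lem:deathsadddle}). The death, split/merge, and birth that appear \emph{inside} the Reidemeister II chain map are not link cobordism chain maps; they are bare planar cobordisms whose signs are dictated by the RII chain map itself, not by $(-1)^{S}$ or $(-1)^{\deg}$. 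So the sign accounting you import from those lemmas does not track the quantities that actually appear.

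The paper's proof instead does the direct computation: it counts the number of odd elementary cobordisms that the ambient death slides past (one death, giving a sign, if the inner saddle is a merge; a split plus a death, giving no net sign, if the inner saddle is a split) to get the underlying-cobordism contribution, and separately computes the change in $(-1)^{S(L,\alpha)}$ from the change in circle count (down by two across a merge-then-death, so the $S$-sign flips; unchanged across a split-then-death, so it does not). These two contributions cancel in both cases, giving commutation. If you want to salvage your reduction, you would have to re-derive these local sign statements for the internal planar pieces rather than citing the link-cobordism lemmas, at which point you have essentially reproduced the paper's direct argument.
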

\begin{proof}
    Fix a vertex $\alpha$. For the death and saddle version we then have the following maps:
\vspace{1em}
{\noindent}\begin{minipage}{0.5\textwidth}
    \begin{equation}
        \includeFig{S04-4F004}
    \end{equation}
\end{minipage}%
\begin{minipage}{0.5\textwidth}
    \begin{equation}
        \includeFig{S04-4F005}
    \end{equation}
\end{minipage}
\vspace{1em}

If the saddle is a merge, then the underlying cobordisms anticommute as the death moves past a death. Otherwise, the saddle is a split, and the underlying cobordisms commute as a sign is incurred for moving the death past both the split and the death, resulting in no net sign change.  If the saddle is a merge then, after the cobordism, the number of circles is decreased by two as the merge and the death each decrement the number of circles. Meanwhile, if the saddle is a split, the number of circles is maintained.  It follows that the death cobordism introduces a sign only if the saddle is a merge. Therefore, the sign changes cancel out, and the overall cobordisms commute.  A very similar argument shows that deaths would commute with the saddle and the birth map, which is our particular cobordism played in reverse.
\end{proof}

Reidemeister II and its inverse chain maps are supported by identity maps and maps of the variety in Lemma \ref{lem:deathRII}. Thus, deaths commute with Reidemeister II and inverse Reidemeister II cobordisms.  Reidemeister III and its inverse's chain maps are given by identity cobordisms and cobordisms that are either the variety from Lemma \ref{lem:deathRII}, or a composition of two such cobordisms.

\subsubsection{Commuting a Pair of Saddle Cobordisms}

We will show that commuting two saddle cobordisms induces an overall sign.  Consider the odd Khovanov bracket of the larger link with two additional crossings corresponding with the two saddles.  If we fix a resolution of outside crossings, then the square corresponding with the resolution of the two distinguished vertices anticommutes:

\begin{equation}
    \Psi_{{\star}1\alpha}\Psi_{0{\star}\alpha}=-\Psi_{1{\star}\alpha}\Psi_{{\star}0\alpha}
\end{equation}

The two ways we travel around the face correspond with the rearrangement of the saddles' order.  To produce the final chain maps, we need to sprinkle in signs on odd degrees so that they commute.  

\begin{equation}
    \hspace{-0.5in}\includeFig{S04-4F006}
\end{equation}

This yields the following equation.

\begin{equation}
\begin{split}
    \overleftarrow{\Phi}_{1,\alpha}^{2,\alpha}\overleftarrow{\Phi}_{0,\alpha}^{1,\alpha} &=
    (-1)^{\deg(\alpha)}\Psi_{{\star}1\alpha}(-1)^{\deg(\alpha)}\Psi_{0{\star}\alpha}\\ &=-
    \left[(-1)^{\deg(\alpha)}\Psi_{1{\star}\alpha}(-1)^{\deg(\alpha)}\Psi_{{\star}0\alpha}\right]\\ &=
    -\overrightarrow{\Phi}_{1,\alpha}^{2,\alpha}\overrightarrow{\Phi}_{0,\alpha}^{1,\alpha}
\end{split}
\end{equation}

Therefore, generally for commuting two saddle cobordisms, $\overleftarrow{\Phi}=-\overrightarrow{\Phi}$.

\subsubsection{Commuting Saddles with Reidemeister II and III Type Cobordisms}

We will show that a saddle cobordism and a Reidemeister II or III type cobordism commute.  We will follow a similar strategy as we did with a pair of saddle cobordisms.  Consider the larger link with an additional crossing corresponding with the saddle.  Then look at the odd Khovanov bracket generated by performing the Reidemeister move on a tangle $T$ yielding the tangle $T'$ of this larger link.  Fix a resolution of outside crossings and those in the tangle. We have the following complex:

\begin{equation}
    \includeFig{S04-4F008}
\end{equation}

Note that many of the maps from Reidemeister type cobordisms map to and from different resolutions (this may be particularly obvious as $T$ and $T'$ are different tangles) but all the maps have zero degree or $\deg(\tau)=\deg(\tau')$.  If we sprinkle in signs to make the maps into chain maps, we arrive at the following maps:

\vspace{1em}
{\noindent}\begin{minipage}{0.5\textwidth}
    \begin{equation}
        \includeFig{S04-4F009}
    \end{equation}
\end{minipage}%
\begin{minipage}{0.5\textwidth}
    \begin{equation}
        \includeFig{S04-4F010}
    \end{equation}
\end{minipage}
\vspace{1em}

Using the commutativity of the initial square, we arrive at the following equation:

\begin{equation}
\begin{split}
    \overleftarrow{\Phi}_{1,\tau'\alpha}^{2,\tau'\alpha}\overleftarrow{\Phi}_{0,\tau\alpha}^{1,\tau'\alpha} &=
    (-1)^{\deg(\tau'\alpha)}\Psi_{1,{\star}\tau'\alpha}\Psi_{0,0\tau\alpha}^{1,0\tau'\alpha}\\ &=
    \Psi_{0,1\tau\alpha}^{1,1\tau'\alpha}(-1)^{\deg(\tau\alpha)}\Psi_{0,{\star}\tau'\alpha}\\ &=
    \overrightarrow{\Phi}_{1,\tau\alpha}^{2,\tau'\alpha}\overrightarrow{\Phi}_{0,\tau\alpha}^{1,\tau\alpha}
\end{split}
\end{equation}

Therefore, generally for commuting a saddle cobordism and a Reidemeister II or III type cobordism, $\overleftarrow{\Phi}=\overrightarrow{\Phi}$.

\subsubsection{Commuting Saddles with Reidemeister I Type Cobordisms}

The argument for commuting saddles with negative Reidemeister I type cobordisms is identical to the argument for commuting saddles with Reidemeister II and III type cobordisms.  With positive Reidemeister I type cobordisms, the sign will be impacted as we transition from the saddle being part of the differential to a chain map in the proof of commuting saddles and Reidemeister II and III type cobordisms.  The degree of the vertices in the 1-resolution of the extra crossing will be reduced by one, but the saddle will either decrement or increment the number of circles in the $\zeta$-resolution, either canceling with the increase in degree or introducing a sign.  Overall, the positive Reidemeister I type cobordisms will commute with saddle cobordisms up to an overall sign.

\subsubsection{Commuting a Pair of Reidemeister Type Cobordisms}

We are left to show that changing the chronological order of a pair of distant Reidemeister type cobordisms at most incurs an overall sign.  Let $F$ and $G$ be two cobordisms arising from Reidemeister moves performed in disjoint disks $D_F$ and $D_G$.  In order to show that, at most, a sign is incurred, we will reuse the machinery from invariance under movie moves 6 through 10.  First, we will prove a small lemma.

\begin{lemma}{(Worm Lemma)}\label{lem:worm}
    Let $L$ be a link diagram embedded in $\mathbb{R}^2$, $D_0$ and $D_1$ be disjoint disks containing tangles $L\cap D_i$, and $x_0$ and $x_1$ be distinguished points on the boundary of the disks away from the ends of the tangles. There exists a disk $D'$ containing both $D_0$ and $D_1$, such that the boundary of each disk $D_i$ is in the boundary of $D'$, except in an arbitrarily small neighborhood around the distinguished point, and such that $D'$ contains a new tangle $L\cap D'$ that adds finitely many extra strands from $L$ and no extra crossings.
\end{lemma}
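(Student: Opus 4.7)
The strategy is to join $D_0$ and $D_1$ by a thin planar corridor running from $x_0$ to $x_1$, producing the desired larger disk $D'$ as a planar band sum of $D_0$ and $D_1$. Because we work entirely in $\RR^2$, a standard general position argument lets the corridor miss the finitely many crossings of $L$ while meeting the strands of $L$ transversely, giving exactly the ``no new crossings, only finitely many new strands'' conclusion.

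First I would choose a smooth embedded arc $\gamma\colon[0,1]\to\RR^2$ with $\gamma(0)=x_0$, $\gamma(1)=x_1$, whose interior is disjoint from $D_0\cup D_1$; such a $\gamma$ exists because two disjoint closed disks in $\RR^2$ do not separate the plane, so $x_0$ and $x_1$ lie in the same path-component of the closed complement of $\operatorname{int}(D_0)\cup\operatorname{int}(D_1)$. Next I would perturb $\gamma$ in general position with respect to $L$, fixing its endpoints, so that $\gamma$ avoids the (finite) crossing set of $L$ and meets the rest of $L$ transversely in finitely many interior points. I would then thicken $\gamma$ to a narrow tubular neighborhood $N\subset\RR^2$, choosing the width small enough that $N$ is an embedded rectangle (a ribbon) with the following properties: $N$ contains no crossings of $L$; $N\cap L$ is a disjoint union of finitely many short transverse arcs, one for each intersection point of $\gamma$ with $L$; and $N$ meets $\partial D_i$ only in an arbitrarily small arc $a_i$ around $x_i$ for each $i\in\{0,1\}$.

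Finally, set $D':=D_0\cup N\cup D_1$ (smoothing the four corners if desired). This is two disks joined along disjoint boundary arcs by a planar band, which is a disk embedded in $\RR^2$; its boundary coincides with $\partial D_0\cup\partial D_1$ except on the two arcs $a_0,a_1$, which can be made as small as we like by shrinking $N$. Since
\begin{equation}
L\cap D' \;=\; (L\cap D_0)\,\sqcup\,(L\cap N)\,\sqcup\,(L\cap D_1),
\end{equation}
the new tangle agrees with the original tangles on $D_0$ and $D_1$, and $L\cap N$ contributes only finitely many extra strand arcs with no crossings of $L$ among them. The only nontrivial step is the simultaneous general position condition on $\gamma$ in step two, but this is a routine planar transversality statement: crossings of $L$ are a finite set and transversality to the 1-manifold underlying $L$ is open and dense among smooth arcs, so such a $\gamma$ exists without difficulty. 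No step presents a genuine obstacle.
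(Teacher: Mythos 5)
Your proposal is correct and follows essentially the same approach as the paper: choose an embedded path from $x_0$ to $x_1$ in the complement of the open disks that avoids crossings and meets $L$ transversely, thicken it to a narrow corridor $N$, and set $D'=D_0\cup N\cup D_1$. The only cosmetic difference is that the paper works with a piecewise-linear path while you use a smooth arc and spell out the general-position justification in more detail.
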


\begin{proof}
    It is clear that we can produce a piecewise-linear, non-self-intersecting path $p$ from $x_0$ to $x_1$ that avoids the crossings in $L$ and $D_0$ and $D_1$ (except at their endpoints) and intersects $L$ transversely at finitely many places.  We can find $P$ a thickening of $p$ by a sufficiently small amount, such that the $P$ avoids crossings, is homeomorphic to a disk, and picks up sufficiently small portions of the boundaries of $D_0$ and $D_1$ only in a connected neighborhood around the distinguished points.  It follows that $D'=D_0\cup D_1\cup P$ has the desired properties.
\end{proof}

We can choose points $x_F$ and $x_G$ living on the boundaries of $D_F$ and $D_G$, according to the rules of acceptable point placement in Figure \ref{fig:wormrules}.  

\begin{figure}[H]
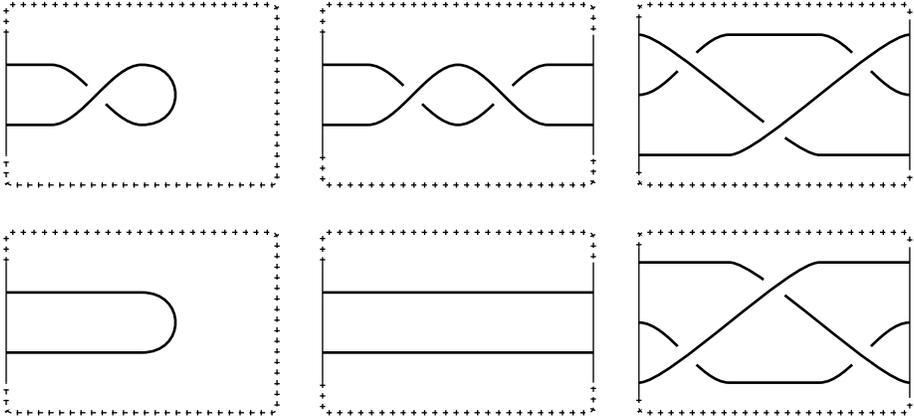

        \centering
        \captionsetup{width=4.5in}
        \includeTang{S04-4F007}
        \caption{The ticked boundary indicates where the path should connect to the disk containing each Reidemeister move}
        \label{fig:wormrules}
\end{figure}

Let $D'$ be the larger disk produced by applying the Lemma \ref{lem:worm} to our link with disks $D_F$ and $D_G$, and $x_F$ and $x_G$ be distinguished points.  We have ostensibly dug a tunnel from the disk containing $F$ to the disk containing $G$ in order to produce one large tangle, $T'\subset D'$.  Consider the commutator of the cobordisms $H\coloneqq FGF^{-1}G^{-1}$, which begins and ends at identical diagrams.  The tangle $T'$ produced in this process can be decomposed into two parts: a crossingless tangle and an annular braid surrounding the crossingless tangle.  All of the crossings from the original tangle $T'$ appear in the annular braid. By Lemma \ref{lem:t2}, $H$ induces a chain map homotopic to $\pm\id$.  It follows that either order of composing $F$ and $G$ produces homotopic chain maps up to a possible overall sign.

\section{Dotted Odd Khovanov Homology}

In \cite{Pu2015}, Putyra develops a variant of his theory that uses dotted planar cobordisms, where dots should be regarded as \enquote{infintesimal} or \enquote{frozen} handles.  In \cite{Ma2014}, Manion develops a theory where dots are realized as part of the differential.  We will review Putyra's dotted constructions, build a notion of dotted link cobordisms, and reconcile our results with Manion's.  Finally, we will prove a structural result about odd Khovanov homology that follows from dotted cobordisms.

\subsection{Dotted Cobordisms in the Planar Setting}
 Dots are marked points on the surface of cobordisms. The height at which the dot is embedded is critical, in that our regularity conditions require them to live at different levels from other dots, as well as any other critical frames.

To build our new category, $\cobIIId$, we will start with the category $\cobIII$, then add a new basic cobordism and its relevant relations.

\begin{figure}[H]
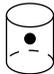

    \centering
    \includeCob{S05F001}
    \caption{The dotted cobordism generator}
    \label{fig:dot}
\end{figure}

The dot is an odd cobordism yielding the following associativity and commutativity relations:

\vspace{1em}
{\noindent}\begin{minipage}{0.5\textwidth}
    \begin{equation}
        \includeFig{S05F002}
    \end{equation}
\end{minipage}%
\begin{minipage}{0.5\textwidth}
    \begin{equation}
        \includeFig{S05F003}
    \end{equation}
\end{minipage}

\vspace{1em}
{\noindent}\begin{minipage}{0.5\textwidth}
    \begin{equation}
        \includeFig{S05F004}
    \end{equation}
\end{minipage}%
\begin{minipage}{0.5\textwidth}
    \begin{equation}
        \includeFig{S05F005}
    \end{equation}
\end{minipage}

\vspace{1em}
\begin{equation}
    \includeFig{S05F006}
\end{equation}

\vspace{1em}
\begin{equation}
    \includeFig{S05F007}
\end{equation}

\vspace{1em}
{\noindent}\begin{minipage}{0.5\textwidth}
    \begin{equation}
        \includeFig{S05F009}
    \end{equation}
\end{minipage}%
\begin{minipage}{0.5\textwidth}
    \begin{equation}
        \includeFig{S05F033}
    \end{equation}
\end{minipage}

Within a particular slice or time, dots are allowed to slide freely around the component they live on.  In the dotted setting, we get an additional sphere relation and a four-tube analog called the neck-cutting relation where, in the pictures below, the orientations are chosen according to Convention \ref{conv:2:orientations}.

\vspace{1em}
{\noindent}\begin{minipage}{0.5\textwidth}
    \begin{equation}
        \includeFig{S05F010}
    \end{equation}
\end{minipage}%
\begin{minipage}{0.5\textwidth}
    \begin{equation}
        \includeFig{S05F011}
    \end{equation}
\end{minipage}

One can check that these two relations also imply:

\begin{equation}
    \includeFig{S05F008}
\end{equation}
\vspace{1em}

From the neck-cutting relation, we can work out a useful horizontal variant, as well as a variant where the relation is rotated 90 degrees (where a solid dot represents a dot on the front sheet, and an empty dot represents a dot on the back sheet):

\vspace{1em}
{\noindent}\begin{minipage}{0.5\textwidth}
    \begin{equation}
        \includeFig{S05F013}
    \end{equation}
\end{minipage}%
\begin{minipage}{0.5\textwidth}
    \begin{equation}
        \includeFig{S05F014}
    \end{equation}
\end{minipage}
\vspace{1em}

The idea of how the horizontal neck-cutting relation can be derived from the vertical neck-cutting relation will be clear from the proof of Lemma \ref{lem:6:horizontalNeck} later in this section.

Note that the TQFT-functor $\mathcal{F}$ from Subsection \ref{sec:3:okh} extends to a functor on $\cobIIId$, where the extended functor sends a dotted identity cobordism to the map $\Lambda^*V(R)\rightarrow\Lambda^*V(R)$ given by taking the wedge product from the left with the component $R_i\subset R$ that contains the dot.

\subsection{Dotted Cobordisms in the Link Setting}

We can enrich the link cobordism setting to one with dots. As in the planar cobordism setting, we require dots to live at different levels from all other non-trivial cobordisms (including other dots).  As dots have essentially identical behavior to deaths in regard to commutativity, the chain map associated with a dot cobordism will be similar to that of a death.  The initial and terminal links $L$ are identical, and in each degree $\alpha$ we associate the map $\varphi_\alpha$---the identity cobordism with a dot fixed on the correct component after resolution---before we correct signs by setting $\Phi_\alpha\coloneqq(-1)^{S(L,\alpha)}\varphi_\alpha$.

Our functor is only defined up to an overall sign, so there is not a well-defined manner in which to discuss linear combinations of link cobordisms.  Instead, because the sign correcting our dots is well-defined (not just up to sign), we can consider linear combinations of link cobordisms---which differ by the location of dots---as long as we first fix the map assigned to all cobordisms that are not dots.

A configuration of dots is a collection of points on the surface $F$ that are all at different heights than critical points and one another.  Let $\dott(F)$ be the free abelian group on the set of configurations of dots on $F$.  We will build a category of dotted link cobordisms $\cobIVd$. The objects of $\cobIVd$ are the same as the objects in $\cobIV$, but the morphisms consist of ordered pairs of a link cobordism $F$ from $\cobIV$, and an element of the group $\dott(F)$.

Up to ambient isotopy, there are finitely many places to put a dot on an undotted link cobordism $F$.  The dot must live between two critical points on some intermediate link diagram $D$.  If we think of a link diagram $D$ as a four-valent multi-graph (essentially a normal projection where we forget which strand was on top), then the dot must live on some edge of $D$.

\begin{convention}
    Cobordisms in $\cobIV$ will be denoted with a dot for example $\dot{F}$. The underlying undotted link cobordism will be represented by the same symbol without the dot.
\end{convention}

If we wish to compose two link cobordisms, $\dot{F}$ and $\dot{G}$, to form $\dot{GF}$, we stack the underlying link cobordism $G$ on top of $F$ to form $GF$, and then we combine the attached elements of $\dott(F)$ and $\dott(G)$ by using the obvious linear map $\dott(G)\otimes\dott(F)\rightarrow\dott(GF)$ that is induced by sending a dot configuration on $G$ and a dot configuration of $F$ to their union, viewed as a dot configuration on $GF$.

It is critical to note that, in this category, it is not true that every link cobordism factors into elementary dotted link cobordisms.  As long as odd Khovanov homology has the overall sign indeterminacy in order to have functoriality, one cannot have a category that permits this nice factoring property and still allows one to move dots around in different terms on the linear combination.  

Before we discuss equivalence in $\cobIVd$, we will define the odd Khovanov bracket for objects in $\cobIVd$.  Given a dotted link cobordism $\dot F$, first consider the chain map generated by odd Khovanov bracket for $F$. This fixes a chain map assigned to each of the non-dot cobordisms.  Then, for each configuration, we consider the larger chain map where we have added in the dot chain maps using the correcting sign defined earlier in this section.  We finish by taking the linear combination of all of these resulting chain maps.  

Recall that equivalences in $\cobIV$ are sequences of ambient isotopies which we divided into three groups: those that arise from planar ambient isotopies, chronological movie moves, and Carter and Saito's fifteen movie moves.  Equivalence in $\cobIVd$ is similar, but we must account for the movements of dots.

Suppose we have two dotted cobordisms with the same underlying cobordism $F$ decorated with different elements of $\dott(F)$.  The natural manner in which to equate configurations of dots on the same underlying cobordisms is if there is an isotopy of the dots which serves as a bijection from the dots in one configuration to the dots in the other.  This isotopy should not move dots under or over strands, but it can move dots past one another or past critical points.  In the odd Khovanov bracket, moving a dot past a critical point may incur a sign.  When moving a dot in the link cobordism setting, we will impose the relation that a sign is incurred if that change incurs a sign on the odd Khovanov bracket.  The dotted link cobordisms are equivalent if, in the elements of $\dott(F)$, each configuration is equivalent to another configuration in the other linear sum, with matching coefficients corrected for the sign that accounts for traveling past critical points.

Dotted link cobordisms are equivalent if they differ by sequences of the equivalences of dots as described above or the types of ambient isotopies for undotted link cobordisms with the following caveats. Consider two dotted cobordisms $\dot F$ and $\dot G$ with underlying ambient isotopic, but non-identical, cobordisms.  Isotopies that arise from planar ambient isotopies are not an issue for our dots, as such isotopies carry the dots along with them and do not even threaten to reorder the dots with one another.  Similarly, if $F$ and $G$ are related by chronological movie moves, we require that any dots that live chronologically between the critical points are first moved out of the way. We can always do this as the commuting critical points are distant from one another.  Finally, if $F$ and $G$ are related by one of Carter and Saito's fifteen movie moves, we will require again that the dots be relocated to different levels before the movie move is carried out.

\begin{remark}
    Difficulties can arise from working in $\cobIVd$. Dots can get trapped (for example in a movie move 3), and be unable to relocate, obstructing one from completing the movie move.  Later in this section, we will see that there is a sensible way to send dots over or under crossings, insofar that the effect on the odd Khovanov bracket is easy to define.  Our current definition of $\cobIVd$ is sufficient for our purposes, but in other settings it may be necessary to incorporate these equivalences into the equivalence of dotted link cobordisms.
\end{remark}

It is important to note that, as the original odd Khovanov homology functor was only defined up to an overall sign, the dotted version cannot distinguish between a link cobordism $F$ equipped with an element of $\dott(F)$ or the negative of the element.

\begin{convention}\phantom{.}
\begin{itemize}
    \item To denote an equivalence of cobordisms in $\cobIVd$ we will use the symbol $\cong$.
    \item To denote cobordisms that induce homotopic chain maps on the odd Khovanov bracket we will use the symbol $\simeq$.
\end{itemize}    
\end{convention}

\begin{theorem}\label{thr:5:dotFunctor}
    Odd Khovanov homology extends to a functor from the category $\cobIVd$ to the category $\cobIIIdKM$ up to sign and homotopy.
\end{theorem}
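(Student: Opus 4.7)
The plan is to leverage Theorem~\ref{thr:OKHfunctor} and reduce the dotted case to it, together with a focused analysis of how the dot chain map interacts with all other elementary planar cobordisms. The undotted portion of the assignment has already been shown to descend to a well-defined functor up to sign; what remains is to verify that adjoining dots (with the sign correction $(-1)^{S(L,\alpha)}$ on each dot) produces chain maps that are well-defined on morphism classes in $\cobIVd$, respect composition, and agree up to sign and homotopy on equivalent dotted link cobordisms.

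First I would verify well-definedness on morphisms. By construction, once a representative of an undotted cobordism is fixed, each dot configuration determines a specific chain map via the sign-corrected dot maps, and the assignment extends $\mathbb{Z}$-linearly to all of $\dott(F)$. Composition then reduces to two checks: the underlying cobordisms compose correctly by Theorem~\ref{thr:OKHfunctor}, and the dot configurations merge by the canonical map $\dott(G)\otimes\dott(F)\rightarrow\dott(GF)$ with the sign $(-1)^{S(L,\alpha)}$ attached to each dot depending only on the intermediate diagram at that dot's height, which is preserved under vertical stacking.

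The main work is invariance under the equivalence relation on $\cobIVd$. That relation is generated by (i) isotopies of dot configurations, including passing a dot past a critical point with the specified sign, (ii) planar ambient isotopies of the underlying cobordism, (iii) chronological movie moves with dots first relocated away from the exchanged critical points, and (iv) the Carter--Saito movie moves with dots likewise relocated to intermediate frames. For (ii), dots are carried along rigidly, so this reduces to the undotted case. For (iii) and (iv), the requirement that dots be moved aside before performing the move splits each instance into two subproblems: the undotted movie move itself, handled by Theorem~\ref{thr:OKHfunctor}, and the dot relocations, handled by (i).

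The hard part is (i): I need a catalog of lemmas, parallel to those in Section~4.4, showing how a dot chain map commutes with each elementary link cobordism (birth, death, saddle, Reidemeister I/II/III) up to a sign that exactly matches the sign imposed in the planar dot relations. This is feasible because the dot is odd, its sign correction $(-1)^{S(L,\alpha)}$ is formally identical to that used for deaths, and the dot associativity/commutativity relations in $\cobIIId$ are formulated to mirror those of a death followed by a birth. Concretely, one would prove analogs of Lemma~\ref{lem:deathsadddle} and the commuting analyses that follow it, with \emph{dot} in place of \emph{death}; the combinatorics of $S(L,\alpha)$ changes is the same because dots, like deaths, are odd and do not alter the resolution diagrams. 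The principal obstacle is the case where a dot must be chronologically exchanged with a split saddle: here the underlying planar cobordisms pick up a sign from the chronological relation between two odd cobordisms, and one must confirm that this matches the sign predicted by the difference of $S$-values at the source and target, just as in the death/saddle analysis. Once these dot-commutation lemmas are assembled, invariance under (i) follows, and together with Theorem~\ref{thr:OKHfunctor} this yields the corollary; passing to the homotopy category and quotienting by overall signs gives the stated functor.
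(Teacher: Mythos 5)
Your proposal is correct and captures the same overall strategy as the paper, but you are planning to do more work than the paper actually does, and the extra work stems from not fully exploiting how $\cobIVd$ was defined. The paper's proof is a three-sentence observation: composition agrees by construction, dot reordering is functorial \emph{by design}, and the movie-move invariance is unchanged because the dots are relocated out of the way before any movie move is performed and the chain maps assigned to elementary link cobordisms are the same as in the undotted theory. The crux is that the equivalence relation on dot configurations in $\cobIVd$ is not an independent geometric notion that one must reconcile with the algebra afterward; rather, the paper \emph{defines} the sign incurred when a dot slides past a critical point in a link cobordism to be precisely the sign incurred on the odd Khovanov bracket. With that definition in place, your item (i) is true tautologically, and your proposed catalog of dot-commutation lemmas---analogs of Lemma~\ref{lem:deathsadddle} with a dot in place of a death---is not logically required, though it would be a sensible exercise if one wanted a purely geometric characterization of the equivalence relation independent of the bracket. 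Your observation that the sign correction $(-1)^{S(L,\alpha)}$ for dots is formally identical to that for deaths, and that the combinatorics therefore runs in parallel, is exactly why the paper can dispose of this so briefly. One small caution: you frame items (iii) and (iv) as requiring that dots be relocated away from the exchanged critical points, which is also the paper's convention, but the paper acknowledges in a remark that this relocation can be obstructed (as in movie move 3 where a dot gets trapped); in the paper's definition such a configuration is simply not an equivalence in $\cobIVd$, so there is no claim to verify there, and your argument should not be read as asserting that relocation is always possible.
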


\begin{proof}
    We defined the product of linear combinations of dotted link cobordisms earlier, such that it corresponds precisely with the composition of chain maps.  Furthermore, the signs incurred by the dot were designed to ensure that our notion of equality would permit this to be functorial with respect to the reordering of dots.  This construction still respects Carter and Saito's fifteen movie moves, as it retains the chain maps associated with each elementary link cobordism.
\end{proof}

In \cite{Ma2014}, Manion considers a dotted odd Khovanov homology, although he works strictly with the link invariant, meaning his dots are part of the differential rather than chain maps.  To translate from his setting to ours, one would have to sprinkle in signs such that Manion's dot maps commute rather than anticommute with the original differential in the odd Khovanov complex. Additionally, to ensure that dots act as a differential, Manion imposes that a cobordism containing two dots on a surface evaluates to zero.  We have only the slightly weaker condition that a cobordism containing two dots on a surface is annihilated by 2.  Additionally, Manion works with type X odd Khovanov homology, whereas we work with type Y odd Khovanov homology.  This difference slightly alters the behavior of dots particularly in their interactions with crossings.

\subsubsection{Link Cobordisms and the Neck-Cutting Relation}

The more powerful results in the dotted link cobordism setting arise from being able to apply the neck-cutting relation to link cobordisms.  We will start by showing that the vertical neck-cutting relation is a chain map, then we will use that result to show that the horizontal neck-cutting relation is as well.  Manion proved a lemma \cite[Lemma 3.2]{Ma2014} analogous to ours about horizontal necks. By working with odd Khovanov homology as a functor from $\cobIVd$, we are able to show that the vertical neck-cutting relation is respected by odd Khovanov homology, then use this to show the same for the horizontal neck-cutting relation.  This proof avoids the case-by-case analysis used by Manion, which is necessitated by the setting he works in.

\paragraph{Link Cobordism Vertical Neck Cutting Relation}\phantom{.}

{\noindent}Let $\dot{F}$ be a dotted link cobordism containing a vertical tube.  Between any two critical frames, one can always artificially choose a frame that will be treated as critical.  This amounts to a choice in every configuration of how to split the corresponding word into two parts.  Let $\dot{F'}$ be the link cobordism where we replace the neck with the vertical neck relation in a neighborhood of the artificial critical level.  We then produce the final linear combination of configurations on $\dot{F'}$ by splicing in the vertical neck-cutting relation, in the manner described in Convention \ref{conv:5:centersplice}, at the height where the artificial critical frame was fixed.  The dotted link cobordisms $\dot{F}$ and $\dot{F'}$ are depicted in diagrams \eqref{dig:5:vertneckI} and \eqref{dig:5:vertneckII}, respectively.

\begin{lemma}[The Vertical Neck-Cutting Relation for Link Cobordisms]
    If dotted link cobordisms $\dot{F}$ and $\dot{F'}$ are related by the vertical neck-cutting relation, then $\dot{F}\simeq\dot{F'}$.
\end{lemma}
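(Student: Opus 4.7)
The plan is to pass the statement through Theorem~\ref{thr:5:dotFunctor}: since odd Khovanov homology is a functor from $\cobIVd$ into $\cobIIIdKM$, and since the planar vertical neck-cutting relation is a defining identity in $\cobIIId$, it suffices to verify that the modification turning $\dot{F}$ into $\dot{F'}$ happens entirely inside a small chronological neighborhood of the artificial level and that the correcting signs on either side agree.

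First, I would chronologically factor $\dot{F}$ as a composition $\dot{F} = \dot{F}_{+}\circ C \circ \dot{F}_{-}$, where $C$ is the trivial portion of $\dot{F}$ occupying a small open interval of heights around the artificial critical level (chosen small enough to contain no genuine critical points and no dots of $\dot{F}$), and $\dot{F}_{\pm}$ are the portions above and below this interval. By functoriality, the induced chain map of $\dot{F}$ factors as $\Phi_{\dot{F}_{+}}\circ \Phi_{C}\circ \Phi_{\dot{F}_{-}}$, and the analogous decomposition of $\dot{F'}$ replaces only the middle factor by the $\mathbb{Z}$-linear combination $\Phi_{C'}$, where $C'$ is obtained from $C$ by applying the planar vertical neck-cutting relation in a neighborhood of the artificial critical frame.

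Next, I would show that $\Phi_{C}$ and $\Phi_{C'}$ agree as morphisms in $\cobIIIdKM$. At each vertex $\alpha$ of the cube of resolutions at the artificial level, the component $\Phi_{C,\alpha}$ is the identity planar cobordism on the diagram at that level, whose single vertical tube is precisely the setup in which the planar neck-cutting relation applies verbatim. Because this relation is an equality in $\cobIIId$, the rewriting lifts componentwise to the equality $\Phi_{C}=\Phi_{C'}$ in $\cobIIIdKM$; compatibility of sign assignments across vertices is automatic, since $C$ and $C'$ share identical source and target complexes and the relation is applied uniformly, and we may always restrict the sign assignment of the ambient cube to this slice via Lemma~\ref{lem:3:CWsubcomplex}. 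Composing on the left with $\Phi_{\dot{F}_{+}}$ and on the right with $\Phi_{\dot{F}_{-}}$ then gives $\Phi_{\dot{F}}=\Phi_{\dot{F'}}$, hence $\dot{F}\simeq\dot{F'}$.

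The main obstacle is confirming that the signs match once we pass from the purely planar $C=C'$ rewriting in $\cobIIId$ to the corresponding identity of induced chain maps in $\cobIIIdKM$. In $\cobIVd$, each dot of $\dot{F'}$ placed at the artificial critical level contributes a correcting factor $(-1)^{S(L,\alpha)}$ in the $\alpha$-component of $\Phi_{\dot{F'}}$, where $L$ is the intermediate link diagram at that level. Because $L$ and $\alpha$ are the same for the corresponding term arising directly from the planar neck-cutting relation applied to $\Phi_{C,\alpha}$, and because the artificial critical level lies between the same pair of genuine critical frames in both $\dot{F}$ and $\dot{F'}$, these correcting factors match term by term with the signs produced by the planar rewriting. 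This coincidence is exactly what the center-splice convention used to define $\dot{F'}$ guarantees, and once it is in place the equality of chain maps is immediate at every vertex of the cube, yielding $\dot{F}\simeq\dot{F'}$ as claimed.
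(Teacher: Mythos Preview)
Your overall strategy---factor chronologically, reduce to the small slice $C$ versus $C'$, and invoke the planar neck-cutting relation componentwise---is the same as the paper's. The gap is in the sign check.

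You account only for the dot's correcting factor $(-1)^{S(L,\alpha)}$. But each term of $C'$ also contains a death (and a birth), and the death contributes its own correcting factor $(-1)^{S(L_0,\alpha)}$ from the definition of the chain map assigned to a death link cobordism. The planar neck-cutting relation in $\cobIIId$ is an exact equality with no extra signs, so for $\Phi_{C'}=\Phi_C$ you need the \emph{product} of the death's sign and the dot's sign to be $+1$ in every term and at every vertex $\alpha$. The paper's proof makes exactly this point: in each of the two neck-cut terms there is one death and one dot, and both sit on the same underlying link diagram, so both receive the same factor $(-1)^{S(L,\alpha)}$, which squares to $1$. Your sentence ``these correcting factors match term by term with the signs produced by the planar rewriting'' does not capture this mechanism, because the planar rewriting produces no signs at all; what is needed is that the two \emph{link-level} correcting factors cancel each other.

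A minor point: the appeal to Lemma~\ref{lem:3:CWsubcomplex} is misplaced. That lemma concerns extending the edge sign assignment $\epsilon$ on the resolution cube, whereas the signs at issue here are the $(-1)^{S(L,\alpha)}$ factors attached to deaths and dots, which are defined independently of $\epsilon$.
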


\begin{convention}\label{conv:5:centersplice}
    In some of our diagrams of dotted link cobordisms there is a slight abuse of notation where we draw a diagram that makes a dotted cobordism appear to be the composition of multiple dotted cobordisms when, in fact, it does not factor as such.  For these diagrams, we will use dotted lines to indicate when the cobordism should not be assumed to factor.  In such instances, the central cobordism will appear as a linear combination of configurations sandwiched between two arbitrary cobordisms.  With such a diagram we take some arbitrary configuration on the entire cobordism that places no dots in the central component, then we splice in the central component's configurations and take a product between the outside linear combination and the central linear combination.
\end{convention}

\begin{proof}
    If we consider the chain maps living over a fixed resolution, it is clear that $\dot{F}$ and $\dot{F'}$ induce the same maps (as we have the vertical neck-cutting relation) in the dotted planar cobordism setting.  If we now consider the entire complex, it is clear that $\dot{F}$ has no signs associated with the cobordism at the level where we are altering the cobordism, as it is just an identity cobordism.  In the maps induced by $\dot{F'}$, there are signs associated with the death and with the dots.  Critically, in both terms, there is one dot and one death, and each occurs on the same underlying diagram. Therefore, they are given the same sign, and no sign is incurred as the given sign was squared.
\end{proof}

{\noindent}\begin{minipage}{0.5\textwidth}
    {\noindent}\begin{equation}\label{dig:5:vertneckI}
       \includeFig{S05F026}
    \end{equation}
\end{minipage}%
\begin{minipage}{0.5\textwidth}
    {\noindent}\begin{equation}\label{dig:5:vertneckII}
        \includeFig{S05F027}
    \end{equation}
\end{minipage}
\vspace{1em}

\paragraph{Link Cobordism Horizontal Neck-Cutting Relation}\phantom{.}

{\noindent}As a fairly immediate corollary of the vertical neck-cutting relation for link cobordisms, we have the following lemma.

\begin{lemma}[The Horizontal Neck-Cutting Relation for Link Cobordisms]\label{lem:6:horizontalNeck}
    If dotted link cobordisms $\dot{F}$ and $\dot{F'}$ are related by the horizontal neck-cutting relation, then $\dot{F}\simeq\dot{F'}$.
\end{lemma}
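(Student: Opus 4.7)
The plan is to reduce the horizontal case directly to the vertical case, mirroring in the link cobordism setting the derivation by which the planar horizontal neck-cutting relation was obtained from the planar vertical one. Given a dotted link cobordism $\dot{F}$ containing a horizontal tube, I would first insert an artificial critical frame at an intermediate height inside the tube neighborhood, so that the local cobordism factors (in the diagrammatic sense of Convention \ref{conv:5:centersplice}) as a merge followed by a split, or equivalently as a configuration exposing a vertical tube. This is a purely planar reorganization, and since planar ambient isotopies are equalities in $\cobIV$ and $\cobIII$, it does not affect the induced chain map.

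Next I would apply the Vertical Neck-Cutting Relation for Link Cobordisms (just established) to this exposed vertical tube. This replaces $\dot F$ with a linear combination of two dotted cobordisms, each of which contains a death-birth pair with a dot on one of the two resulting components. I would then use the pruning relations from \eqref{eq:annihilationCreation} in the local planar neighborhood of the artificial frame to cancel the adjacent death-merge and birth-split pairs. What remains, after tidying up with planar isotopy, is precisely the horizontal neck-cutting relation applied to $\dot F$, yielding the linear combination $\dot F'$.

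The only substantive check is the sign bookkeeping, and this mirrors the vertical case. Within a fixed resolution $\alpha$, each new dot and each new death is attached to the same underlying intermediate diagram, so the correcting factors $(-1)^{S(L,\alpha)}$ appear paired and square to $+1$, just as in the proof of the vertical relation. The relative sign between the two terms of the horizontal relation is then inherited from the planar horizontal neck-cutting relation, which was itself derived from the planar vertical neck-cutting relation together with the pruning, (2H), and sphere relations.

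The main obstacle I expect is making sure that the artificial critical frame, the pruning simplifications, and the subsequent planar isotopies can be performed compatibly across all resolutions $\alpha$ simultaneously, without disturbing signs or the chronological ordering of nearby critical points of $F$. I would handle this by choosing the artificial frame at a height strictly between two genuine critical heights of $F$, so that no saddle or dot of $F$ interferes with the local manipulation; the Worm Lemma-style reasoning used earlier (Lemma \ref{lem:worm}) guarantees that such a height can be chosen away from all other critical levels. Once the local operation is so isolated, the argument proceeds resolution by resolution and the global chain map equivalence $\dot F\simeq\dot F'$ follows.
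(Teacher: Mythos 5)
Your approach matches the paper's: both derive the horizontal neck-cutting for link cobordisms by reorganizing near the horizontal tube to expose a vertical cylinder, applying the just-proved vertical neck-cutting relation, and cleaning up with pruning and planar relations. The paper's proof carries this out in a single displayed diagram, but the underlying steps are the ones you describe.

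Two small remarks. First, you have the orientation of the manipulation consistently reversed: the horizontal neck (as used, e.g., in the torus argument) is a \emph{split followed by a merge}, which is exactly what leaves a cylinder between the two saddles on which the vertical relation can act; correspondingly the pruning pairs that get cancelled are split--death and birth--merge (the only pairs the relations in \eqref{eq:annihilationCreation} actually address), not ``death--merge'' and ``birth--split''. With merge-then-split there is no intermediate cylinder and no prunable pair, so the proof as literally written would stall. Since you also say the factorization ``exposes a vertical tube'' and invoke the correct relations, this reads as a reading-direction slip rather than a conceptual one. Second, the appeal to the Worm Lemma to place the artificial critical frame is unnecessary; the regularity conditions already guarantee that the genuine critical heights are distinct, so an intermediate regular height always exists. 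Otherwise the sign bookkeeping you outline (each new dot paired with a new death over the same intermediate diagram, so $S$-signs square to $+1$) is exactly what makes the planar computation lift to a chain-map homotopy.
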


\begin{proof}
In the following diagrams, the cobordisms outside of the relevant tangle or distant (with respect to time) from the horizontal neck are omitted.  Each cobordism should be thought of as the product of the shown configuration spliced into the center of some arbitrary outside dotted link cobordism in the manner prescribed in Convention \ref{conv:5:centersplice}.
\begin{equation}\label{dig:5:horzneck}
    \includeFig{S05F028}
\end{equation}
\end{proof}

\subsubsection{Link Cobordism Dot Slides}

In \cite{Ma2014}, Manion also considers how dots interact with crossings. Particularly, he relays what impact sliding a dot past a crossing has on the isomorphism class of the chain maps produced by odd Khovanov homology. Manion proved a theorem \cite[Theorem 3.1]{Ma2014} that sliding one of his dots under a crossing produces isomorphic twisted odd Khovanov homology.  If he had used type Y sign assignments, he would have arrived at the same result, but for an overslide.

\begin{theorem}\label{thm:5:dotslide}\phantom{.}

\begin{enumerate}[label=\alph*.]
    \item Let $\dot{F}$ and $\dot{F'}$ be the link cobordisms in diagrams \eqref{dig:5:overdotslideI} and \eqref{dig:5:overdotslideII}, respectively, related by sliding a dot over a crossing.  Then, $\dot{F}\simeq\dot{F'}$.\footnote{We can slide dots through overcrossings as we built our functor using type Y sign assignments.  If we used type X sign assignments, the theorem would hold for undercrossings instead.}

    \vspace{1em}
    {\noindent}\begin{minipage}{0.5\linewidth}
        \begin{equation}\label{dig:5:overdotslideI}
            \includeFig{S05F024}
        \end{equation}
    \end{minipage}%
    \begin{minipage}{0.5\linewidth}
        \begin{equation}\label{dig:5:overdotslideII}
            \includeFig{S05F025}
        \end{equation}
    \end{minipage}
    \vspace{1em}
    
    \item Let $\dot{G}$ and $\dot{G'}$ be the link cobordisms in diagrams \ref{dig:5:underdotslideI} and \ref{dig:5:underdotslideII}, respectively, related by sliding a dot under a crossing.  If we restrict ourselves to knot cobordisms, then for rational odd Khovanov homology, $\dot{F}\simeq\dot{F'}$.

    \vspace{1em}
    {\noindent}\begin{minipage}{0.5\linewidth}
        \begin{equation}\label{dig:5:underdotslideI}
            \includeFig{S05F022}
        \end{equation}
    \end{minipage}%
    \begin{minipage}{0.5\linewidth}
        \begin{equation}\label{dig:5:underdotslideII}
            \includeFig{S05F023}
        \end{equation}
    \end{minipage}
    \vspace{1em}
    
\end{enumerate}
\end{theorem}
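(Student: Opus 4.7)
My plan is to work componentwise in the cube of resolutions at the crossing involved in the dot slide. Fix a resolution $\alpha$ of every other crossing of the link, so that this crossing contributes only two vertices joined by a saddle cobordism in $\cobIII$. The underlying planar cobordisms of $\dot{F}$ and $\dot{F'}$ are identical, and their induced chain maps differ only by the position of a single dot in each of the two resolved diagrams. Under the functor $\mathcal{F}$, a dot corresponds to left-wedge with the host component's generator in the exterior algebra $\Lambda^\ast V(R)$, so at each resolution the two chain maps differ by wedging with different generators of different components.

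For part (a), in each of the $0$- and $1$-resolutions of the crossing the dot sits on a different component before and after the slide. I will construct an explicit chain homotopy $h$ whose only nontrivial components live at that crossing and consist of the saddle cobordism decorated with one extra dot placed on an appropriately chosen component, corrected by a sign built from $S(L,\alpha)$ and the face sign of the cube. The algebraic identity verifying $\partial h + h\partial = \dot{F} - \dot{F'}$ is exactly the statement that, modulo the quotient $\Lambda^\ast V(R)/(c'_0 - c'_1)$ implemented by a merge, and the corresponding wedge identification implemented by a split, wedging with the before-generator differs from wedging with the after-generator by an element lying in the image of the saddle map. The type Y sign convention is what makes the resulting signs agree; the analog of this proof with type X conventions (as carried out by Manion in \cite{Ma2014}) yields the underslide identity instead.

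For part (b), the same construction applied under type Y conventions to an underslide produces a homotopy identity carrying an extra sign that does not cancel over $\mathbb{Z}$. When the link is a knot, I can rewrite this obstruction using the horizontal neck-cutting relation of Lemma~\ref{lem:6:horizontalNeck} together with the sphere, torus, and pruning relations in $\cobIIId$, and reduce it to a cobordism annihilated by $2$, for instance via a configuration containing a double handle. The knot hypothesis enters because it ensures that the auxiliary dot introduced by neck-cutting lands on the same global component as the sliding dot, which is what makes the subsequent simplification possible; for a multi-component link the corresponding simplification fails. Tensoring with $\mathbb{Q}$ kills the resulting $2$-torsion and yields $\dot{G} \simeq \dot{G'}$.

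The hard part will be the sign bookkeeping in (a): the $S(L,\alpha)$ function, the face signs on the odd Khovanov cube, and the chronology signs incurred when the extra dot is introduced adjacent to the saddle must all combine so that the chain homotopy identity carries exactly the sign required, which is ultimately what forces the use of type Y rather than type X. For (b) the corresponding difficulty is identifying the obstruction as an explicit $2$-torsion cobordism in $\cobIIId$ rather than merely as a class that vanishes under some specific map, and the knot-connectedness input must be used in a controlled way during the reduction.
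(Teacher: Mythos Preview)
Your approach to part (a) is broadly in line with the paper's, though the paper (following Manion) takes the homotopy $h$ to be the \emph{undotted} reversed saddle at the crossing in question; composing $h$ with the differential yields a pair of horizontal necks, and the horizontal neck-cutting relation then produces exactly the difference of the two dotted identity cobordisms. Your proposal to decorate the saddle itself with an extra dot may also be made to work, but it is a different homotopy and you have not verified the resulting identity; in any case the essential mechanism---a homotopy supported at the single crossing, combined with neck-cutting and the type~Y sign conventions---is the same.

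Your approach to part (b) has a genuine gap. The obstruction to an underslide is \emph{not} $2$-torsion in general: for the trefoil, for instance, the relevant coloring group is $\ZZ\oplus\ZZ/3$, and the difference of two underarcs lies in the $\ZZ/3$ summand, not in any $2$-primary part. So no amount of double-handle or (2H) manipulation will kill it. The paper's argument is entirely different and more conceptual: having established part (a), one first builds the module structure of $\okh(L)$ over $\Lambda^*\col(D)$ (Theorem~\ref{thm:5:colMod}), where $\col(D)\cong\ZZ\oplus H_1(\Sigma_2(L);\ZZ)$. For a knot, $\Sigma_2(K)$ is a rational homology sphere, so $\col(D)\otimes\QQ\cong\QQ$ via the map sending every arc generator to $1$. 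Thus all arc generators become equal in $\col(D)\otimes\QQ$, and in particular the two underarcs at any crossing induce the same dot map on rational odd Khovanov homology. The knot hypothesis enters precisely as the finiteness of $H_1(\Sigma_2)$, not as a connectedness statement about where an auxiliary dot lands.
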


Manion's Theorem 3.1 implies Theorem \ref{thm:5:dotslide}.a in our setting.  Roughly, Manion considers the homotopy corresponding with the reversed saddle for the particular crossing that the dot slid past.  When we consider the terms generated by composing this saddle with the saddle differential, we arrive at a pair of horizontal neck cobordisms that then decompose via the horizontal neck-cutting relation into the difference of the two link cobordisms.

\begin{figure}[H]
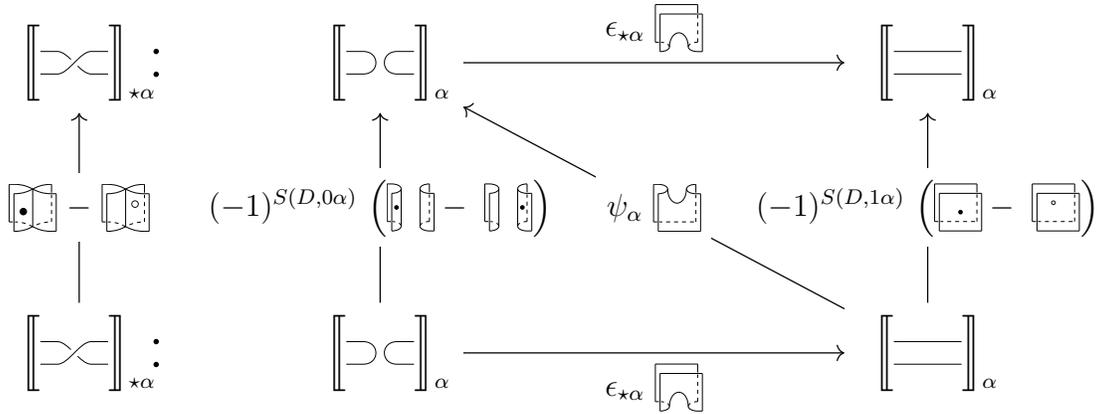

    \centering
    \captionsetup{width = 4in}
    \includeFig{S05F032}
    \caption{The homotopy map used by Manion.  Note that the particular sign $\psi_\alpha$ is carefully selected in \cite{Ma2014}}
    \label{fig:5:ManionHomotopy}
\end{figure}

In general, one cannot slide a dot under a crossing.  Consider the dotted link cobordism shown in the first picture of~\eqref{eq:5:dottedsphere}. It consists of a sphere passing through a vertical curtain, where it is assumed that the curtain is connected to itself on the outside to form an identity cobordism. The picture itself shows the projection of this cobordism into $\mathbb{R}^3$, where the projection is given by sending a point $(x,y,z,t)$ in $\RR^4$ to the point $(x,y,t)$. Along the circle of intersection, the curtain is assumed to overcross the sphere, so that it has a greater $z$-coordinate. On the level of movies, this means that the link diagram for $t=\nicefrac{1}{2}$ consists of a circle (coming from the sphere) that passes under a strand (coming from the vertical curtain). On either side of the identity cobordism we place a dot on the sphere:

\begin{equation}\label{eq:5:dottedsphere}
    \includeFig{S05F030}
\end{equation}

In \eqref{eq:5:dottedsphere} above, the first homotopy follows from the discussion in the next subsection.  It is clear that the dotted cobordism in the last line does not induce the zero chain map on odd Khovanov homology.  If we could slide dots under crossings (even up to an overall sign), then we could slide the dots onto the same side of the identity cobordism and unlink the circle.  Then, we would be left with a twice-dotted sphere killing the entire cobordism on the level of odd Khovanov homology.

We will come back to the proof of Theorem \ref{thm:5:dotslide}.b after we explore the implications of Theorem \ref{thm:5:dotslide}.a on a module structure for odd Khovanov homology.

\subsubsection{Link Cobordism and the Coloring Module}

\begin{definition}
    For a given link diagram $D$, let \defw{$\edge(D)$} be the free abelian group generated by the set of edges in $D$ when considered as a graph.
\end{definition}

\begin{definition}
    For a given link diagram $D$, let \defw{$\arc(D)$} be the free abelian group on the arcs in $D$ or $\edge(D)$ with the relation that $e_i-e_j=0$ if $e_i$ and $e_j$ are edges separated by an overcrossing.
\end{definition}

We can now define the coloring group of a knot.\footnote{Fox's $n$-colorings correspond to  homomorphisms from this group to $\ZZ/n\ZZ$.}

\begin{definition}
    For a given link diagram $D$, let \defw{$\col(D)$} be $\arc(D)$ with the additional relation that for each crossing, if the over arc is labeled $e_i$ and the other strands are labeled $e_j$ and $e_k$, respectively.  Then, $2e_i-e_j-e_k=0$.
\end{definition}

Let $\bdbc(L)$ be the branched double cover of  $S^3$ branched along a link $L\subset S^3$. Then, we have the following lemma from \cite{Pr1998} due to Przytycki.

\begin{lemma}
    Let $D$ be a diagram of a link $L$, then $\col(D)\cong\ZZ\oplus H_1\left(\bdbc(L);\ZZ\right)$.
\end{lemma}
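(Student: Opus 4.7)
The plan is to realize $\col(D) \cong \ZZ \oplus H_1(\bdbc(L); \ZZ)$ by splitting off a ``constant coloring'' summand and then identifying the remainder with the Wirtinger/Fox-calculus presentation of $H_1(\bdbc(L); \ZZ)$.

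First I would split off the $\ZZ$ summand. Define $\epsilon \colon \col(D) \to \ZZ$ on generators by $e_i \mapsto 1$ for every arc. This descends to $\col(D)$ because both families of defining relations annihilate: each over-crossing identification $e_i - e_j$ maps to $1 - 1 = 0$, and each crossing relation $2e_i - e_j - e_k$ maps to $2 - 1 - 1 = 0$. The map is surjective and splits via $1 \mapsto e_{i_0}$ for any fixed arc $i_0$, so setting $K \coloneqq \ker(\epsilon)$ yields $\col(D) \cong \ZZ \oplus K$. After choosing the basepoint $e_{i_0}$, the group $K$ inherits a presentation obtained from the defining presentation of $\col(D)$ by striking the generator $e_{i_0}$ (and, since the sum of all coloring relations vanishes, one redundant relation). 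This reduces the lemma to showing $K \cong H_1(\bdbc(L); \ZZ)$.

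For the topological identification I would use Fox calculus applied to the Wirtinger presentation of $\pi_1(S^3 \setminus L)$, which has one meridional generator $g_i$ per arc and a relation $r_c = g_j g_i g_j^{-1} g_k^{-1}$ at each crossing $c$ (with $g_j$ the over-arc and $g_i, g_k$ the two under-arcs). A direct computation of the Fox derivatives followed by the abelianization $g_i \mapsto t$ shows that the row of the Alexander matrix $A(t)$ at $c$ has entries $1-t$ at column $g_j$, $t$ at column $g_i$, $-1$ at column $g_k$, and zero elsewhere. Specializing to $t = -1$ gives $(2, -1, -1)$, which is precisely the coloring relation $2e_j - e_i - e_k = 0$; hence $A(-1)$ coincides with the matrix of coloring relations. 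Combining this with the classical fact, obtained by applying the Reidemeister--Schreier procedure to the index-two subgroup $\pi_1(S^3\setminus L) \to \ZZ/2$ sending each meridian to $1$ and then refilling the branch locus, that $H_1(\bdbc(L); \ZZ)$ is presented by $A(-1)$ with one row and one column removed, produces a presentation of $H_1(\bdbc(L); \ZZ)$ identical to that of $K$.

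The principal obstacle is in the second step: a careful sign bookkeeping in the Fox-calculus computation and a rigorous justification that, after accounting for the row/column deletion, the resulting presentation really is that of $H_1(\bdbc(L); \ZZ)$ rather than of some related module (e.g.\ of the unbranched cover or with an extra torsion factor). Both points are standard but not entirely automatic. An alternative that avoids Fox calculus is to construct a CW structure on $\bdbc(L)$ directly from the diagram $D$, with $0$-cells indexed by the sheets over each arc and $2$-cells indexed by the sheets over each crossing region, and to compute $H_1$ from the cellular chain complex; this yields the same presentation matrix and so the same conclusion.
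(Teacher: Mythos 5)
The paper does not prove this lemma---it is imported from Przytycki \cite{Pr1998}---so there is no internal argument to compare against. Your strategy, splitting off the constant-coloring $\ZZ$-summand via the augmentation $e_i\mapsto 1$ and then matching the remaining relations against the Wirtinger/Fox presentation of $H_1(\bdbc(L);\ZZ)$ at $t=-1$, is the standard route, and your Fox-derivative computation of the row $(1-t,\,t,\,-1)$ is correct.

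There is, however, one concrete error you should not leave as stated. You justify striking one redundant relation by asserting that ``the sum of all coloring relations vanishes.'' In the free abelian group on the arcs, the coefficient of $e_a$ in $\sum_c\bigl(2e_{j(c)}-e_{i(c)}-e_{k(c)}\bigr)$ equals $2m_a-2$, where $m_a$ is the number of crossings at which the arc $a$ is the over-arc: each arc is an under-arc at exactly the two crossings bounding it (contributing $-2$ in total), but the number of crossings it passes over is not constrained. Thus the sum is $\sum_a(2m_a-2)e_a$, which vanishes only when every arc overcrosses exactly once (e.g.\ for an alternating diagram), and fails in general. The row deletion is still legitimate, but for a different reason: one Wirtinger relator is a product of conjugates of the others, so one row of the abelianized Fox Jacobian is a $\ZZ[t^{\pm 1}]$-linear combination of the remaining rows, and specializing at $t=-1$ carries that integral dependence over to the coloring relation matrix. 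Replacing the vanishing-sum claim with this group-theoretic redundancy (or falling back on the CW-structure computation you sketch at the end, which sidesteps the issue entirely) closes the gap.
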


Consider the following dotted link cobordism

\begin{equation}
    \includeFig{S05F031}
\end{equation}

With cursory review, it is clear that the above dotted cobordism induces the zero map on odd Khovanov homology.  Indeed, on the vertical resolution, the term with the dot to the left crossing shows up twice, but with opposite signs.  This is the same for the term with the dot to the right of the crossing.  On the horizontal resolution, maps work out similarly for the terms with the dot on either the forward or rear curtain.  As we can slide dots over crossings, we get the following dotted link cobordism that also induces the zero map on odd Khovanov homology.

\begin{equation}
    \includeFig{S05F029}
\end{equation}

From this, a module structure on odd Khovanov homology follows.

\begin{theorem}\label{thm:5:colMod}
    Let $D$ be a diagram of a link $L$, then $\okh(L)$ is a module over the exterior algebra of $\col(D)$.
\end{theorem}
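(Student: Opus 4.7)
The plan is to build the module action one edge at a time and then descend through the defining relations of $\col(D)$. For each edge $e \in \edge(D)$, let $\mu_e$ denote the chain endomorphism of the odd Khovanov complex of $L$ induced by the identity link cobordism $L \times [0,1]$ decorated with a single dot placed on $e$. By Theorem \ref{thr:5:dotFunctor}, the homotopy class of $\mu_e$ is well-defined up to an overall sign, and extending linearly gives a map from $\edge(D)$ into $\operatorname{End}(\okh(L))$ (well-defined up to sign on each generator).

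First I would show the map factors through $\arc(D)$. If $e_i$ and $e_j$ are two consecutive edges along the overstrand at a crossing, that is, edges separated by an overcrossing in the sense of the $\arc(D)$ relations, then the defining cobordisms of $\mu_{e_i}$ and $\mu_{e_j}$ differ by sliding a dot over a crossing. Theorem \ref{thm:5:dotslide}.a then gives $\mu_{e_i} \simeq \mu_{e_j}$ up to sign.

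Next I would verify the crossing relation, descending the map to $\col(D)$. For a crossing with over arc $e_i$ and under arcs $e_j, e_k$, the first dotted link cobordism displayed just before the theorem statement induces the zero chain map on $\okh(L)$ by direct sign cancellation on each resolution of the crossing. Applying Theorem \ref{thm:5:dotslide}.a to push the dots sitting on the overstrand to adjacent edges then converts this null-homotopy into the second cobordism shown, whose vanishing is precisely the relation $2\mu_{e_i} - \mu_{e_j} - \mu_{e_k} \simeq 0$. Combined with the previous step, this produces a well-defined map $\col(D) \to \operatorname{End}(\okh(L))$, up to sign on generators.

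Third I would extend the action to $\Lambda^*(\col(D))$. Two dots on distinct points of an intermediate diagram are odd cobordisms, so chronologically exchanging them introduces the sign $-1$, giving $\mu_e \mu_{e'} \simeq -\mu_{e'} \mu_e$ on the chain level. Under the TQFT functor $\mathcal{F}$, the endomorphism $\mu_e$ acts on the odd Khovanov complex as wedge product from the left with the generator of $\Lambda^* V(R)$ corresponding to the component carrying $e$; hence $\mu_e \circ \mu_e$ acts as wedging twice with the same element of an exterior algebra, which vanishes. Anticommutativity of distinct generators combined with vanishing of squares then imply that the action of $\col(D)$ on $\okh(L)$ extends to an action of $\Lambda^*(\col(D))$, producing the asserted module structure.

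The principal obstacle will be step two. Recovering the precise integer combination $2e_i - e_j - e_k$ from the two displayed cobordisms requires careful bookkeeping of the signs arising from the $S$-map at each dot and the signs incurred by each invocation of the overslide move in Theorem \ref{thm:5:dotslide}.a, and one must verify that these combine coherently with the overall sign indeterminacy intrinsic to the dotted functor.
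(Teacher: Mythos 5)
Your proof follows the same route as the paper: induce an action of edges by dotted identity cobordisms, descend through the $\arc(D)$ relation via the dot overslide theorem, descend through the $\col(D)$ relation via the two displayed null-homotopic cobordisms, and use anticommutativity of dots to pass to the exterior algebra. Two remarks. First, you can drop the \enquote{up to sign on each generator} caveat entirely: as the paper explains in the preceding subsection, once the underlying identity cobordism is assigned the identity chain map, the sign-correcting factor $(-1)^{S(L,\alpha)}$ on a dot is well-defined on the nose, so each $\mu_e$ is an honest chain endomorphism and there is no residual sign ambiguity to track through the linear extension. Second, your observation that $\mu_e^2=0$ on homology because $\ff$ sends a dot to left-wedging by a degree-one element of $\Lambda^*V(R)$ is a genuine improvement over the paper's one-line justification, which cites only anticommutativity; over $\ZZ$, anticommutativity alone does not force squares to vanish (the dotted cobordism category itself only has $2\mu_e^2=0$), so the exterior-algebra structure on $\okh(L)$ really does require the extra input from the TQFT that you supply.
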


\begin{proof}
    From the discussion above, it is clear that elements of $\col(D)$ act on the odd Khovanov homology of $L$, where the action is induced by dotted identity cobordisms.    
    As dots anticommute with one another, the maps induced by dots are compatible with the exterior algebra product.  It follows that we can realize odd Khovanov homology as a module over the ring, which is the exterior algebra of $\col(D)$.
\end{proof}

This allows us to think of dots more geometrically, particularly because we can realize the horizontal neck-cutting relation as classes in the first homology of the branched double cover.  We can now return to Theorem \ref{thm:5:dotslide}.b.

\begin{proof}[Proof of Theorem \ref{thm:5:dotslide}.b]  Let $D$ be the diagram of a knot.  Consider the surjective homomorphism 

\begin{equation}
    \phi:\text{Col}(D)\rightarrow\ZZ
\end{equation}

which sends each generator of $\col(D)$ to 1. Note that this homomorphism is well-defined because it sends each relation on $\col(D)$ to $0$.  As $\col(D)$ is isomorphic to the first homology of the branched double cover together with an extra $\ZZ$-summand, and the branched double cover of a knot is always a rational homology sphere, $\phi$ induces an isomorphism

\begin{equation}
    \Tilde{\phi}:\text{Col}(D)\otimes\QQ\rightarrow\QQ
\end{equation}

Since $\Tilde{\phi}$ sends each generator to 1, this shows that all generators are identified in $\text{Col}(D)\otimes\QQ$.  Hence all identity cobordisms decorated by a single dot induce the same map on the rational odd Khovanov homology of the given knot.
\end{proof}

\section{Hecke Algebra Action}

In \cite{GLW2017}, the functoriality up to sign of even Khovanov homology is used to construct an action of the symmetric algebra $\mathcal{S}_n$ on the even Khovanov homology of the $n$-cable of a link.  We will construct a similar action for odd Khovanov homology where we will begin by building the proper setting.

\subsection{Framed Links and Cables}

There is a structure that can be endowed on a knot or link so that the knot or link more closely models the tying of ribbons.

\begin{definition}
    A \defw{framed} knot or link is a smooth embedding of a solid torus or solid tori into $\RR^3$.
\end{definition}

If we think of the solid torus as being $D^2\times S^1$---where $D^2$ is the unit disk in $\CC$---then we can see this ribbon-like behavior if we consider the induced embedding of the annulus $I\times S^1$, $[-1,1]\times S^1$, where $[-1,1]$ is the real part of $D^2$.  One can also view a framing as a choice of a continuous nowhere vanishing normal vector field on an ordinary, unframed knot or link---where the unframed knot corresponds to the induced embedding of the core circle of the solid torus $D^2\times S^1$.  In the following, we write

\begin{equation}
    \mathfrak{F}_K:D^2\times S^1\rightarrow \RR^3
\end{equation}

for the embedding corresponding to a framed knot $K$.

One can still consider projection-based diagrams of knots and links if we endow the diagram with the following natural framing.

\begin{definition}
    For a given diagram of a knot or link, the \defw{blackboard framing} is the framing of the link in which the chosen normal vector field lies parallel to the plane of the picture.
\end{definition}

The blackboard framing's name is derived from the idea that it is the framing which we see when a knot is projected on a blackboard and drawn with actual (non-infinitesimal) chalk lines.  Framings of knots are interesting up to isotopy.  There are countably many non-isotopic framings for every knot. 

\begin{definition}
    For a given framed knot, the \defw{framing number} is the linking number between the two boundary components of the ribbon obtained by embedding the annulus $[-1,1]\times S^1$, where here the two boundary components are oriented parallel to each other.
\end{definition}

In other words, the framing number is the signed number of twists (including coils) in the ribbon before the knot is closed back up.  For a blackboard-framed link, one can directly read off the framing number of a diagram by computing the writhe.  Framing numbers can be affixed to links by assigning a framing number to each of its components. However, for the remainder of this section, we will restrict the scope of our inquiry to knots.  

Diagrams of framed knots are known to represent isotopic framed knots if they can be transformed into each other using the normal Reidemeister II and III moves, and a modified Reidemeister I move that is a pair of twists with opposite orientations in place of the normal Reidemeister I move.  If we look at the normal Reidemeister I move, it adds a twist, and thus either increments or decrements the writhe. Thus, it could never preserve the framing. Framing also provides us with a method to build a more complicated link out of a knot or a link.

\begin{definition}
    The \defw{n-cable} of a framed knot is the link produced by the induced embedding of $n$ circles in the solid torus, where the circles are the products of $n$ evenly spaced points on the real part of $D^2$ with $S^1$.
\end{definition}

For a framed knot $K$ we will denote the $n$-cable of $K$ by $K^n$.

\begin{figure}[H]
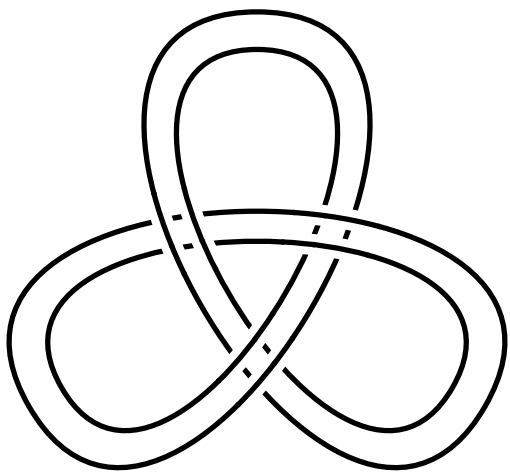

    \centering
    \includeFig{S06F003}
    \caption{The 2-cable of the trefoil}
\end{figure}

\subsection{Tangle Cobordisms}

There is a natural way in which to build cobordisms from $K^n$ to $K^m$.

\begin{definition}
    An \textbf{(}\textbf{\textit{m,n}}\textbf{)}\textbf{\textit{-tangle}} is a smooth compact 1-manifold with boundary embedded in $D^2\times I$, such that the boundary of the 1-manifold consists of $n$ points in $D^2\times\{0\}$ and $m$ points on $D^2\times\{1\}$, and no other points of the 1-manifold lie in the boundary of $D^2\times I$.
\end{definition}

Let the following map be the embedding map for a tangle $T$, which embeds disjoint copies of $S^1$ and $I$ in $D^2\times I$.

\begin{equation}
    \mathfrak{t}_T:\left(\bigsqcup_{1\leq i\leq p}I_i\right)\sqcup\left(\bigsqcup_{1\leq j\leq q}S^1_j\right)\rightarrow D^2\times I
\end{equation}

We want to build the cobordism that corresponds with \enquote{sweeping} the tangle around a torus.  Instead of embedding one-dimensional disks and circles, now we will be embedding annuli and tori. We will also be mapping to a thickened solid torus.  The following map is the obvious product map, where we embed the product of our domain with $S^1$ into the product of our range with $S^1$

\begin{equation}
    \mathfrak{t}_T\times \id_{S^1}:\left[\left(\bigsqcup_{1\leq i\leq p}I_i\right)\sqcup\left(\bigsqcup_{1\leq j\leq q}S^1_j\right)\right]\times S^1\rightarrow D^2\times I\times S^1
\end{equation}

Now we can build the tangle cobordism that corresponds to \enquote{sweeping} the tangle along a knot.  For a tangle $T$ and a framed knot $K$, we we will denote the tangle cobordism by $T\times K$, as it is essentially the product of the tangle and the knot.  The cobordism $T\times K$ lives in $\RR^3\times I$, and we start by considering the map $\mathfrak{t}_T\times S^1$, which lands in $D^2\times I\times S^1$. We map the $I$ component of $D^2\times I\times S^1$ to the $I$ component of $\RR^3\times I$, and the $D^2\times S^1$ component into $\RR^3$ using $\mathfrak{F}_K$, which is the map that embeds the framed knot $K$ in $\RR^3$.

\paragraph{Notation and Braid Generators}\phantom{.}

{\noindent}Of particular interest will be the tangle cobordism where the tangle arises from a generator of the $n$-strand braid group.  The $i\ordth$ generator of the $n$-strand braid group is the tangle on $n$ strands numbered $1$ through $n$ that crosses the $i\ordth$ and the $(i+1)\ordth$ strands.

\begin{figure}[H]
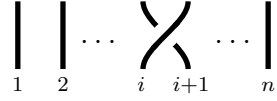

    \centering
    $\underset{1\hspace{1.1em}2}{\mysym{03}}\dots\underset{\hspace{0.5em}i\hspace{0.9em}i+1}{\mysym{01}}\dots\underset{n}{\mysym{34}}$
    \caption{The positive $i\ordth$ generator of the braid group on $n$ strands}
\end{figure}

We will denote the cobordism associated with the positive $i\ordth$ generator of the braid group on $n$ strands using the convention depicted in the following diagram.

\begin{equation}\label{dig:6:tangleNotation}
    \mysym{01}_i\upK
\end{equation}

The notation in \eqref{dig:6:tangleNotation} is the general notation we will use, where the subscript $i$ denotes that the left-most strand of the presented tangle is the $i\ordth$ strand, and the superscript notates that we are sweeping around the knot $K$, and that this tangle contains $n$ strands such that all those not depicted are identity strands (they go straight from the bottom to the top of the tangle).  One tangle that will be of particular interest is the \enquote{cap-cup} tangle, which is essentially the one-dimensional version of a death followed by a birth.

\begin{figure}[H]
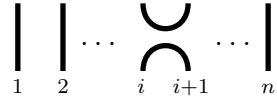

    \centering
    $\underset{1\hspace{1.1em}2}{\mysym{03}}\dots\underset{\hspace{0.5em}i\hspace{0.9em}i+1}{\mysym{04}}\dots\underset{n}{\mysym{34}}$
    \caption{The $i\ordth$ cap-cup tangle on $n$ strands}
\end{figure}

\subsection{The Hecke Algebra}

The Hecke algebra of type $\mathcal{A}_{n-1}$ is an associative unital algebra $\mathcal{H}(q^2,n)$ with generators and relations listed below, where $q$ is a fixed invertible element of the ground ring.

\singlespacing
\begin{equation}
    \mathcal{H}(q^2,n)=
    \resizebox{\width}{1.2\height}{\Bigg\langle}
    \left. g_1,\dots,g_n\middle|
    \begin{array}{cc}
    g_i^2=(q^2-1)g_i+q^2 &  1\leq i\leq n\\
    g_ig_{i+1}g_i=g_{i+1}g_ig_{i+1} & 1\leq i\leq n \\
    g_ig_j=g_jg_i & 1\leq i,j\leq n\text{ and }|i-j|\geq 2
    \end{array}
    \right.
    \resizebox{\width}{1.2\height}{\Bigg\rangle}
\end{equation}
\doublespacing

The algebra $\mathcal{H}(q^2,n)$ can be seen as a quotient of the group algebra of the braid group on $n$ strands with the additional relation $g_i^2=(q^2-1)g_i+q^2$.  This relation is related to the skein relation for the Jones polynomial and, more generally, the HOMFLY-PT polynomial \cite{Jo1987}. One can also think of the Hecke algebra $\mathcal{H}(q^2,n)$  as a deformation of the group algebra of the symmetric group on $n$ elements, to which it specializes when $q=1$.  In this dissertation, we will be particularly concerned with the case when $q=i$. Thus, we are considering the following particular algebra, where coefficients are assumed to be in $\mathbb{Z}$:

\singlespacing
\begin{equation}
    \mathcal{H}(-1,n)=
    \resizebox{\width}{1.2\height}{\Bigg\langle}
    \left. g_1,\dots,g_n\middle|
    \begin{array}{cc}
    g_i^2=-2g_i-1 &  1\leq i\leq n\\
    g_ig_{i+1}g_i=g_{i+1}g_ig_{i+1} & 1\leq i\leq n \\
    g_ig_j=g_jg_i & 1\leq i,j\leq n\text{ and }|i-j|\geq 2
    \end{array}
    \right.
    \resizebox{\width}{1.2\height}{\Bigg\rangle}
\end{equation}
\doublespacing

\subsection{Swapping Strands in the $n$-Cable}

In \cite{GLW2017}, it was shown that on even Khovanov homology, the map induced by exchanging two adjacent strands of the $n$-cable of a knot decomposes into plus or minus the identity map and plus or minus the map induced by the cobordism that is the product of the cap-cup tangle over the two strands with the knot.  We will prove a similar result for odd Khovanov homology, which is a porism or corollary of the proof from the result in \cite{GLW2017}.  We will outline the proof here in order to highlight that there are no significant changes incurred by transferring to the odd setting.

In the following movies, we will only show two strands. As long as $n$ is bigger than two, there are more strands either surrounded by or surrounding the shown strands.  The shown strands are the two that are going to be exchanged.  The movie of the strand-swap begins with one of the strands crossing over the other with a Reidemeister II move.  Next, one of the crossings \enquote{slides} around the knot, passing over and under strands along the way in a sequence of Reidemeister III moves.  To finish, another Reidemeister II move completes the swap.  This process is shown in diagram \eqref{dig:6:strandSwap}, where the box labeled $K$ represents the knotted part of the $n$-cable (more explicitly, it represents the $n$-cable of a framed $(1,1)$-tangle diagram whose closure is $K$).

\vspace{1em}
{\noindent}\begin{minipage}{0.5\linewidth}
    \begin{equation}\label{dig:6:strandSwap}
    \includeMov{S06F004}
    \end{equation}
\end{minipage}%
\begin{minipage}{0.5\linewidth}
    \begin{equation}\label{dig:6:strandSwapUnknot}
    \includeMov{S06F001}
    \end{equation}
\end{minipage}
\vspace{1em}

We start by considering the $n$-cable of the 0-framed unknot with the blackboard framing $U$, whose movie is shown in diagram \eqref{dig:6:strandSwapUnknot}.  Now the movie is generated by two Reidemeister II type cobordisms and an isotopy.  Consider the following diagram representing the support of the strand-swap cobordism in which we have forgotten all of the signs.

\begin{equation}
    \includeFig{S06F005}
\end{equation}

If we look at the right-hand side, it is clear that the composition produces the identity cobordism. If we look at the left-hand side, the map $\varphi$ is the one coming from the Reidemeister II cobordism chain map. This generates the following movie.

\begin{equation}
    \includeFig{S06F002}
\end{equation}

If we put these together, we see that the action decomposes in the following manner on the unknot.

\begin{equation}
    \left\llbracket\mysym{01}_i\upU\right\rrbracket_\alpha=\pm\left\llbracket\mysym{03}_i\upU\right\rrbracket_\alpha\mp\left\llbracket\mysym{04}_i\upU\right\rrbracket_\alpha
\end{equation}

Now, we can return to the case of the general knot.  The claim is that we arrive at the following diagram.

\begin{equation}
    \includeFig{S06F006}
\end{equation}

The Reidemeister III cobordisms cannot change the smoothing at the crossing that is not traveling. A grading argument is employed in \cite{GLW2017} to argue that if we restrict the Reidemeister III chain maps to the diagram in which the traveling crossing has the correct smoothing, then the cobordisms restrict to a pair of Reidemeister II type cobordisms.  This behavior aligns with the Reidemeister III chain map we previously defined, which is built from an identity cobordism and a pair of Reidemeister II type cobordisms.  As the gradings and the underlying cobordisms in the chain maps are the same in the even and odd settings, the same argument applies here as well. Therefore, we have the following equation.

\begin{equation}
    \left\llbracket\mysym{01}_i\upK\right\rrbracket_\alpha=\pm\left\llbracket\mysym{03}_i\upK\right\rrbracket_\alpha\mp\left\llbracket\mysym{04}_i\upK\right\rrbracket_\alpha
\end{equation}

\paragraph{Fixing the Sign Ambiguity}\phantom{.}

{\noindent}At this point it is not clear what sign shows up on the two terms.  In fact, these signs may depend on additional choices, since odd Khovanov homology is only functorial up to sign.  We therefore write

\begin{equation}\label{eq:6:decomp}
    \left\llbracket\mysym{01}_i\upK\right\rrbracket=a_i\left\llbracket\mysym{03}_i\upK\right\rrbracket+b_i\left\llbracket\mysym{04}_i\upK\right\rrbracket
\end{equation}

where $a_i$ and $b_i$ are either 1 or $-1$. Consider the induced map on the odd Khovanov homology after the odd Khovanov TQFT is applied.

\begin{equation}
     \mathcal{F}\left(\left\llbracket\mysym{01}_i\upK\right\rrbracket\right):\okh(K^n)\rightarrow\okh(K^n)
\end{equation}

Particularly consider rational odd Khovanov homology.  As the induced map is a linear operator on a finite-dimensional vector space over the perfect field $\QQ$, it uniquely decomposes into a semisimple part and a commuting nilpotent part via its Jordan-Chevalley decomposition. Lemma~\ref{lem:6:toruszero} below further shows that, under some assumptions, the cap-cup component in \eqref{eq:6:decomp} induces a nilpotent map. Since the first component in \eqref{eq:6:decomp} comes from an identity cobordism, it thus follows that the decomposition in \eqref{eq:6:decomp} corresponds to the Jordan-Chevalley decomposition of the induced linear operator. We will fix the sign of the crossing in \eqref{eq:6:decomp} so that a negative sign appears on the semisimple part of the Jordan-Chevalley decomposition. Since odd Khovanov homology---and hence the identity map induced by a crossing---is always nonzero over any coefficient ring, fixing the sign over rational coefficients also fixes the sign over integral coefficients.

\begin{equation}
    \left\llbracket\mysym{01}_i\upK\right\rrbracket=-\left\llbracket\mysym{03}_i\upK\right\rrbracket+b_i\left\llbracket\mysym{04}_i\upK\right\rrbracket
\end{equation}

We can now fix the sign on the cap-cup piece so that it is equal to the sum of the crossing and the identity tangles.

\begin{equation}\label{eq:6:skein}
    \left\llbracket\mysym{01}_i\upK\right\rrbracket=-\left\llbracket\mysym{03}_i\upK\right\rrbracket+\left\llbracket\mysym{04}_i\upK\right\rrbracket
\end{equation}

It follows for the opposite crossing type we assign the following:

\begin{equation}
    \left\llbracket\mysym{02}_i\upK\right\rrbracket=-\left\llbracket\mysym{03}_i\upK\right\rrbracket-\left\llbracket\mysym{04}_i\upK\right\rrbracket
\end{equation}

\paragraph{Evaluating Tori}\phantom{.}

{\noindent}In the even setting, a torus embedded in $\RR^4$ evaluates to 2, even when the torus is knotted. In the odd setting, it is not always the case that tori evaluate to 0 when knotted with other components of a link cobordism.  Under specific conditions, we can guarantee that the tori that arise from cobordisms of the form $T\times K$ evaluate to 0.

\begin{lemma}\label{lem:6:toruszero}
    For a natural number $n$ and a framed knot $K$, if either $n$ is even or $K$ has even framing, then the following holds.
    \begin{equation}
        \left\llbracket\mysym{08}_i\upK\right\rrbracket\simeq0
    \end{equation}
\end{lemma}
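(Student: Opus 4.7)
The plan is to isolate the torus component of $\mysym{08}_i\upK$, cut it open via the horizontal neck-cutting relation (Lemma \ref{lem:6:horizontalNeck}), and track the signs using the dot-slide theorem (Theorem \ref{thm:5:dotslide}) to obtain the required cancellation under the parity conditions.

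To identify the torus, we write $\mysym{08}_i=\mysym{04}_i\circ\mysym{04}_i$. The cap arc at the top of the lower factor and the cup arc at the bottom of the upper factor share endpoints on the middle time slice, so their union forms a closed loop $L\subset D^2\times[\tfrac{1}{4},\tfrac{3}{4}]$. Sweeping $L$ around $K$ via the product construction then produces a torus $T=\mathfrak{F}_K(L\times S^1)$ inside the link cobordism, disjoint in $4$-space from the cap-annulus, the cup-annulus, and the $n-2$ identity annuli coming from the remaining strands of the cable.

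Next, we apply Lemma \ref{lem:6:horizontalNeck} to $T$ at a longitudinal circle $\{l_0\}\times S^1\subset T$ for a regular point $l_0\in L$. This produces a sum of two link cobordisms in which $T$ is replaced by an annulus capped off by two disks, one carrying a dot on one side of the cut and one on the other, and entering with opposite signs. The capping disks can be chosen to lie within the solid torus bounded by $T$, so that each resulting dotted surface is a sphere disjoint in $4$-space from the other components of the cobordism.

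To finish, observe that the two resulting terms differ only in the placement of the dot on either side of the cut, so to exhibit cancellation it suffices to show that they are equivalent as dotted link cobordisms. Such an equivalence is realized by sliding the dot once around the longitude of the sphere; this path projects onto a parallel push-off of $K$ in the link diagram, which crosses the other cable components a total of $(n-2)f$ times, where $f$ denotes the framing number of $K$. By Theorem \ref{thm:5:dotslide}(a), each such slide contributes a controlled sign, for a total contribution of $(-1)^{(n-2)f}$. When $n$ is even or $f$ is even this equals $+1$, the two terms become equal and cancel, and $\llbracket\mysym{08}_i\upK\rrbracket\simeq 0$ as claimed. The principal obstacle is the careful bookkeeping of signs: we must confirm the signs prescribed by the horizontal neck-cutting relation, verify that the geometric count of crossings encountered by the push-off is indeed $(n-2)f$ independent of the choice of $l_0$, and align the various sign conventions so that the predicted cancellation holds on the nose.
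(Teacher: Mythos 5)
Your overall plan (isolate the torus, cut it open with the horizontal neck-cutting relation, and cancel the two dotted terms under the parity hypothesis) is in the same spirit as the paper's argument. However, there are two concrete problems.

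First, your geometric claim that the capping disks can be taken inside the solid torus bounded by $T$ is false. You cut along $\{l_0\}\times S^1$, which is a parallel copy of $K$; in the solid torus $V=\mathfrak{F}_K(D'\times S^1)$ this curve is a \emph{longitude}, not a meridian, and longitudes of a solid torus do not bound disks inside it. Consequently the disks capping the cut must leave $V$ and, since they fill a knotted circle, their movie presentation necessarily consists of Reidemeister II retractions passing over and under the remaining strands of the cable. The resulting dotted sphere is therefore \emph{not} presented as an unknotted, unlinked sphere in the diagram, so you cannot conclude that the two dotted terms are equivalent by abstract disjointness; the whole content of the lemma lies in tracking what happens to the dot as the capping disk is retracted through the diagram. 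The paper sets up the picture so that the sphere is a ``pill'' with a ``tentacle,'' and then explicitly pulls the tentacle back through the crossings of $K$.

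Second, your sign bookkeeping misapplies Theorem \ref{thm:5:dotslide}. Part (a) of that theorem is the statement that a dot slid \emph{over} a crossing induces a homotopic chain map; it does not supply a ``controlled sign'' per crossing, and the path of your dot must also pass \emph{under} crossings of $K$. Under-crossings are covered only by part (b), which holds over rational coefficients and produces no sign formula; using it would weaken the statement to $\QQ$-coefficients, whereas the lemma is an integral one. The paper's proof handles under-crossings by a different mechanism: pulling the dotted tentacle under a single crossing of $K$ generates a multi-term expansion (twice the dot on each strand, minus the dot pulled past), and after discarding terms where the dot sits on a non-projection strand, the surviving term carries a factor $(-1)^m$ with $m\in\{n,n-2\}$. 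This is where the parity hypothesis enters, and it is not reducible to a citation of the dot-slide theorem. Your formula $(-1)^{(n-2)f}$ happens to agree with the correct answer modulo $2$, but you have not derived it; as written, the argument has a genuine gap at the under-crossing step.
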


\begin{proof}
The product of our tangle with $K$ contains a torus of the form $S^1\times K$, which can be represented by the movie in \eqref{eq:6:torusmovie}. As in our previous movies, we do not show all the strands of the cable, and the box labeled $K$ represents the $n$-cable of a $(1,1)$-tangle diagram whose closure is $K$. The movie starts with a birth followed by a sequence of Reidemeister II cobordisms knotting the circle with the remainder of the link, then a saddle connecting the circle with itself before the entire process is reversed.
\begin{equation}\label{eq:6:torusmovie}
        \includeMov{S06F007}
\end{equation}
The sequence of the two saddles chronologically in the middle of the cobordism forms a horizontal neck, which we will cut using Lemma~\ref{lem:6:horizontalNeck}.  This turns our torus $S^1\times K$ into a sphere knotted with the remainder of the cobordism.  The sphere resembles a \enquote{pill}, with a birth and death and a \enquote{tentacle} projecting from the side reaching through the link.  After the horizontal neck cutting, we have  a difference of two dotted cobordisms where in one the dot is at the end of the projection, and in the other the dot is on the body of the sphere (near the birth and death in \eqref{eq:6:torusmovie}).  We will pull this projection back through the link, one time undotted and one time dotted, so that in both cases we end up with some multiple of an unknotted dotted sphere next to our cobordism.  Throughout the computations, the dot will live on the central link diagram of the cobordisms, i.e., on the link diagram which is chronologically in the middle. To simplify computations, we will therefore only depict that dotted link diagram. 
    
As we pull the projection back, we need to pull it through vertical \enquote{curtains} that come from identity cobordisms of other strands near the crossings of $K$. Consider such a curtain, and suppose that the projection is undotted (so that the dot lies near the birth and death), or that the projection is dotted, but we already moved the dot through the curtain. To pull the projection back through the curtain, we must then move the dot up out of the way in chronological direction, which does not incur a sign as dots commute with Reidemeister II cobordisms. Then, we can simplify the cobordism by movie move 3 and the dot returns to the central link diagram. 
  
As we pull the projection back when it is not dotted, we do not incur a change of sign as we are---at most---moving a dot past a Reidemeister II cobordism, or performing a movie move 3, which---as it is in the first five moves---does not incur any change in sign.  Now we will consider what happens when we pull the dotted projection back. Dots can slide over crossings, so as we pull the projection over other strands of the link near a crossing of $K$, no sign will be incurred.

Now consider what happens as we pull the dotted projection under a crossing of $K$. Let $m$ be either $n$ or $n-2$, depending on whether or not the projection had already been pulled through that crossing the other way. Near the given crossing of $K$, we then see the local picture shown on the left-hand side of \eqref{eq:6:crossing}:

\begin{equation}\label{eq:6:crossing}
    \includeFig{S06F009}
\end{equation}

As we pull the projection under each successive strand, it leaves behind twice a term with the dot on that strand and negates the term with the dot pulled past.  This leaves us with an alternating sum of twice each term with the dot on the crossing strands, and a single term where the dot is on the projection past all of the strands.  After this point, all terms that have the dot on a strand that is not part of the projection evaluate to zero, as the projection can be freely retracted yielding an undotted sphere.  If $m=n$ three terms now remain, the term with the dot on the projection and twice the difference of terms where the dot is on the crossing strands, which are part of the projection coming back through the crossing.  The latter two terms cancel one another, as the projection can now be retracted until it reaches the crossing again, and we are left with the difference of identical terms.  In either case $m=n$ or $m=n-2$ we are left with only the term with the dot on the retracted projection multiplied by $(-1)^m$.

If $n$ is even, then $m$ is always even and pulling the projection under crossings also incurs no sign. Therefore, we can retract both the dotted projection and the undotted projection without incurring signs, yielding a difference of identical terms that evaluates to zero.

If $K$ has even framing, then there is an even number of crossings, and as the projection must pass under each crossing, the signs incurred with each undercrossing will be overall raised to an even power. This again allows us to retract the dotted projection without incurring a sign, and---as before---cancel with the term featuring the undotted projection.
\end{proof}

\subsection{The Hecke Algebra Action}

To the generator $g_i$ we make the following association

\begin{equation}
    g_i=\left\llbracket\mysym{01}_i\upK\right\rrbracket
\end{equation}

\begin{theorem}\label{thm:6:HeckeAction}
    For a natural number $n$ and a framed knot $K$, if either $n$ is even or $K$ has even framing, then the Hecke algebra $\mathcal{H}(q^2,n)$ at $q=i$ acts on the odd Khovanov homology of the $n$-cable of $K$.
\end{theorem}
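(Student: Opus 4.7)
The plan is to verify that the operators $g_i \coloneqq \llbracket\mysym{01}_i\upK\rrbracket$ satisfy the three defining relations of $\mathcal{H}(-1,n)$ acting on $\okh(K^n)$. The key ingredients are the sign-fixed skein decomposition \eqref{eq:6:skein}, the vanishing of swept tori from Lemma~\ref{lem:6:toruszero}, and the functoriality up to sign from Theorem~\ref{thr:OKHfunctor}.

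First I would verify the quadratic relation $g_i^2 = -2 g_i - 1$. Writing $g_i = -I_i + C_i$ with $I_i \coloneqq \llbracket\mysym{03}_i\upK\rrbracket$ and $C_i \coloneqq \llbracket\mysym{04}_i\upK\rrbracket$, expansion gives
\begin{equation}
    g_i^2 \;=\; I_i \;-\; 2\,C_i \;+\; C_i^2.
\end{equation}
The composition of the cap-cup tangle with itself equals the cap-cup disjoint union with a small unknotted circle, so under the sweep $\upK$ the extra circle becomes a torus of the form $S^1 \times K$ sitting alongside a copy of $C_i$. Under the hypothesis that $n$ is even or that $K$ has even framing, Lemma~\ref{lem:6:toruszero} kills this torus, so $C_i^2 \simeq 0$ and therefore $g_i^2 \simeq I_i - 2 C_i = -2(-I_i + C_i) - I_i = -2 g_i - 1$, which is exactly the Hecke relation at $q = i$.

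Next I would address the far commutation relation $g_i g_j = g_j g_i$ for $|i-j|\geq 2$. Since the crossings of $\sigma_i$ and $\sigma_j$ live in disjoint disks, their sweeps around $K$ differ by a chronological reordering of distant saddle and Reidemeister-type cobordisms; the chronological movie-move analysis of Section~4 (commuting saddles with saddles and with Reidemeister-type cobordisms) shows that no sign is introduced in this case. Finally, for the braid relation $g_i g_{i+1} g_i = g_{i+1} g_i g_{i+1}$, the two sides arise from sweeps of tangles related by the braid relation on three strands, which is an ambient isotopy of tangles. Sweeping preserves the isotopy, so Theorem~\ref{thr:OKHfunctor} yields equality up to sign, and the sign can be pinned down by expanding both sides using \eqref{eq:6:skein} and comparing the resulting polynomials in $C_i$ and $C_{i+1}$, using the already-established quadratic relation to simplify.

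The main obstacle will be consistent sign bookkeeping. Because odd Khovanov homology is only defined up to overall sign, the topological arguments a priori yield each Hecke relation only up to a sign. The sign fixing around \eqref{eq:6:skein} is calibrated to enforce the quadratic relation exactly; the remaining task is to confirm that the braid and far commutation relations then hold with the same sign conventions, rather than their negatives. I expect this to be the most delicate part of the argument and to require tracking the chronological-change signs from Section~4 through the swept cobordisms carefully. Once this is done, the assignment $g_i \mapsto \llbracket\mysym{01}_i\upK\rrbracket$ descends to a well-defined action of $\mathcal{H}(q^2,n)$ at $q = i$ on $\okh(K^n)$.
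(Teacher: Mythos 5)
Your argument for the quadratic relation $g_i^2 = -2g_i - 1$ matches the paper's: expand with $g_i = -I_i + C_i$, observe that $C_i^2$ produces the swept torus $S^1\times K$, and kill it with Lemma~\ref{lem:6:toruszero}. That part is fine.

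For the far commutation relation $g_ig_j = g_jg_i$ (for $|i-j|\geq 2$), there is a genuine gap. You claim the chronological-movie-move analysis of Section~4 ``shows that no sign is introduced,'' but that analysis only establishes that changing the chronological order of distant Reidemeister-type cobordisms produces chain maps homotopic \emph{up to a possible overall sign} (this is precisely the content of the Worm Lemma argument). Each swept $\sigma_i$ is a long composite of Reidemeister II and III cobordisms, and exchanging the chronological order of two such composites reorders many pairs of Reidemeister cobordisms, each of which can contribute a sign; the analysis gives no control over the total. The paper sidesteps exactly this problem: it expands both sides using \eqref{eq:6:skein}, observes that the identity term appears with the same sign on each side, and then applies the Jordan--Chevalley decomposition (identity term is the semisimple part, the cap-cup terms are nilpotent by Lemma~\ref{lem:6:toruszero}) to conclude the overall sign must be $+$. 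Without an argument like this, your ``no sign is introduced'' assertion is unjustified.

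Your sketch for the Yang--Baxter relation is also thinner than what is actually needed. You propose to ``expand both sides using \eqref{eq:6:skein} and compare the resulting polynomials in $C_i$ and $C_{i+1}$,'' but the adjacent cap-cup cobordisms do not generate a free algebra: they satisfy nontrivial relations (and here Jordan--Chevalley does not apply directly, since $C_i$ and $C_{i+1}$ do not commute, so their products need not be nilpotent a priori). The paper handles this by pre- and postcomposing with chain maps induced by caps and cups to reduce to a tangle swept on $n-2$ strands, showing that each side of the reduced relation equals $\pm\operatorname{id}$, and deriving along the way the auxiliary relation between the two Temperley--Lieb-type diagrams on three strands. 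That derived relation is what makes the polynomial comparison valid, and it is missing from your outline.
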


\begin{proof}
The Hecke algebra relations in our setting correspond with the following three relations

\vspace{1em}
.

\begin{itemize}
    \item $\left\llbracket\mysym{05}_i\upK\right\rrbracket\simeq-2\,\left\llbracket\mysym{33}_i\upK\right\rrbracket-\left\llbracket\mysym{09}_i\upK\right\rrbracket$
    \item $\left\llbracket\mysym{13}_i\upK\right\rrbracket \simeq \left\llbracket\mysym{14}_i\upK\right\rrbracket$
    \item $\left\llbracket\left[\mysym{11}_i\dots\mysym{12}_j\right]\upK\right\rrbracket \simeq\left\llbracket\left[\mysym{12}_i\dots\mysym{11}_j\right]\upK\right\rrbracket$
\end{itemize}
\vspace{1em}

We will now prove these relations one after the other.

\paragraph{Hecke Relation}

\begin{equation}
    \begin{split}
        \left\llbracket\mysym{05}_i\upK\right\rrbracket
        &= \left\llbracket\mysym{09}_i\upK\right\rrbracket-\left\llbracket\mysym{06}_i\upK\right\rrbracket-\left\llbracket\mysym{07}_i\upK\right\rrbracket+\left\llbracket\mysym{08}_i\upK\right\rrbracket\\
        &\simeq \left\llbracket\mysym{03}_i\upK\right\rrbracket-2\,\left\llbracket\mysym{04}_i\upK\right\rrbracket\\
        &= \left\llbracket\mysym{03}_i\upK\right\rrbracket-2\,\left\llbracket\mysym{04}_i\upK\right\rrbracket +\left\llbracket\mysym{03}_i\upK\right\rrbracket-\left\llbracket\mysym{03}_i\upK\right\rrbracket\\
        &= -2\left(-\left\llbracket\mysym{03}_i\upK\right\rrbracket+\left\llbracket\mysym{04}_i\upK\right\rrbracket\right)-\left\llbracket\mysym{03}_i\upK\right\rrbracket\\
        &= -2\,\left\llbracket\mysym{01}_i\upK\right\rrbracket-\left\llbracket\mysym{03}_i\upK\right\rrbracket
    \end{split}
\end{equation}

\paragraph{Cubic Relation (Yang-Baxter Relation or Reidemeister III Relation)}\phantom{.}

{\noindent}As the two cobordisms in equation \eqref{eq:6:YB1} are ambient isotopic, the functoriality of the odd Khovanov bracket implies that the associated chain maps are homotopic up to sign:

\begin{equation}\label{eq:6:YB1}
    \left\llbracket\mysym{13}_i\upK\right\rrbracket  \simeq\pm \left\llbracket\mysym{14}_i\upK\right\rrbracket
\end{equation}

To see that the sign on the right-hand side is a plus, we pre- and postcompose both sides of this relation with chain maps induced by cup and cap tangles. This yields \eqref{eq:6:YB2}, where the sign is the same as in \eqref{eq:6:YB1}:

\begin{equation}\label{eq:6:YB2}
    \left\llbracket\mysym{15}_i\upKnmt\right\rrbracket  \simeq\pm \left\llbracket\mysym{16}_i\upKnmt\right\rrbracket
\end{equation}

Note that while we arbitrarily fix the signs of the chain maps induced by the cap and the cup at the top and the bottom, we use the same signs (and the same chain maps) throughout this proof. For cap-cup tangles and crossing tangles in that occur in the center of tangle diagrams, we fix the signs as described earlier, and we assume that equation \eqref{eq:6:skein} holds on the nose. Moreover, we assume that the identity tangle in this equation is assigned the identity chain map, not just up sign and homotopy.

With this in mind, we can simplify the left-hand side of \eqref{eq:6:YB2} as follows:

\begin{equation}\label{eq:6:YB3}
    \begin{split}
         \left\llbracket\mysym{15}_i\upKnmt\right\rrbracket 
         &= -\left\llbracket\mysym{17}_i\upKnmt\right\rrbracket
         +\left\llbracket\mysym{18}_i\upKnmt\right\rrbracket
         \simeq-\left\llbracket\mysym{17}_i\upKnmt\right\rrbracket \\
         &= \left\llbracket\mysym{19}_i\upKnmt\right\rrbracket
         -\left\llbracket\mysym{20}_i\upKnmt\right\rrbracket
         \simeq\left\llbracket\mysym{19}_i\upKnmt\right\rrbracket \\
         &=-\left\llbracket\mysym{21}_i\upKnmt\right\rrbracket
         +\left\llbracket\mysym{22}_i\upKnmt\right\rrbracket
         \simeq\left\llbracket\mysym{22}_i\upKnmt\right\rrbracket\\
         &\simeq\pm\left\llbracket\mysym{23}_i\upKnmt\right\rrbracket \\
    \end{split}
\end{equation}

Now consider the term on the right-hand side of \eqref{eq:6:YB2}:

\begin{equation}
    \left\llbracket\mysym{16}_i\upKnmt\right\rrbracket = -\left\llbracket\mysym{24}_i\upKnmt\right\rrbracket+\left\llbracket\mysym{25}_i\upKnmt\right\rrbracket
\end{equation}

\begin{equation}
    \begin{split}
         \left\llbracket\mysym{24}_i\upKnmt\right\rrbracket
         &= -\left\llbracket\mysym{26}_i\upKnmt\right\rrbracket
         +\left\llbracket\mysym{27}_i\upKnmt\right\rrbracket
         \simeq-\left\llbracket\mysym{26}_i\upKnmt\right\rrbracket \\
         &= \left\llbracket\mysym{21}_i\upKnmt\right\rrbracket
         -\left\llbracket\mysym{28}_i\upKnmt\right\rrbracket
         \simeq-\left\llbracket\mysym{22}_i\upKnmt\right\rrbracket
    \end{split}
\end{equation}

\begin{equation}
    \begin{split}
         \left\llbracket\mysym{25}_i\upKnmt \right\rrbracket
         &= \left\llbracket\mysym{29}_i\upKnmt\right\rrbracket
         -\left\llbracket\mysym{30}_i\upKnmt\right\rrbracket\\
         &-\left\llbracket\mysym{31}_i\upKnmt\right\rrbracket
         + \left\llbracket\mysym{32}_i\upKnmt\right\rrbracket\\
         &\simeq\left\llbracket\mysym{22}_i\upKnmt\right\rrbracket
         +\left\llbracket\mysym{32}_i\upKnmt\right\rrbracket
    \end{split}
\end{equation}

Overall, for the right-hand tangle we arrive at equation \eqref{eq:6:YB4}.

\begin{equation}\label{eq:6:YB4}
    \left\llbracket\mysym{16}_i\upKnmt\right\rrbracket \simeq 2\,\,\left\llbracket\mysym{22}_i\upKnmt\right\rrbracket+\left\llbracket\mysym{32}_i\upKnmt\right\rrbracket
\end{equation}

From equations \eqref{eq:6:YB2} and \eqref{eq:6:YB3}, the left-hand tangle must also be equal to plus or minus identity.  As the only multiples of identity, which are equal to plus or minus identity, are plus or minus identity themselves, the equation \eqref{eq:6:YB6} follows.

\begin{equation}\label{eq:6:YB6}
    \left\llbracket\mysym{22}_i\upKnmt\right\rrbracket = -\left\llbracket\mysym{32}_i\upKnmt\right\rrbracket
\end{equation}

It then follows that the left-hand and right-hand tangles are homotopic.

\begin{equation}
    \left\llbracket\mysym{15}_i\upKnmt\right\rrbracket \simeq \left\llbracket\mysym{22}_i\upKnmt\right\rrbracket \simeq \left\llbracket\mysym{16}_i\upKnmt\right\rrbracket
\end{equation}

This shows that the overall signs were equal on the original tangles related to the cubic relation.

\begin{equation}
    \left\llbracket\mysym{13}_i\upK\right\rrbracket \simeq \left\llbracket\mysym{14}_i\upK\right\rrbracket
\end{equation}

\paragraph{Braid Commutativity Relation}\phantom{.}

{\noindent}Let $i$ and $j$ be the indices of distant saddles. Without loss of generality let $i<j-1$.  As the two cobordisms in equation \eqref{eq:6:BC1} are ambient isotopic, the functoriality of odd Khovanov bracket implies that the associated chain maps are equal up to sign and homotopy:

\begin{equation}\label{eq:6:BC1}
    \left\llbracket\left[\mysym{11}_i\dots\mysym{12}_j\right]\upK\right\rrbracket \simeq\pm \left\llbracket\left[\mysym{12}_i\dots\mysym{11}_j\right]\upK\right\rrbracket
\end{equation}

If we decompose each side we arrive at the following.

\begin{equation}
    \begin{split}
        \left\llbracket\left[\mysym{11}_i\dots\mysym{12}_j\right]\upK\right\rrbracket &= 
        \left\llbracket\left[\mysym{09}_i\dots\mysym{09}_j\right]\upK\right\rrbracket
        -\left\llbracket\left[\mysym{07}_i\dots\mysym{09}_j\right]\upK\right\rrbracket\\
        &-\left\llbracket\left[\mysym{09}_i\dots\mysym{06}_j\right]\upK\right\rrbracket
        +\left\llbracket\left[\mysym{07}_i\dots\mysym{06}_j\right]\upK\right\rrbracket\\
        \left\llbracket\left[\mysym{12}_i\dots\mysym{11}_j\right]\upK\right\rrbracket &= 
        \left\llbracket\left[\mysym{09}_i\dots\mysym{09}_j\right]\upK\right\rrbracket
        -\left\llbracket\left[\mysym{06}_i\dots\mysym{09}_j\right]\upK\right\rrbracket\\
        &-\left\llbracket\left[\mysym{09}_i\dots\mysym{07}_j\right]\upK\right\rrbracket
        +\left\llbracket\left[\mysym{06}_i\dots\mysym{07}_j\right]\upK\right\rrbracket
    \end{split}
\end{equation}

Both terms have an identity piece and three components involving cap-cup terms.  Again, we can refer to the Jordan-Chevalley decomposition. The semisimple part is the identity component, and the nilpotent part is everything else.  Therefore, as the identity term shows up in both cobordisms without a sign, the overall cobordisms must be equal and not negatives of each other.
\end{proof}

\begin{appendices}

\newpage

\singlespacing

\section{Commutativity \& Associativity Relations} \label{apdx:asscomm}
\vspace{1em}

\paragraph{Canonical Cobordisms}\phantom{.}\bigskip

\begin{equation}
    \includeFig{S0AF039}
\end{equation}

\paragraph{Orientation Reversal Relations}\phantom{.}\bigskip

{\noindent}\begin{minipage}{0.5\textwidth}
    \begin{equation}
        \includeFig{S0AF040}
    \end{equation}
\end{minipage}%
\begin{minipage}{0.5\textwidth}
    \begin{equation}
        \includeFig{S0AF041}
    \end{equation}
\end{minipage}
\vspace{1em}

\paragraph{Associativity and Frobenius Relations}\phantom{.}\bigskip

{\noindent}\begin{minipage}{0.5\textwidth}
    \begin{equation}
        \includeFig{S0AF034}
    \end{equation}
\end{minipage}%
\begin{minipage}{0.5\textwidth}
    \begin{equation}
        \includeFig{S0AF035}
    \end{equation}
\end{minipage}
\vspace{1em}

\begin{equation}
    \includeFig{S0AF033}
\end{equation}

\paragraph{Commutativity Relations}\phantom{.}\bigskip

{\noindent}\begin{minipage}{0.5\textwidth}
    \begin{equation}
        \includeFig{S0AF023}
    \end{equation}
\end{minipage}%
\begin{minipage}{0.5\textwidth}
    \begin{equation}
        \includeFig{S0AF024}
    \end{equation}
\end{minipage}
\vspace{1em}

{\noindent}\begin{minipage}{0.5\textwidth}
    \begin{equation}
        \includeFig{S0AF025}
    \end{equation}
\end{minipage}%
\begin{minipage}{0.5\textwidth}
    \begin{equation}
        \includeFig{S0AF026}
    \end{equation}
\end{minipage}
\vspace{1em}

{\noindent}\begin{minipage}{0.5\textwidth}
    \begin{equation}
        \includeFig{S0AF027}
    \end{equation}
\end{minipage}%
\begin{minipage}{0.5\textwidth}
    \begin{equation}
        \includeFig{S0AF028}
    \end{equation}
\end{minipage}
\vspace{1em}

{\noindent}\begin{minipage}{0.5\textwidth}
    \begin{equation}
        \includeFig{S0AF029}
    \end{equation}
\end{minipage}%
\begin{minipage}{0.5\textwidth}
    \begin{equation}
        \includeFig{S0AF030}
    \end{equation}
\end{minipage}
\vspace{1em}

{\noindent}\begin{minipage}{0.5\textwidth}
    \begin{equation}
        \includeFig{S0AF031}
    \end{equation}
\end{minipage}%
\begin{minipage}{0.5\textwidth}
    \begin{equation}
        \includeFig{S0AF032}
    \end{equation}
\end{minipage}
\vspace{1em}

\paragraph{Cross and Diamond Relations}\phantom{.}\bigskip

{\noindent}\begin{minipage}{0.5\textwidth}
    \begin{equation}
        \includeFig{S0AF036}
    \end{equation}
\end{minipage}%
\begin{minipage}{0.5\textwidth}
    \begin{equation}
        \includeFig{S0AF037}
    \end{equation}
\end{minipage}
\vspace{1em}

\begin{equation}
    \includeFig{S0AF038}
\end{equation}

\newpage

\paragraph{Commutativity of Faces in the Odd Khovanov Cube}\phantom{.}\bigskip

{\noindent}All the relations above which do not feature deaths or births can appear in a face of the odd Khovanov cube.  Faces in the odd Khovanov cube correspond with two-crossing link diagrams that either commute or anticommute.  This section reinterprets the same relations above as odd Khovanov cubes of two crossing links.  In the following table, all planar circles are omitted.  If a crossing's orientation is not stipulated either choice can be made.  Diagrams are considered up to planar isotopy.  Tangle diagrams can be closed with any crossingless tangle that connects endpoints that have opposite decorations.
\vfill
\begin{center}
\begin{tabular}{ C{1.5in} c }
\begin{tabular}{c}Commuting\\$\sigma_{i,j} = 1$\end{tabular} &
\begingroup
\renewcommand{\arraystretch}{3.25}
\begin{tabular}{ C{0.75in} C{3in} }
type i & \includeTang{S0AF001} \\
type ii & \includeTang{S0AF002} \\
type iii & \includeTang{S0AF003} or \includeTang{S0AF004}\\
type iv & \includeTang{S0AF005} \\
type v & \includeTang{S0AF006} \\
type vi & \includeTang{S0AF007} or \includeTang{S0AF008}
\end{tabular}
\endgroup
\end{tabular}
\vfill
\vspace{1em}
\rule[0pt]{\textwidth}{2pt}
\vfill
\begin{tabular}{ C{1.5in} c }
\begin{tabular}{c}Anticommuting\\$\sigma_{i,j} = -1$\end{tabular} &
\begingroup
\renewcommand{\arraystretch}{3.25}
\begin{tabular}{ C{0.75in} C{3in} }
type vii & \includeTang{S0AF009} \\
type viii & \includeTang{S0AF010} \\
type ix & \includeTang{S0AF011} or \includeTang{S0AF012}\\
type x & \includeTang{S0AF013} or \includeTang{S0AF014}
\end{tabular}
\endgroup
\end{tabular}
\end{center}
\vfill

\section{Four-Tube Relation Variants} \label{apdx:4tuvariants}

In this section, we work out a few results of the (4Tu) relation which are used in the arguments on the invariance of the odd Khovanov functor under movie moves 12 and 13.
\begin{itemize}
    \item The first result is shown in equation \eqref{eq:A:11} with supporting computations shown in equations \eqref{eq:A:12}-\eqref{eq:A:13}.
    \item The second result is shown in equation \eqref{eq:A:21} with supporting computations shown in equations \eqref{eq:A:22}-\eqref{eq:A:23}.
    \item The third result is shown in equation \eqref{eq:A:31} with supporting computations shown in equations \eqref{eq:A:32}-\eqref{eq:A:33}.
\end{itemize}
\vfill
\begin{equation}
\label{eq:A:11}
\includeCobEq{S0BF010}
\end{equation}
\vfill
\begin{equation}
\label{eq:A:21}
\includeCobEq{S0BF020}
\end{equation}
\vfill
\begin{equation}
\label{eq:A:31}
\includeCobEq{S0BF030}
\end{equation}
\newpage
\begin{equation}
\label{eq:A:12}
\includeCobEq{S0BF011}
\end{equation}
\vfill
\begin{equation}
\includeCobEq{S0BF012}
\end{equation}
\vfill
\begin{equation}
\label{eq:A:13}
\includeCobEq{S0BF013}
\end{equation}
\vfill
\begin{equation}
\label{eq:A:22}
\includeCobEq{S0BF021}
\end{equation}
\vfill
\begin{equation}
\includeCobEq{S0BF022}
\end{equation}
\vfill
\begin{equation}
\label{eq:A:23}
\includeCobEq{S0BF023}
\end{equation}
\newpage
\hspace{0pt}
\vfill
\begin{equation}
\label{eq:A:32}
\includeCobEq{S0BF031}
\end{equation}
\vfill
\begin{equation}
\includeCobEq{S0BF032}
\end{equation}
\vfill
\begin{equation}
\includeCobEq{S0BF033}
\end{equation}
\vfill
\begin{equation}
\label{eq:A:33}
\includeCobEq{S0BF034}
\end{equation}
\vfill
\end{appendices}
\newpage

\addcontentsline{toc}{section}{References}
\printbibliography

\ifthesis
\newpage
\section*{Vita}
\addcontentsline{toc}{section}{Vita}
\onehalfspacing
\input{CV.tex}
\fi

\end{document}